\documentclass[10pt,a4paper]{amsart}

\usepackage{amsfonts}
\usepackage{amsthm}
\usepackage{amssymb}
\usepackage{latexsym}
\usepackage{amsmath}
\usepackage{setspace}
\usepackage{amscd}
\usepackage{verbatim}
\usepackage{graphicx}
\usepackage{graphics}
\usepackage[matrix,arrow]{xy}
\usepackage[active]{srcltx}
\usepackage{mathrsfs}
\usepackage{overpic}
\usepackage{tikz}

\usepackage
[hypertexnames=false,backref=page,pdfpagemode=UseNone,breaklinks=true,extension=pdf,colorlinks=true,linkcolor=blue,
urlcolor=blue]{hyperref}

\title[]{On the Finiteness of Anosov flows on $3$-manifolds via Contact Geometry}
\dedicatory{To Yasha Eliashberg, on the occasion of his 80th 
birthday, whose
ideas and energy have paved our way.}

\author{Jonathan Bowden}
\address{J. Bowden, Leibniz University Hannover, Welfengarten 1, D-30167 Hannover, Germany}
\email{jonathan.bowden@math.uni-hannover.de}

\author{Vincent Colin}
\address{V. Colin, Nantes Universit\'e, CNRS, Laboratoire de Math\'ematiques Jean Leray, LMJL, F-44000 Nantes, France}
\email{vincent.colin@univ-nantes.fr}

\date{\today}

\newcommand{\R}{\mathbb{R}}
\newcommand{\Z}{\mathbb{Z}}
\newcommand{\N}{\mathbb{N}}

\theoremstyle{plain}
\newtheorem{theorem}{\sc Theorem}[section]

\newtheorem{lemma}[theorem]{\sc Lemma}
\newtheorem{corollary}[theorem]{\sc Corollary}
\newtheorem{conjecture}[theorem]{\sc Conjecture}

\theoremstyle{definition}
\newtheorem{definition}[theorem]{\sc Definition}

\theoremstyle{remark}
\newtheorem{remark}[theorem]{\sc Remark}

\theoremstyle{question}
\newtheorem{question}[theorem]{\sc Question}

\theoremstyle{problem}
\newtheorem{problem}[theorem]{\sc Problem}

\theoremstyle{example}
\newtheorem{example}[theorem]{\sc Example}

\begin{document}

\maketitle

\begin{abstract} 
Following Eliashberg-Thurston and Mitsumatsu, one can associate a transverse pair of oppositely oriented contact structures to any Anosov flow. We show that the isotopy classes of these contact structures completely determine the flow up to isotopy orbit equivalence. This then implies that the number of Anosov flows modulo isotopy orbit equivalence on a closed hyperbolic $3$-manifold is finite. This approach also yields an explicit bound on the number of orbit equivalence classes of Anosov flows in terms of the number of universally tight contact structures. In addition, we show finiteness of pseudo-Anosov flows for which the complement of the singular orbits is atoroidal and, in the non-hyperbolic case, we obtain a control on the dynamics of Anosov flows on the hyperbolic pieces of the JSJ-decomposition.

\end{abstract}

\vspace{.2cm}



\section{Introduction}

In this article, we prove the following:

\begin{theorem}[Finiteness Conjecture: Hyperbolic Case] \label{thm: finiteness} A closed hyperbolic $3$-manifold carries only finitely many Anosov flows up to isotopy orbit equivalence. 
\end{theorem}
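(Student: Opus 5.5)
The plan follows the strategy announced in the abstract. First, to each Anosov flow $\phi$ we attach the Eliashberg--Thurston/Mitsumatsu bi-contact structure $(\xi_+,\xi_-)$: a positive and a negative contact structure, transverse to each other, whose intersection line field spans $\phi$. It is obtained by rotating the weak stable and unstable foliations slightly toward each other, so it is well defined up to isotopy. The first step is to show that $\xi_\pm$ are \emph{universally tight}. For this we lift to the universal cover and use that the lifted weak foliations are $\mathbb{R}$-covered-type taut foliations without Reeb components; a perturbation of a taut foliation is universally tight by Eliashberg--Thurston, and we check that the perturbation is compatible with passing to covers.

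The main step is a rigidity theorem: if two Anosov flows $\phi_0,\phi_1$ have bi-contact pairs with $\xi_+^0$ isotopic to $\xi_+^1$ and $\xi_-^0$ isotopic to $\xi_-^1$, then $\phi_0$ and $\phi_1$ are isotopically orbit equivalent. I would argue as follows.
\begin{enumerate}
\item After an isotopy, assume $\xi_+^0=\xi_+^1=\xi_+$, and that $\xi_-^1$ is isotopic to $\xi_-^0$ through negative contact structures.
\item Use that every Anosov flow is a \emph{Reeb-like} flow for a suitable contact form of $\xi_+$ (Hozoori/Mitsumatsu characterisation): the flow is transverse to $\xi_+$ up to homotopy within a transverse cone. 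The space of contact structures transverse to a fixed $\xi_+$ and negative should be related to Anosov flows by structural stability.
\item Connect the two negative structures by a path $\xi_-^t$. The hard part is to arrange that $\xi_-^t$ stays transverse to $\xi_+$ throughout. Then $\xi_+\cap\xi_-^t$ is a path of flows, and one must show each is Anosov, so that structural stability gives orbit equivalences along the path.
\end{enumerate}

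I expect item 3 to be the main obstacle. Transversality can fail, and a bi-contact pair does not by itself guarantee the Anosov property: that requires a dominated splitting, or the existence of a defining form with $\alpha_+\wedge d\alpha_+$ and Reeb conditions. My first attempt would use the orbit space and the leaf spaces of the weak foliations of the lifted flows. I would show that the isotopy class of $\xi_\pm$ determines the set of free homotopy classes of periodic orbits, via contact homology or Legendrian/transverse invariants. Knowing the periodic orbits, I would then recover the flow up to isotopy orbit equivalence using Barthelm\'e--Fenley--Frankel--Potrie-type results, which state that flows on hyperbolic manifolds are determined by their free homotopy data. The concrete invariant I would try is that $\phi$ is Reeb for a form of $\xi_+$ that is nondegenerate with no contractible orbits. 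Cylindrical contact homology of $\xi_+$ is then generated by the closed orbits of $\phi$, with the grading controlled by the hyperbolicity, so the set of free homotopy classes of periodic orbits of $\phi$ is an invariant of the contact isotopy class.

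To conclude, combine the steps. By Colin--Giroux--Honda, a closed atoroidal (in particular hyperbolic) $3$-manifold carries only finitely many tight contact structures up to isotopy, so in particular finitely many universally tight ones, of either orientation. The map $\phi\mapsto([\xi_+],[\xi_-])$ takes values in a finite set and, by the rigidity step, is injective on isotopy orbit equivalence classes. This proves the theorem, with a bound given by the square of the number of universally tight contact structures.
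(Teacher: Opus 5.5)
\textbf{There is a genuine gap in the central step.} Your overall architecture matches the paper's: universal tightness of $\xi_\pm$, then a rigidity statement ``the isotopy classes of $\xi_\pm$ determine the unoriented free homotopy classes of periodic orbits'', then Barthelm\'e--Frankel--Mann (the relevant result, rather than a BFFP-type one), then Colin--Giroux--Honda. Your final counting paragraph is fine. The problem is that you never give a mechanism that actually reads the periodic orbit data off the isotopy classes of $\xi_\pm$.

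You already flag the path approach in items 1--3 as unresolved, and it does not close as stated. A transverse pair only yields a projectively Anosov flow, and producing a transverse path between two given pairs is essentially the rigidity statement itself.

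The contact homology route rests on a false premise. The Anosov vector field spans $\xi_+\cap\xi_-$, so it is \emph{Legendrian} for both structures. It is never transverse to $\xi_+$, let alone the Reeb field of a form defining $\xi_+$. Hozoori-type characterisations concern the Reeb fields of $\alpha_\pm$. These are transverse to $X$, and their closed orbits have no a priori relation to those of $X$. Anosov flows that are (orbit equivalent to) Reeb flows are skew $\mathbb{R}$-covered, which is exactly the case Barthelm\'e--Bowden--Mann already handled with contact homology. The theorem, however, concerns all Anosov flows, including non-$\mathbb{R}$-covered ones, and for those the cylindrical contact homology of $\xi_+$ does not see the orbits of $X$. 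A side remark: the lifted weak foliations are not $\mathbb{R}$-covered in general, although this does not affect the tightness step.

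\textbf{The missing idea} is a purely $3$-dimensional invariant. For $\gamma\in\pi_1(M)$, pass to $M^\gamma=\widetilde{M}/\langle\gamma\rangle\cong S^1\times\mathbb{R}^2$, exhaust it by solid tori with convex boundary, and take the limit $Sl_{\xi_\pm}(\gamma)$ of the slopes of the dividing curves. By tightness the slopes are monotone, so the limit exists, and each depends only on the isotopy class of $\xi_+$, respectively $\xi_-$, separately.
\begin{itemize}
\item If $\gamma$ is periodic, the lift of the unstable leaf of the orbit is a cylinder whose characteristic foliation for both $\xi_\pm^\gamma$ is given by the flow. This forces the dividing curves of both structures to be parallel to the orbit on a suitable exhaustion, so the two slopes are equal and constant at infinity.
\item If $\gamma$ is not periodic, the trichotomy for the action on the bifoliated plane makes $\mathcal{P}/\langle\gamma\rangle$ an annulus, since there are no scalloped regions on a hyperbolic $M$. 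If the rotation number of $\mathcal{F}^s$ along its core is zero, compact pieces embed in a skew bifoliated annulus. If it is $k\neq 0$, they embed in a model made of $2|k|$ Reeb-like components. In either model the slopes of $\xi_+$ and $\xi_-$ are separated, so they cannot be equal and constant at infinity.
\end{itemize}
This is the periodicity test your contact homology argument was meant to supply, and it works for every Anosov flow.
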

Our argument provides a bound for the number of isotopy orbit equivalence classes, given by the sum, over homotopy classes of plane fields $\mathcal{H}$, of products of the numbers of isotopy classes of universally tight positive and negative contact structures in $\mathcal{H}$. 
The number of universally tight contact structures up to isotopy on any closed hyperbolic $3$-manifold is known to be finite since the work of Colin-Giroux-Honda \cite{CGH},  and it can in principle be explicitly bounded by combinatorial data extracted from a triangulation of $M$, following \cite{CGH}. There is however no universal bound on the number of Anosov flows, since by a recent work of Bowden and Mann \cite{BoM}, see also B\'eguin-Yu \cite{BeY}, for every $n\in \N$, there exists a closed  hyperbolic $3$-manifold that has at least $n$ non orbit equivalent Anosov flows.

Our result takes place in a long standing effort to understand Anosov flows, which are the paradigm of hyperbolic dynamics, starting with works of Anosov, Fried, Ghys, Barbot, Fenley, Mann, Barthelm\'e, Dehornoy...
In particular, a similar finiteness result was known on Seifert fibered spaces \cite{Gh, Ts}.
The approach to this problem has recently been renewed by considering its interactions with contact geometry, as initiated by the work of Mitsumatsu \cite{Mit}, Eliashberg-Thurston \cite{ETh} and Marty \cite{Ma}. Exploiting these new techniques, finiteness results have been obtained by Barthlem\'e-Bowden-Mann for {\it Reeb} Anosov flows \cite{BBM} or more recently by Barthelm\'e-Tsang-Zung for pseudo-Anosov flows without {\it perfect fits} on closed $3$-manifolds \cite{BaTZ} or with {\it skew pieces} by Barthelm\'e-Paulet \cite{BaP}.

These contributions involve highly sophisticated contact homology techniques and the comparison of Anosov and Reeb dynamics. In the present work, we at the same time get rid of all hypothesis on the Anosov flows and simplify the proofs by only using classical $3$-dimensional contact topology, and in particular neither pseudo-holomorphic curves nor Reeb dynamics.
Our techniques moreover complete the work of Bowden-Massoni \cite{BoM} and conduct to a classification of isotopy orbit equivalence classes of Anosov flows in contact geometry terms.

\begin{theorem}\label{theorem: classification} On a closed hyperbolic $3$-manifold $M$, the following are equivalent:
\begin{enumerate} 
    \item[(a)] two Anosov flows $X$ and $X'$ are isotopy orbit equivalent;
    \item[(b)] the associated contact structures $\xi_+$, $\xi_+'$ and $\xi_-$, $\xi_-'$ are pairwise isotopic;
    \item[(c)]  the contact pairs $(\xi_+,\xi_-)$ and $(\xi'_+,\xi'_-)$ are deformation equivalent among transverse contact pairs;
    \item[(d)] $X$ and $X'$ are homotopic through projectively Anosov flows.
\end{enumerate}
\end{theorem}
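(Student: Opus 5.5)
The plan is to prove the cycle of implications (a) $\Rightarrow$ (d) $\Rightarrow$ (c) $\Rightarrow$ (b) $\Rightarrow$ (a). Only the last implication should carry real content.

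(a) $\Rightarrow$ (d). An isotopy orbit equivalence is a homeomorphism $h$ isotopic to the identity with $h_*$ sending orbits of $X$ to orbits of $X'$. I would first use structural stability of Anosov flows and smoothing to replace $h$ by a diffeomorphism $\phi$ isotopic to the identity, with $\phi_*X$ a positive reparametrization of $X'$. The isotopy $\phi_t$ then gives a path $(\phi_t)_*X$ of Anosov flows, hence of projectively Anosov flows. Finally, a linear interpolation between $\phi_*X$ and $X'$ (both span the same line field) remains projectively Anosov, since reparametrization preserves the dominated splitting.

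(d) $\Rightarrow$ (c). By Mitsumatsu and Eliashberg--Thurston, a projectively Anosov flow is the same as a flow lying in the intersection of a transverse pair of positive and negative contact structures. The construction of $(\xi_+,\xi_-)$ from the weak invariant bundles, via the bisectors of the splitting, depends continuously on the flow. A path of projectively Anosov flows therefore yields a path of transverse contact pairs, provided the pairs attached to $X$ and $X'$ are the ones built from the splittings; the choice at the endpoints is contractible.

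(c) $\Rightarrow$ (b). This follows from Gray stability applied separately to each component of the path.

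(b) $\Rightarrow$ (a). This is the main step. Let $\psi_\pm$ be the isotopies provided by Gray stability, so that $\xi_+'=\psi_{+}\xi_+$ and $\xi_-'=\psi_-\xi_-$. Because $\psi_+\neq\psi_-$ in general, we cannot simply transport the pair. My plan is to recover the flow up to isotopy orbit equivalence from each contact structure separately, using the weak stable and unstable foliations. First, the positive contact structure $\xi_+$ should determine the isotopy class of the weak stable foliation $\mathcal F^s$: $\xi_+$ is a perturbation of $\mathcal F^s$ (and $\xi_-$ of $\mathcal F^u$), so one would use the Bowden/Vogel-type uniqueness of taut foliations approximated by a given universally tight contact structure on hyperbolic manifolds, refined by Bowden--Massoni. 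Second, by a Barbot--Fenley type result, the pair of isotopy classes of the weak stable and unstable foliations determines the Anosov flow up to isotopy orbit equivalence. The orbits are recovered as intersections of leaves in the universal cover, and the leaf spaces are identified $\pi_1$-equivariantly. The equivariant identification then produces a homeomorphism, and hyperbolicity gives trivial outer automorphisms in the relevant sense, so this homeomorphism is isotopic to the identity.

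I expect the hardest part to be the first of these steps, showing that $\xi_+$ alone determines $\mathcal F^s$ up to isotopy. What I would try first is to take the contact structure deformation from $\xi_+$ to $\xi_+'$ and show it induces, in the universal cover, an equivariant correspondence between the leaf spaces of $\widetilde{\mathcal F}^s$ and $\widetilde{\mathcal F}'^s$. To do this, I would characterize the leaves through a contact-topological invariant that is preserved under isotopy of $\xi_+$, such as the transverse/Legendrian data of closed orbits, or the property of being Legendrian-convex. I would then appeal to the fact that the free homotopy classes of periodic orbits, viewed as classes represented by closed $\xi_+$-transverse curves, determine the flow up to isotopy orbit equivalence in the hyperbolic setting.
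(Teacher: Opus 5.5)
Your implications (c)$\Rightarrow$(b) via Gray stability and (d)$\Rightarrow$(c) via Eliashberg--Thurston/Mitsumatsu are fine. Both substantive steps, however, have gaps.

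In (a)$\Rightarrow$(d) you cannot smooth a topological orbit equivalence $h$ into a diffeomorphism $\phi$ with $\phi_*X$ a reparametrization of $X'$. In general no $C^1$ orbit equivalence exists. For geodesic flows of two non-isometric hyperbolic surfaces, one would force equal marked length spectra. Structural stability only concerns $C^1$-nearby flows. What you need instead is Bowden--Massoni's Theorem~D, which the paper uses for (a)$\Rightarrow$(c): $h$ is isotopic to a diffeomorphism $\widetilde h$ with $\widetilde h_*(\xi_\pm)$ homotopic to $\xi'_\pm$ through bi-contact structures, even though $\widetilde h$ is not an orbit equivalence.

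The real gap is (b)$\Rightarrow$(a). No result says that a universally tight contact structure determines an approximating taut foliation up to isotopy. Vogel's uniqueness goes the other way, and the converse is false. On a fibered hyperbolic manifold with $b_1\geq 2$, the contact approximations of infinitely many pairwise non-isotopic fibrations fall into finitely many isotopy classes, by Colin--Giroux--Honda. So ``$\xi_+$ determines $\mathcal F^s$'' would be an Anosov-specific statement essentially as hard as the theorem itself.

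Your fallback does not supply it either. Periodic orbits are Legendrian for both $\xi_\pm$, not transverse. Moreover, every free homotopy class has both transverse and Legendrian representatives, so such representability carries no information.

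The missing idea is an isotopy invariant of each $\xi_\pm$ separately that detects periodicity when the two are compared. For a class $\gamma$, pass to $M^\gamma=\widetilde M/\langle\gamma\rangle\cong S^1\times\R^2$ and exhaust it by convex solid tori. Then take the limiting slope of the dividing sets of $\xi_+^\gamma$ and of $\xi_-^\gamma$ with respect to a common longitude. These limits depend only on the isotopy classes of $\xi_+$ and $\xi_-$ individually, so your concern that $\psi_+\neq\psi_-$ is harmless.

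Theorem~\ref{thm: homotopy} shows the two slopes are equal and constant at infinity exactly when $\gamma^{\pm1}$ is represented by a periodic orbit. In the periodic case, the unstable cylinder of the orbit pins down both dividing sets. Otherwise, the quotient annulus of the bifoliated plane embeds in a skew annulus or an annulus of Reeb components, where the slopes are forced apart. Theorem~\ref{theorem: BFM} of Barthelm\'e--Frankel--Mann then recovers the flow up to isotopy orbit equivalence from the set of unoriented periodic classes.
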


This result uses our Theorem \ref{thm: homotopy}, where we prove that the isotopy classes of $\xi_+$ and $\xi_-$ determine the free homotopy data of periodic orbits of $X$, together with the results of Bowden-Massoni \cite{BoM}. Both results hold for any (possibly toroidal) closed $3$-manifold $M$. To deduce orbit equivalence, we then apply the orbit rigidity criteria of Barthelm\'e-Frankel-Mann \cite{BFM}, which only holds under certain additional assumptions, that are automatic if $M$ is hyperbolic.
In particular, in Theorem \ref{theorem: classification},
  we could replace the hypothesis ``$M$ is hyperbolic" by the more general situation when $M$ is toroidal, $X$ is transitive and has no transverse incompressible tori or Klein bottles, see \cite[Theorem 6.1.9]{BaM}.



In the general toroidal case, our techniques yield restrictions on the dynamics of Anosov flows on hyperbolic pieces in the following sense:
\begin{theorem} \label{thm: finiteness_JSJ} Let $M_{hyp} \subseteq M$ be a hyperbolic piece of the JSJ-decomposition. The subsets of conjugacy classes of $\pi_1(M_{hyp}) \subseteq \pi_1(M)$ that are realised by periodic orbits of an Anosov flow is finite. Furthermore, the possible slopes and number of curves of (quasi-)transverse representatives of the JSJ-tori that are realised by periodic orbits of an Anosov flow are also finite.
\end{theorem}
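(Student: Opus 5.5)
The plan is to deduce Theorem \ref{thm: finiteness_JSJ} from the homotopy-theoretic statement announced above, Theorem \ref{thm: homotopy}. That result says the isotopy classes of the Eliashberg--Thurston/Mitsumatsu pair $(\xi_+,\xi_-)$ determine the free homotopy data of the periodic orbits of $X$, and it holds on any closed $3$-manifold. The two remaining inputs are a finiteness statement for universally tight contact structures (the pair $\xi_\pm$ is universally tight, being transverse to an Anosov flow) and a way to read off the data concerning $M_{hyp}$ from the free homotopy data.

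First, finiteness of the relevant contact structures. On a toroidal $M$, universally tight contact structures need not be finite up to isotopy: Dehn twisting along incompressible tori produces infinitely many. I would therefore use the refined Colin--Giroux--Honda statement. There are only finitely many universally tight contact structures up to isotopy and up to the action of Dehn twists along the JSJ tori (equivalently, finiteness with a bound on the Giroux torsion along those tori, or finitely many after restricting to the atoroidal pieces). Concretely, I would cut $M$ along the JSJ tori made transverse or quasi-transverse to $X$. The restriction of $\xi_\pm$ to $M_{hyp}$, with boundary behaviour normalised, then lies in finitely many isotopy classes relative to the boundary, modulo twisting along the boundary tori.

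Second, I would localise Theorem \ref{thm: homotopy}. Its proof should construct, from $\xi_+$ and $\xi_-$, the set of free homotopy classes of periodic orbits, and the construction should be local enough to apply on $M_{hyp}$. The idea is to consider periodic orbits whose free homotopy class lies in $\pi_1(M_{hyp})$; such orbits can be isotoped into $M_{hyp}$ since the JSJ tori are incompressible. I would then show that which classes of $\pi_1(M_{hyp})$ occur depends only on $\xi_\pm|_{M_{hyp}}$ up to isotopy and boundary twisting. Twisting along a boundary torus acts by a Dehn twist, which induces an inner-type automorphism on conjugacy classes of $\pi_1(M_{hyp})$ not meeting the boundary, so it does not change the set of realised conjugacy classes. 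Combined with the first step, this gives finitely many possible subsets.

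Third, the slopes on the JSJ tori. Periodic orbits on a (quasi-)transverse torus $T$ are closed leaves of the characteristic foliation of $\xi_\pm$ restricted to $T$, or equivalently they are determined by the dividing/Legendrian data of $\xi_\pm$ near $T$. Their slope and number are therefore encoded in the contact structures near $T$, up to the finitely many choices from the first step. The main obstacle is here and in the first step: controlling torsion-type twisting along the JSJ tori, which could a priori change slopes. I would try to bound the twisting using the transversality of $\xi_+$ and $\xi_-$ to the Anosov flow, which should forbid large Giroux torsion, since a torsion domain would force a tangency with the flow direction.
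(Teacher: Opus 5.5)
Your skeleton is the paper's: Colin--Giroux--Honda finiteness modulo fibered Dehn twists, then showing the twists do not change the verdict for classes in $\pi_1(M_{hyp})$, then the JSJ tori. The central step, however, has a gap.

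Theorem \ref{theorem: toroidal} classifies $\xi_+$ and $\xi_-$ \emph{separately}. After reducing to finitely many representatives you only know $\xi'_+\simeq h^+_*\xi_+$ and $\xi'_-\simeq h^-_*\xi_-$, where $h^+$ and $h^-$ are unrelated compositions of Dehn twists along incompressible tori (not only JSJ tori, possibly intersecting).

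Your observation that a twist fixes conjugacy classes of $\pi_1(M_{hyp})$ does let you normalise $h^+=\mathrm{id}$, by replacing $X'$ with $(h^+)^{-1}_*X'$. The leftover twist $h=(h^+)^{-1}h^-$, however, acts on $\xi'_-$ alone. The pair $(\xi'_+,\xi'_-)$ is then not isotopic to the image of $(\xi_+,\xi_-)$ under any single diffeomorphism, so Theorem \ref{thm: homotopy} cannot be used as a black box. Your proposed ``localisation'' to $\xi_\pm|_{M_{hyp}}$ is asserted without a mechanism: the slopes are measured on the whole cover $M^\gamma$, whose ends run through lifts of the other pieces.

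The missing idea is to open up the criterion. $\gamma$ is periodic iff $\xi_+^\gamma$ and $\xi_-^\gamma$ have equal slopes, constant at infinity, and each slope can be controlled on its own. For $\gamma\in\pi_1(M_{hyp})$ not peripheral, $\gamma$ is not conjugate into any twisting torus group, so every such torus lifts to $M^\gamma$ as planes that can be taken disjoint from a core. Hence the lift $h^\gamma$ is isotopic to the identity, in particular preserves the longitude, and $(\xi'_-)^\gamma$ has the same slope at infinity as $\xi_-^\gamma$. This also explains why peripheral classes need a separate argument (Lemma \ref{lem_cylinder_periodic}): there the tori do not lift to planes.

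There are two secondary problems.

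First, Theorem \ref{theorem: toroidal} needs a torsion bound as a hypothesis; it is not ``equivalent'' to finiteness modulo twists. Your suggested reason for vanishing torsion also fails: $\xi_\pm$ are tangent to $X$, not transverse to it, and a nowhere-vanishing Legendrian vector field is no obstruction to a torsion domain. The correct input is strong fillability (Theorem \ref{thm:Anosov_tight}) or the non-rotativity of the Giroux normal form near incompressible tori (Lemma \ref{lem_Giorux_Normal}).

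Second, for the JSJ tori, noting that the periodic orbits on one quasi-transverse torus are closed leaves of $\xi_\pm(T)$ does not give an invariant of the contact structure. You need two invariants:
\begin{enumerate}
\item the constant common slope of the Giroux normal form (Lemma \ref{lem_Giorux_Normal});
\item the minimal number of dividing curves in the isotopy class. Lemma \ref{lem:Slope_Data} identifies this with the number of Birkhoff annuli, by producing disjoint attracting (or repelling) annuli from consecutive pairs of lozenges on every torus of the normal form.
\end{enumerate}
Both are unchanged by the twists: a twist along $T$ only translates its dividing set, and the other tori can be isotoped off $T$. So once torsion vanishes, there is no slope-changing twisting left to control.
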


In the pseudo-Anosov case we obtain finiteness for flows such that the complement of the singular orbits is atoroidal, which includes the case that the flow has no perfect fits, see \cite{BaTZ}:
\begin{theorem}\label{thm:finiteness_pA_case}
    On any closed hyperbolic $3$-manifold, the number of pseudo-Anosov flows such that the complement of the singular orbits is atoroidal 
    is finite up to isotopy orbit equivalence.
\end{theorem}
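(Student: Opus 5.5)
The plan is to imitate the Anosov argument: attach to each such pseudo-Anosov flow $X$ a pair of universally tight contact structures of opposite signs, use their isotopy classes to pin down the free homotopy data of periodic orbits, and conclude with an orbit rigidity theorem.

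The first step is to build the contact pair. Following Mitsumatsu and Eliashberg--Thurston, the weak stable and unstable (singular) foliations of $X$ give a transverse pair $(\xi_+,\xi_-)$. For this I would first blow up the singular orbits, or remove them and work in the complement $M\setminus\Gamma$. Near each $p$-prong singular orbit I glue in a model in which the pair is defined, as in the constructions of \cite{BaTZ}. The resulting $\xi_\pm$ are positive and negative contact structures transverse to $X$, and they are universally tight: they lift to the universal cover, where they are transverse to a product-like flow, as in the Anosov case.

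By \cite{CGH}, $M$ carries finitely many universally tight contact structures up to isotopy. So it suffices to show that the isotopy classes of $\xi_+$ and $\xi_-$ determine $X$ up to isotopy orbit equivalence, or at least up to finitely many choices.

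The second step is the pseudo-Anosov analogue of Theorem~\ref{thm: homotopy}: the set of free homotopy classes (with multiplicity) realised by periodic orbits of $X$ should be determined by $(\xi_+,\xi_-)$ up to isotopy.
\begin{itemize}
\item For non-singular orbits I would run the same classical contact-topological argument as in the Anosov case: a periodic orbit is Legendrian-realisable or transverse in a controlled way relative to the pair, and its class is read off from the contact structures.
\item The singular orbits need separate treatment. Their number and prong data are bounded by an Euler characteristic/index argument once the homotopy class of $\xi_\pm$, equivalently the Euler class of the plane field, is fixed.
\end{itemize}
This leaves finitely many possibilities for the singular link $\Gamma$ up to homotopy. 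In a hyperbolic manifold, homotopic knots that are closed geodesic-like (which periodic orbits of pseudo-Anosov flows are, up to finite ambiguity) are isotopic, which fixes $\Gamma$ up to isotopy.

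The final step applies an orbit-rigidity criterion for pseudo-Anosov flows in the spirit of \cite{BFM}. Two transitive pseudo-Anosov flows whose periodic orbits realise the same free homotopy data are orbit equivalent, provided the flow has no ``tree of scalloped regions''. That obstruction corresponds to transverse tori in the complement of the singular orbits, which the atoroidality hypothesis on $M\setminus\Gamma$ rules out. Transitivity is automatic on hyperbolic $M$. Combining the steps, the number of isotopy orbit equivalence classes is bounded by the number of pairs of universally tight contact structures, which is finite.

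I expect the main obstacle to be the second step near singular orbits. I have to check that the contact structures still detect which homotopy classes are represented, and that the extension over prong neighbourhoods does not lose the correspondence used in Theorem~\ref{thm: homotopy}. The first thing I would try is to work entirely in the blown-up manifold $M\setminus\Gamma$. There the flow restricts to an Anosov-like flow with boundary, and the atoroidality hypothesis lets me use the hyperbolic structure of the cusped manifold directly.
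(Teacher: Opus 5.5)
Your proposal has a genuine gap, in two places. The first is the singular orbits. An Euler class or index computation can at best constrain prong numbers and a weighted sum of the homology classes of the singular orbits; it cannot reduce the infinitely many free homotopy classes in $\pi_1(M)$ to finitely many candidates. The next step is also unfounded: you claim that homotopic periodic orbits in a hyperbolic manifold are isotopic because they are ``geodesic-like up to finite ambiguity''. But homotopy does not imply isotopy for knots, and a free homotopy class can contain infinitely many distinct periodic orbits (Fenley, for skewed $\R$-covered flows). The paper does not obtain this finiteness from contact geometry at all. It imports Li's theorem \cite{Li} that there are only finitely many possibilities for the singular orbits up to isotopy, and then fixes $\Gamma$. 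That fixed data also disposes of the peripheral elements of $\pi_1(M_{reg})$, which act freely on $\widetilde{\mathcal{P}}_{reg}$ although their images have fixed points in $\mathcal{P}$.

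The second problem is more structural: the contact structures and the finiteness theorem must live on $M_{reg}=M\setminus N(\Gamma)$, and this is exactly where the atoroidality hypothesis enters. The detection mechanism of Theorem \ref{thm: homotopy} (equal, constant asymptotic slopes of $\xi_\pm^\gamma$) needs $\xi_\pm$ to be \emph{tangent} to $X$, not transverse as you wrote. It also needs them to lie between the directions of a \emph{nonsingular} bifoliation, so that a non-periodic $\gamma$ acts as a translation and a compact annulus $A_0$ embeds in a skew or Reeb-type annulus. Near a $p$-prong orbit with $p\neq 2$ no such bifoliation exists, which is why the paper works with the generalised loom space $\widetilde{\mathcal{P}}_{reg}$ acted on by $\pi_1(M_{reg})$. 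Suppose instead you glue model pieces over $N(\Gamma)$ to get structures on $M$. Then you would have to show the glued pair is determined by $X$ up to isotopy. The exhausting tori in $\Z$-covers of $M$ would meet lifts of the glued regions, where nothing ties the slopes to the flow. And an isotopy on $M$ may sweep across $\Gamma$, so the isotopy class on $M$ does not control slopes on $\Z$-covers of $M_{reg}$. The paper instead shows directly that $\xi_\pm|_{M_{reg}}$ are universally tight (their universal cover embeds in $(U^*\mathbb{H},\xi_{can})$) and have no Giroux torsion (they are non-rotative on $\Z^2$-covers, Lemma \ref{lem_Giorux_Normal}). They can also be given convex boundary with dividing sets fixed by the prongs. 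It then applies \cite[Th\'eor\`eme 8]{CGH} to the \emph{atoroidal} manifold $M_{reg}$, obtaining finiteness up to Lutz twists along boundary-parallel tori. These twists can be kept off the core of the cyclic covers and so do not change slopes. In your outline the atoroidality of $M\setminus\Gamma$ is only invoked to exclude trees of scalloped regions in \cite{BFM}, which is automatic on hyperbolic $M$. So if your argument worked it would prove finiteness for all pseudo-Anosov flows on hyperbolic manifolds, which the paper explicitly leaves open for contact-geometric methods. That is a sign the hypothesis has to be doing work elsewhere.
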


We see Theorems \ref{thm: finiteness_JSJ} and \ref{thm:finiteness_pA_case} as building blocks toward a proof of the Finiteness Conjecture for transitive pseudo-Anosov flows on arbitrary closed $3$-manifolds.
Cutting manifolds into JSJ-pieces, one still needs to understand the effect of  fibered Dehn twists/Lutz modifications along incompressible tori, as given in Theorem \ref{theorem: toroidal}/\ref{theorem: generation} from \cite{CGH}. This will be the subject of future investigation, together with the other research directions highlighted in Section \ref{section: questions}.


\subsection{Outline of Proof}

An Anosov flow $X$ on a $3$-manifold $M$ comes equipped with a pair $(\xi_+,\xi_-)$ of positive and negative universally tight contact structures that intersect transversally along $\langle X\rangle$. We show in Theorem \ref{thm: homotopy} that, given a free homotopy class of loops $\gamma$ in $M$, the pulled-back contact structures $(\xi_+^\gamma,\xi_-^\gamma)$ on the infinite  cover $M^\gamma :=\widetilde{M}/\langle \gamma\rangle \simeq S^1\times \R^2$ of $M$ detect whether $\gamma$ is, up to orientation, represented by a periodic orbit of $X$ or not.
The discriminating property is the pair of {\it asymptotic slopes} for both $\xi_+^\gamma$ and $\xi_-^\gamma$ of this open solid torus: they are {\it constantly equal at infinity} if and only if $\gamma^{\pm 1}$ is represented by a closed orbit.
By  Theorem \ref{theorem: BFM} of Barthelm\'e-Frankel-Mann \cite{BFM},
this data is enough to determine the class of $X$ modulo isotopy orbit equivalence when $M$ is hyperbolic. 
Thus, on a closed hyperbolic $3$-manifold, the number of isotopy orbit equivalence classes of Anosov flows is bounded by the number of classes of pairs of positive and negative universally tight contact structures $(\xi_+,\xi_-)$, each being considered independently modulo isotopy.
To conclude, we appeal to a classification theorem of Colin-Giroux-Honda \cite{CGH}: on a closed hyperbolic $3$-manifold, there are finitely many tight contact structures up to isotopy. This approach also applies, more or less directly, to the pseudo-Anosov case in the situation that the complement of the singular orbits is atoroidal as well.

The control on the dynamics of an Anosov flow on hyperbolic JSJ-pieces is similar and uses a careful analysis of contact structures near JSJ-tori. 
In the toroidal case, the finiteness of tight contact structures with vanishing Giroux torsion, which is always a property of $\xi_\pm$, is only up to fibered Dehn twists along incompressible tori and isotopies, and this is one of the main reasons why we do not directly get a proof of the finiteness conjecture in this situation. 



\subsection*{Acknowledgments}

We thank Thomas Barthelm\'{e}, Pierre Dehornoy, Katie Mann, Thomas Massoni, Chi Cheuk Tsang and Jonathan Zung for numerous and precious discussions, indications, and comments.

This material is based on work supported by the National Science Foundation under Grant No. DMS-1928930, while the authors were in residence at the Simons Laufer Mathematical Sciences Institute in Berkeley, California, during the spring semester of 2026.
JB is supported by the Heisenberg-Program (BO 4423/4-1) of the German Science Foundation. VC is supported by the Institut Universitaire de France.

\subsection*{Relation to other work} While this text was being finalised, Gabai and Li announced a proof of the finiteness of Anosov and pseudo-Anosov flows on closed hyperbolic $3$-manifolds using  different methods.

\section{Contact geometry in Dimension 3}
Let $M$ be an oriented $3$-manifold. A {\it positive (resp. negative) contact structure} on $M$ is a plane field $\xi$ locally defined as the kernel of a non vanishing differential $1$-form $\alpha$ such that $\alpha\wedge d\alpha >0$ (resp. $\alpha\wedge d\alpha <0$).

A key point in contact topology is to understand the interactions of contact planes with curves and surfaces contained in $M$.
A curve in a contact manifold $(M,\xi)$ is {\it Legendrian} if it is everywhere tangent to $\xi$.
If a Legendrian curve $\gamma$ is null homologous in $(M,\xi)$ and if $\xi$ is coorientable, its {\it Thurston-Bennequin invariant} is the linking number between $\gamma$ and $\gamma_\epsilon$, where $\gamma_\epsilon$ is a small push-off of $\gamma$ in some transverse direction to $\xi$. If there exists a Legendrian unknot whose Thurston-Bennequin invariant is $0$, then the contact structure $\xi$ is said to be {\it overtwisted}. Otherwise it is {\it tight}. A contact manifold $(M,\xi)$ is universally tight whenever the pull-back of $\xi$ on the universal cover of $M$ is tight (and virtually overtwisted otherwise). As a keystone of the theory, the standard contact manifold $(\R^3,\ker (dz+xdy))$ is tight by a theorem of Bennequin \cite{Be}.

If $\xi$ is an oriented plane field on an oriented $3$-manifold $M$, the {\it characteristic foliation} $\xi (S)$ of an oriented surface $S\subset (M,\xi)$ is the singular foliation generated by the singular (oriented) line field $\xi \cap TS$. 
When $\xi$ is a contact structure, a surface $S\subset (M,\xi)$ is {\it convex} if it is transverse to a {\it contact vector field} $X$, that is a vector field whose flow preserves $\xi$. This fundamental notion introduced by Giroux has important features, see \cite{Gi1}.
First, convexity is a $C^\infty$ generic property of surfaces. Second, a convex surface $S$ is equipped with a {\it dividing set} $\Gamma_S$, which is a non-empty multicurve in $S$ defined as the set of points $x \in S$ where $X(x)\in \xi(x)$. This multicurve is a crucial data that essentially determines the germ of $\xi$ near $S$ up to isotopy. It is transverse to the characteristic foliation of $S$ and divides $S$ into regions where the characteristic foliation is alternatively expanding (i.e. dilating some area form) and contracting (i.e. contracting an area form). 

Moreover, any characteristic foliation drawn on $S$ and that is similarly {\it divided} by $\Gamma_S$ can be realized as the characteristic foliation $\xi(S')$ of a surface $S'$ isotopic to $S$ by a $C^0$ small isotopy transverse to $X$ and relative to $\Gamma_S$, a property called {\it Giroux's flexibility Lemma}.
When the surface $S$ is a torus and when the ambient contact manifold is tight, the dividing set is made of an even number of essential (parallel) curves. Giroux's flexibility Lemma then tells that the torus $S$ can be isotoped relatively to $\Gamma_S$ to a $C^0$ close one $S'$ whose characteristic foliation is non singular, each component of $S'\setminus \Gamma_{S'}=S'\setminus \Gamma_S$ containing exactly one periodic orbit of $\xi(S')$, alternatively attracting and repelling.

\subsection{Giroux Normal Form} When we consider a thickened torus $(T^2\times [0,1],\xi)$ where $\xi$ is a universally tight contact structure that makes the boundary convex, then a theorem of Giroux \cite[Proposition1.8]{Gi2} shows that $\xi$ can be put in normal form by isotopy with respect to the foliation $(T^2\times \{t\})_{t\in [0,1]}$. Each torus $T^2\times \{t\}$ is then either convex, or transverse to $\xi$ with a characteristic foliation 
that is a suspension. 

These tori get organised into subintervals of $[0,1]$. Whenever we have an interval of convex tori, the slope of the dividing curves remain constant, and whenever we have an interval of suspension characteristic foliations, the slope is monotonically increasing or decreasing with $t$, depending on the sign of $\xi$.  In the case that the slopes are constant we call the contact structure {\em non-rotative}.

Moreover, if the thickened torus $(T^2\times [0,1],\xi)$ is embedded in a tight contact manifold $(M,\xi)$ where it becomes compressible
(e.g. it is a layer of concentric tori in a solid torus), then the slopes of the dividing sets of the characteristic foliations can never be meridional (as otherwise we would get an overtwisted structure).
We will exploit these properties in the rest of the paper.

 \subsection{Coarse classification of tight contact structures}

Given a contact manifold $(M,\xi)$ and an embedded torus $T\subset M$ that is transverse to $\xi$, an equation of $\xi$ in a small well-chosen tubular neighbourhood $T_{(x,y)}\times [-1,1]_t$ of $T=T\times \{0\}$ is of the form $$\xi =\text{Ker} (\cos (f(x,y,t))dx-\sin(f(x,y,t))dy),$$ where the (positive) contact condition is given by $\frac{\partial f}{\partial t}>0$.
In this situation, a {\it k-Lutz modification of $\xi$} is defined by replacing $\xi$ on $T\times [-1,1]$ by $$\text{Ker} (\cos(f(x,y,t)+k\pi (t+1))dx-\sin(f(x,y,t)+k\pi (t+1))dy),$$ together with a smoothing at the boundary. That is we add $k$ full twists to $\xi$ around the $t$-axis.

Such a modification can have very different effects depending on the characteristic foliation of the torus $T$.
If it is a suspension, then the characteristic foliations on the tori $T\times \{t\}$ are all suspensions and their slopes rotate $k$ full extra turns.
If the characteristic foliation of $T$
is made of Reeb components pointing in the same direction, then the characteristic foliations keep a constant slope and move as $t$ varies by translation transversally to the slope direction in one direction or another, depending on the way the Reeb components bend.

To keep track of the effect of the first type of behavior, Giroux introduced the notion of {\it torsion}.

Consider the contact structure on $T^2 \times \R=(\R/\Z)_{(x,y)}^2\times \R_t$ defined by $$\xi_k = \text{Ker}(\cos(kt)dx-\sin(kt)dy).$$
A contact structure $(M,\xi)$ is said to have {\em Giroux $k$-Torsion} if there is a contact embedding of 
 $$( T^2 \times [0,2\pi], \xi_k) \hookrightarrow (M,\xi).$$

It is shown in \cite{CGH} that Lutz modifications are the only possible source of infinite families of tight contact structures.

\begin{theorem}\label{theorem: generation}\cite[Th\'eor\`eme 7]{CGH}
On a closed contact $3$-manifold, there exists a finite family of tight contact structures $\xi_1,\dots, \xi_n$ and for each contact structure $\xi_k$, $1\leq k\leq n$, a finite collection of tori $T^k_1,\dots,T^k_{i_k}$ transverse to $\xi_k$, such that every tight contact structure on $M$ is, up to isotopy, obtained by Lutz modifications on one of the $\xi_k$ along the tori $T^k_1,\dots,T^k_{i_k}$.
\end{theorem}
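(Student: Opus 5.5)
This theorem is quoted from \cite{CGH}, so the plan is to reconstruct the Colin--Giroux--Honda argument in outline rather than to derive it from anything proved in this paper. The structure is: decompose $M$ along a fixed finite collection of surfaces, so that the contact structure on the complement is controlled up to finitely many choices. The remaining infinite families come only from thickened tori, and there they are precisely Lutz twists.

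The first step is to fix a handle decomposition or triangulation of $M$. I would Legendrian-realize the $1$-skeleton and make the $2$-cells and the boundary of a neighborhood of the $2$-skeleton convex, with twisting numbers maximized. By Bennequin-type bounds (the Thurston--Bennequin inequality), tightness bounds the twisting of each Legendrian edge from above. Edges with arbitrarily negative twisting are the candidate source of infinitely many structures.

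Next I would use Honda's bypass attachment and state transitions together with Giroux's flexibility and Eliashberg's uniqueness on $3$-balls. If every twisting number is bounded below, this leaves finitely many possible dividing sets on the $2$-skeleton. Hence there are finitely many tight structures with that skeleton data.

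The main obstacle is the unbounded case. Here one must show that large negative twisting forces, after isotopy, a thickened torus $T^2\times[0,1]$ that is either incompressible or separates off a solid torus. On this region the Giroux normal form shows the slopes rotate monotonically, and increasing the twisting corresponds to adding full rotations, i.e.\ Lutz modifications along a transverse torus $T\times\{1/2\}$. I would first try to localize each divergent edge inside a torus region coming from a JSJ-type or normal-surface decomposition associated to the triangulation; there are finitely many such normal tori up to the relevant equivalence. On a solid torus, the structure must stay tight, which forbids meridional slopes, so rotation is bounded there. Thus only incompressible tori contribute.

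Collecting the finitely many bounded configurations gives $\xi_1,\dots,\xi_n$. For each $\xi_k$ the finitely many candidate tori $T^k_j$, made transverse to $\xi_k$, carry all the remaining torsion freedom. Every tight structure is therefore isotopic to some $\xi_k$ modified by Lutz twists along the $T^k_j$.
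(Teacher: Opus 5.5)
The paper gives no proof of this statement. It is imported from \cite{CGH}, and only the discussion right after it hints at the mechanism, so your outline has to stand on its own as a reconstruction. Its first part is the right framework: Legendrian $1$-skeleton, convex faces, the Thurston--Bennequin upper bound, and Eliashberg's uniqueness on the $3$-simplices giving finitely many structures once the twisting is bounded. But the step you call ``the main obstacle'' is the whole content of \cite[Th\'eor\`eme 7]{CGH}, and you only assert it. The two tools you suggest for it also fail as stated. A JSJ-type localization cannot suffice: $T^3$ has an empty JSJ family, yet it carries infinitely many isotopy classes of tight structures, hence structures of unbounded complexity with respect to any fixed triangulation. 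And there is no finiteness of normal tori up to isotopy: $T^3$ contains infinitely many isotopy classes of incompressible tori, each of which can be normalized. What \cite{CGH} provide is, for each $\xi_k$, a finite family of tori that in general intersect and together form a branched surface, as the paper notes after the statement. Extracting such a family from the combinatorics of dividing sets on the faces is a genuine normal-surface argument that your proposal does not contain.

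More seriously, the mechanism you do describe misses one of the two sources of infinitely many isotopy classes, so your final sentence does not follow from your steps. You identify growth of edge twisting with adding full rotations in a Giroux normal form, and conclude that the tori ``carry all the remaining torsion freedom''. But twisting also becomes unbounded with no added rotation. If $\phi$ is a fibered Dehn twist along an incompressible torus, the structures $\phi^m_*\xi$ are all contactomorphic to $\xi$, so they have the same Giroux torsion, yet they are in general pairwise non-isotopic. This is exactly why Theorem~\ref{theorem: toroidal} only holds up to fibered Dehn twists; on $T^3$ the images of $\ker(\cos(2\pi z)\,dx-\sin(2\pi z)\,dy)$ under $SL(3,\mathbb{Z})$ already give infinitely many isotopy classes. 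By your own bounded-twisting step, such a family must have unbounded edge twisting. Since its torsion stays bounded, that twisting cannot come from full rotations along incompressible tori, nor, as you note, along compressible ones. In \cite{CGH} these structures come from Lutz modifications along tori whose characteristic foliation consists of Reeb components. Such a modification translates the characteristic foliations without rotating the slope and essentially acts as a fibered Dehn twist, as the paper explains right after the statement. An argument that tracks only the monotone rotation of slopes never produces them, so you must also treat the non-rotative regions.
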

Note that the tori $T^k_1,\dots,T^k_{i_k}$ are usually not disjoint, but all together form a branched surface so that we can still perform Lutz modifications at will despite the intersections.

The next step of \cite{CGH} was to exploit the fact that, though not correct as roughly stated now, performing a Lutz modification on a torus whose characteristic foliation is made of Reeb components essentially produces a contact structure that is conjugated to the initial one by a fibered Dehn twist on $T\times [-1,1]$. Notice here that when the torus is compressible and the manifold irreducible, such a Dehn twist is isotopic to the identity.
On the other hand, if a Lutz modification is operated on a torus whose characteristic foliation is a suspension, then it increases the Giroux torsion. In this case, if the torus is compressible, it immediately introduces an overtwisted disk (a torus $T\times \{t\}$ will have a characterisitic foliation with meridional slope). In particular, an atoroidal tight contact $3$-manifold never has Giroux torsion.

Summarizing, the final conclusions of \cite{CGH} that we will use in this work are the following results.

\begin{theorem}\label{theorem: toroidal}\cite[Th\'eor\`eme 6]{CGH}
Given $N\in \N$, a closed $3$-manifold carries finitely many tight contact structures of Giroux torsion less than $N$ up to isotopy and fibered Dehn twists along incompressible tori.
\end{theorem}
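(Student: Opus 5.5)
This statement is quoted from \cite[Th\'eor\`eme 6]{CGH}, so in the paper it is used as a black box. If I were to prove it, I would deduce it from Theorem \ref{theorem: generation} by analysing what a Lutz modification along each of the finitely many tori $T^k_j$ can do.

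\emph{Reduction.} By Theorem \ref{theorem: generation}, every tight $\xi$ is isotopic to a structure obtained from some $\xi_k$ by performing $m_j$-Lutz modifications along $T^k_1,\dots,T^k_{i_k}$. The integer vector $(m_1,\dots,m_{i_k})$ is the only unbounded parameter, and the goal is to show that only finitely many vectors matter once we bound the torsion and quotient by fibered Dehn twists.

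\emph{Splitting by characteristic foliation.} Using Giroux's flexibility and normal form, I would isotope each $T^k_j$ within its transverse class so that its characteristic foliation is either a linear suspension or consists of Reeb components.
\begin{enumerate}
\item[(i)] \emph{Suspension foliation.} A $k$-Lutz modification adds $k$ full rotations of the slope in $T\times[-1,1]$. This embeds $(T^2\times[0,2\pi],\xi_k)$, so the Giroux torsion is at least of order $|k|$. If $T$ is compressible, a meridional slope appears and the structure becomes overtwisted. Bounded torsion $N$ therefore bounds $|m_j|$ on such tori, leaving finitely many choices.
\item[(ii)] \emph{Reeb components.} Here the modification translates the closed leaves without changing the slope. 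I would show that the result is contactomorphic to the original structure via a fibered Dehn twist of $T\times[-1,1]$ along the direction of the closed leaves, isotopic to the identity away from $T\times[-1,1]$. If $T$ is compressible in an irreducible $M$, this twist is isotopic to the identity. In either case the modification disappears modulo isotopy and fibered Dehn twists along incompressible tori.
\end{enumerate}

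\emph{Main obstacle.} Case (ii) is the hardest step; as the paper notes, it is ``not correct as roughly stated.'' Two things must be handled. First, the tori meet along a branched surface, so one must check that the twists along different tori commute, or can be performed successively, up to isotopy. Second, the identification ``Lutz twist $=$ conjugation by a Dehn twist'' holds only after controlling the boundary smoothing and the direction in which the Reeb components bend.

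To deal with this, I would first cut $M$ along the branched surface into pieces and work in local models $T^2\times[-1,1]$ with convex boundary. There I would compare the modified and unmodified structures via the uniqueness of non-rotative tight structures with fixed boundary dividing sets, using Giroux's normal form. The residual difference is then a diffeomorphism supported near $T$ that preserves the dividing curves, hence a power of the fibered Dehn twist.

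Combining (i) and (ii) leaves finitely many torsion-bounded vectors $(m_j)$ for each of the finitely many $\xi_k$, which gives the claim.
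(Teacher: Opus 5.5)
Your route is the one the paper itself sketches in the paragraph before the statement. The paper gives no proof and imports the result from Colin--Giroux--Honda. The route is: start from Theorem~\ref{theorem: generation}, bound the Lutz twists along suspension-type tori by the torsion, and absorb those along Reeb-type tori into fibered Dehn twists.

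The gap is in step (ii). You rightly identify it as the real content, and the paper explicitly calls it ``not correct as roughly stated''. Your repair does not close it.

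\emph{The uniqueness you invoke is false.} You appeal to uniqueness of non-rotative tight structures on $T^2\times[-1,1]$ with fixed convex boundary. Already with two dividing curves on each boundary torus, these structures form an infinite family, distinguished up to isotopy rel boundary by an integer holonomy. A Lutz twist along a Reeb-type torus changes precisely this holonomy. Were uniqueness true, the modified and unmodified structures would be isotopic rel boundary, and no Dehn twist would appear at all.

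\emph{The holonomy created is in general not that of a Dehn twist.} Suppose $T$ carries $2n$ Reeb components pointing the same way. Then the leaf direction along a transverse loop has degree $n$. So each full turn of the Lutz twist slides the pattern by two components, i.e.\ by one of its $n$ periods, transversally to the closed leaves. Hence a $k$-Lutz modification is conjugate to the original by a Dehn twist supported near $T$ only when $n\mid k$. That twist translates transversally to the closed leaves, not along them as you wrote. In general, writing $k=qn+r$ with $0\le r<n$, a residual $r$-Lutz modification survives. This is harmless for finiteness, but the modification does not ``disappear'', and these residues must enter the count.

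Two further steps are asserted rather than proved.

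\emph{The dichotomy (i)/(ii).} It cannot be produced by Giroux's flexibility lemma, which concerns convex surfaces. A transverse torus whose characteristic foliation is a suspension without closed leaves is not convex. A nonsingular foliation of $T^2$ need not be of either pure type. The translation behaviour used in (ii) also requires the Reeb components to point in the same direction, a hypothesis you dropped. This control has to come from how the tori of Theorem~\ref{theorem: generation} are constructed, not from an isotopy performed afterwards.

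\emph{Interaction along the branched surface.} Cutting along the branched surface does not produce disjoint product neighbourhoods $T^2\times[-1,1]$. The tori cross, so a Dehn twist along one of them moves the others together with the modifications performed along them. Converting Reeb-type modifications into twists one torus at a time therefore needs an argument about how these operations interact. That interaction is exactly what the paper leaves to Colin--Giroux--Honda, and your outline leaves it open too.

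Your treatment of compressible tori in (ii) also only covers irreducible $M$, whereas the statement does not assume irreducibility.
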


In particular, we have the important special case:

\begin{theorem}\label{theorem: atoroidal}\cite[Th\'eor\`eme 2]{CGH}
An atoroidal closed $3$-manifold carries finitely many tight contact structures up to isotopy.
\end{theorem}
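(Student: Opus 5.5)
The plan is to deduce this statement from Theorem~\ref{theorem: toroidal} with $N=1$. For that we need two facts, both coming from atoroidality. First, on an atoroidal closed manifold the fibered Dehn twists along incompressible tori are vacuous. Second, no tight contact structure has Giroux torsion. Granting these, Theorem~\ref{theorem: toroidal} leaves finitely many tight contact structures up to isotopy alone, which is the claim. Alternatively one can work from Theorem~\ref{theorem: generation}: the finitely many Lutz modification tori are all compressible, so a Lutz modification either introduces an overtwisted disk or is realised by a fibered Dehn twist along a compressible torus, which is isotopic to the identity.

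The first fact is immediate. A closed atoroidal manifold contains no incompressible torus, so the equivalence relation ``isotopy and fibered Dehn twists along incompressible tori'' reduces to isotopy.

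For the second fact, suppose $(M,\xi)$ is tight and there is a contact embedding $(T^2\times[0,2\pi],\xi_k)\hookrightarrow (M,\xi)$ with $k\ge 1$.
\begin{enumerate}
\item[(1)] By atoroidality the torus $T=T^2\times\{0\}$ is compressible. Choose a compressing disk $D$ with $\partial D$ an essential simple closed curve of some slope $s$ on $T$.
\item[(2)] Since the planes $\ker(\cos(kt)dx-\sin(kt)dy)$ rotate through a full turn as $t$ goes from $0$ to $2\pi$, some level torus $T_{t_0}=T^2\times\{t_0\}$ has a linear characteristic foliation of slope exactly $s$.
\item[(3)] Isotope $D$ so that it becomes a compressing disk $D'$ for $T_{t_0}$, with boundary a leaf $L$ of that linear foliation. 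The disk $D'$ is $D$ extended by an annulus inside the collar $T^2\times[0,t_0]$, and we arrange that $D'$ meets $T_{t_0}$ transversally along $L$ only.
\item[(4)] $L$ is Legendrian, and the contact planes along $L$ are tangent to $T_{t_0}$. Hence the contact framing of $L$ equals the framing induced by $T_{t_0}$, which equals the framing induced by $D'$. So $L$ is a Legendrian unknot with $\mathrm{tb}(L)=0$, i.e.\ $D'$ is an overtwisted disk, contradicting tightness.
\end{enumerate}
Thus tight structures on $M$ have Giroux torsion $0<1$, and Theorem~\ref{theorem: toroidal} applies.

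The main obstacle is step (3): choosing the compressing disk so that its boundary lies on the correct level torus and the framing comparison is honest. In particular we must ensure that the interior of $D'$ does not cross $T_{t_0}$ in a way that spoils the surface framing. I would first try cutting $D$ along its intersections with the level torus and using an innermost-disk argument to obtain a compressing disk embedded on one side of $T_{t_0}$. If needed, I would instead invoke the standard fact used in \cite{CGH} that meridional slopes cannot appear on compressible tori in tight manifolds, as recalled in the Giroux normal form discussion above.
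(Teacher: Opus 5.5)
Your main route is the paper's. Theorem~\ref{theorem: atoroidal} is presented there as the special case of Theorem~\ref{theorem: toroidal} in which there are no incompressible tori to twist along, and in which Giroux torsion vanishes: inside a compressible torus layer, a full turn of the planes produces a level of meridional slope and hence an overtwisted disk. The paper only asserts this last fact, and that is where your write-up needs repair.

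In step (4) your stated reason is false. The planes $\ker(\cos(kt)dx-\sin(kt)dy)$ contain $\partial_t$, so they are everywhere transverse to $T_{t_0}$. If they were tangent along $L$, then $L$ would be a circle of singularities, not a leaf. What you should use instead is that along $L=c\times\{t_0\}$ the contact plane is $\mathrm{span}(\dot c,\partial_t)$. That plane is tangent to the vertical annulus $c\times[t_0-\delta,t_0]$, which is the collar of $D'$ along its boundary. Hence the contact framing is the $D'$-framing and $\mathrm{tb}(L)=0$ at once.

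In step (3) the danger is not that $\mathrm{int}\,D'$ crosses $T_{t_0}$, since the framing only depends on the germ of $D'$ along $L$. The danger is that $D\cup(c\times[0,t_0])$ need not be embedded: $D$ may leave $T_0$ into the collar, or its interior may run through the collar and hit the annulus. Cutting along $T_{t_0}$ with $t_0$ fixed beforehand still leaves you to check that the disk you end up with has the slope you chose $t_0$ for. It is cleaner to argue as follows.
\begin{enumerate}
\item[(i)] Let $N$ be the image of $T^2\times[0,2\pi]$; its boundary tori are $\pi_1$-injective in $N$.
\item[(ii)] Make $D$ transverse to $\partial N$ and remove inessential intersection circles by disk swaps. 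This needs no irreducibility.
\item[(iii)] Take an innermost remaining circle, or $D$ itself if none remain. The subdisk $E$ it bounds cannot lie in $N$, so $\mathrm{int}\,E\subset M\setminus N$ and $\partial E$ is essential in $T_0$ or $T_{2\pi}$, with some slope.
\item[(iv)] Only now choose $t_0$ realising that slope. Isotope $\partial E$, through levels near $\partial N$, to a linear curve $c$, which is then a leaf at level $t_0$.
\item[(v)] Attach the vertical annulus from $c$ to $c\times\{t_0\}$. The result is an embedded disk because its two pieces lie on opposite sides of $\partial N$.
\end{enumerate}

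Finally, drop the alternative via Theorem~\ref{theorem: generation}. The paper's argument that a Dehn twist along a compressible torus is isotopic to the identity uses irreducibility, which atoroidality does not give (think of $S^1\times S^2$). The paper also says explicitly that identifying these Lutz modifications with fibered Dehn twists is only roughly correct.
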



\section{Anosov Flows and bi-contact structures}
\textbf{Assumption:} We shall assume that all bundles are oriented. This will not affect the generality of our approach, since our arguments characterize the homotopy data of periodic orbits on certain intermediate coverings 
where all bundles can be assumed to be orientable: cf.\ Remark \ref{rem:Finite_Index}. 

\subsection{Anosov Flows} A nonsingular flow\footnote{By slight abuse we will identify the flow with $X$, which is not a problem as all vector fields are assumed to be smooth.}  $(X_t)$ generated by a smooth vector field $X$ is \textit{Anosov} if the tangent bundle of $M$ has a (continuous) flow-invariant splitting
 $$TM =   E^{ss} \oplus \langle X \rangle \oplus E^{uu}.$$
such that there are constants $C, a >0$ for which the inequalities
$$
    \Vert d\varphi_t(v^{s})\Vert \leq C e^{-a t} \Vert v^{s} \Vert, \qquad
    \Vert d\varphi_t(v^{u})\Vert \geq C^{-1} e^{a t} \Vert v^{u}\Vert
$$
hold for all $ t \ge0$ and all $v^{u}\in E^{uu}, v^{s} \in E^{ss}$. Here we have chosen some auxiliary Riemannian metric.

The subbundles $E^{uu}, E^{ss}$ are called the {\em strong unstable} and {\em strong stable} directions of the flow, respectively. 
In this case the $2$-plane fields 
$$ E^{ws}= E^{ss} \oplus \langle X \rangle, \qquad  E^{wu} = E^{uu} \oplus \langle X \rangle,$$
are tangent to foliations $\mathcal{F}^{ws}, \mathcal{F}^{us}$ respectively; these are called the {\em weak stable} and {\em weak unstable} 
foliations of the flow, respectively.

\subsection{Bifoliated Plane}
    Given an Anosov flow $X$ on a closed $3$-manifold $M$ one can consider the lift to the universal cover. Barbot \cite{Bar} and independently Fenley \cite{Fen} proved that the quotient $\widetilde{M}/\widetilde{X} = \mathcal{P} \cong \R^2$ is a plane and the weak (un)stable foliations descend to the quotient to give a transverse pair of foliations $\mathcal{F}_s,\mathcal{F}_u$. The action of the fundamental group on the universal cover then descends to the flow space and preserves the (1-dimensional) leaves of the bifoliated plane. For further details on the structure of these foliations see \cite{BaM}.

    It is a classical fact of Hirsch-Pugh-Shub \cite{HPS} that the weak (un)stable bundles are of class (at least) $C^1$ and hence the projected foliations are tangent to transverse $C^1$ vector fields.
    \begin{example}[Skew-plane] Consider the subset of $\R^2$ given by
    $$\mathcal{P_{\R}} = \{(x,y) \ | \ -1 < x-y < 1\}$$
    with bifoliation given by horizontal and vertical segments. This is called a (positively) skewed bifoliated plane. An Anosov flow whose associated flow space is (oriented) homeomorphic to a skewed bifoliated plane is called skew.
    \end{example}
   \begin{figure}[h]
\centering
\scalebox{1.1}{
\begin{tikzpicture}[x=1cm,y=1cm]

\draw[->,line width=0.4pt] (1,0) -- (5,0);
\draw[->,line width=0.4pt] (0,1) -- (0,5);

\draw[dashed,line width=0.4pt] (1.5,0.5) -- (4.5,3.5);
\draw[dashed,line width=0.4pt] (0.5,1.5) -- (3.5,4.5);

\draw[blue,line width=0.4pt] (0.5,1.5) -- (2.5,1.5);

\draw[blue,line width=0.4pt] (1,2) -- (3,2);

\draw[red,line width=0.4pt] (1.5,0.5) -- (1.5,2.5);

\draw[blue,line width=0.4pt] (1.5,2.5) -- (3.5,2.5);

\draw[blue,line width=0.4pt] (2,3) -- (4,3);

\draw[red,line width=0.4pt] (2.5,1.5) -- (2.5,3.5);

\draw[blue,line width=0.4pt] (2.5,3.5) -- (4.5,3.5);

\draw[red,line width=0.4pt] (3.5,2.5) -- (3.5,4.5);

\node[below] at (5.2,0) {$\mathcal{L}^u$};
\node[left] at (0,5.2) {$\mathcal{L}^s$};

\end{tikzpicture}
}
\caption{The skew-plane $\mathcal{P}_\R$ with (horizontal) stable and (vertical) unstable leaves. (credit: \cite{BaM})}
\label{fig:skew_plane}
\end{figure}
It is a fundamental fact that an Anosov flow is skew or a suspension if and only if its weak foliations are $\R$-covered in the sense that on $\tilde{M}$ the leaf space is Hausdorff and hence homeomorphic to $\R$ (cf. \cite{Fen,Bar}). Since we only consider hyperbolic manifolds below, we will not need to consider the suspension case.

In the case that the bifoliated plane is neither homeomorphic to the skew plane nor the product (affine) plane, then we say that the flow is {\it non-$\R$-covered}.

\subsection*{Lozenges and Scalloped Regions} There are two important types of regions of a bifoliated plane that one can consider.

 A {\it lozenge} denotes a $4$-gon embedded inside the bifoliated plane, whose sides are alternatively tangent to the stable and unstable foliations and which meet at two distinct ideal vertices, see \cite[Definition 2.4.5]{BaM}. Lozenges are called {\it adjacent} if they share a side  (cf.\ Figure \ref{fig:Birkhoff_torus}).

A {\it scalloped region} is a $4$-gon with punctures on the boundary, including at its $4$ vertices, that is embedded in the bi-foliated plane and whose $4$ sides consist of infinite sequences of either stable or unstable leaves meeting at ideal points  
such that its interior is homeomorphic to a bifoliated square. For a more precise definition, we refer to \cite[Definition 2.5.7]{BaM}.

\subsection{Bi-contact Structures and Projectively Anosov Flows} 
Eliashberg-Thurston \cite{ETh} and Mitsumatsu \cite{Mit} considered a weakening of the Anosov condition and defined a flow to be \textit{projectively Anosov} if there is a flow-invariant splitting 
$$TM =   E^{ss} \oplus \langle X \rangle \oplus E^{uu}.$$
and constants $a,C >0$ as above such that for all $t \ge 0$ and $v^{u}\in E^{uu}, v^{s} \in E^{ss}$ the following holds:
$$
    \frac{\Vert d\varphi_t(v^{s})\Vert}{ \Vert d\varphi_t(v^{u})\Vert} \leq C e^{-a t} \frac{\Vert v^{s} \Vert}{\Vert v^{u}\Vert}. 
$$
In this case one also has (flow-invariant) weak stable and unstable distributions, which will not be integrable in general. Moreover, a fundamental fact is that such a projectively Anosov flow is equivalent to a pair of transverse oppositely oriented contact structures that are tangent to the flow.

Mitsumatsu \cite{Mit} and Eliashberg-Thurston \cite{ETh} observed that one can associate a transverse pair of contact structures $(\xi_+,\xi_-)$ tangent to any projectively Anosov flow, the one positive, the one negative. This is explicitly given by
$$ \xi_+ = \Delta^{+} \oplus \langle X \rangle \ , \ \xi_-= \Delta^{-} \oplus \langle X \rangle,$$
where $\Delta^{+}, \Delta^{-} \subseteq E^{ss} \oplus E^{uu}$ denote the diagonal and an anti-diagonal with respect to a metric for which all line fields are orthogonal\footnote{To be precise this will only be of class $C^0$, so one needs to smoothen these line fields.}. Any such pair is called a {\it bi-contact structure} and up to deformation through such pairs it is well defined for a fixed flow. Finally a projectively Anosov flow is equivalent to the existence of a bi-contact structure  that is tangent to the flow (cf.\ \cite[Proposition 2.2.3]{ETh}). 

\subsection{Explicit $1$-Forms}
Let $\eta_s,\eta_u$ be 1-forms of class $C^1$ that are dual to $\mathcal{F}_u,\mathcal{F}_s$. Then by making appropriate choices (cf.\ \cite[Lemma 3.5]{Mas}) the bi-contact structure associated to an Anosov flow is given by
$$\xi_\pm = \text{Ker}(\eta_s \mp \eta_u).$$
Then note that pushing forward under the Legendrian flow given by $X$ we obtain
$$\lim_{t \to \infty}(X_t)_*\xi_{\pm} = \pm\mathcal{F}_u \ , \ \lim_{t \to \infty}(X_t)_*\xi_+ = \mathcal{F}_s.$$
Where the (co)orientations coincide for the negative limit and are opposite in forward time. In particular, we obtain a `Front-Back' completion of $\widetilde{M} \cong \mathcal{P} \times (-\infty,\infty)$ by adding front and back at infinity
$$\mathcal{P} \times [-\infty,\infty]$$
and the bi-contact structure extends (continuously).
\subsection{Bifoliated plane and cones}\label{subsection:bifol_cone}
In terms of the bifoliated plane, if we project the contact structures, then we see that the positive contact structure maps to the negative quadrants, where $Q_+ := \{{\color{blue}x}{\color{red}y} > 0\}$ and the negative contact structure maps to the positive quadrants $Q_- := \{{\color{blue}x}{\color{red}y} < 0\}$. Here the color-coded coordinates correspond to the local coordinates given by the bifoliation.

Furthermore, if $\pi\colon \widetilde{M} \to \mathcal{P}$ is the projection, then taking the derivative we get embeddings
$$\pi_{\pm} \colon \widetilde{M} \longrightarrow \mathbb{P} (T \mathcal{P}) \ , \ p \longmapsto d\pi (\xi_{\pm}).$$
In particular, $\xi_{\pm} = \pi_{\pm}^*\xi_{can}$ are given via pulling back the canonical contact structure on the projectivised tangent bundle, which can be identified with the unit tangent bundle after choosing a metric. 

 This then gives an alternate proof of the well known fact going back to the work of Eliashberg-Thurston \cite{ETh} that the contact structures $\xi_\pm$ are universally tight, purely in terms of the bifoliated plane. They are furthermore symplectically fillable and hence have no Giroux torsion (cf.\ \cite{BBM}):
\begin{theorem}[Anosov implies universally tight]\label{thm:Anosov_tight}
    The contact structures $\xi_\pm$ associated to an Anosov flow are universally tight and strongly symplectically fillable. In particular, they have no Giroux torsion.
\end{theorem}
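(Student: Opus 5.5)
The plan is to establish the three assertions of Theorem~\ref{thm:Anosov_tight} in order: universal tightness, then strong symplectic fillability, then absence of Giroux torsion.

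\textbf{Universal tightness.} I will use the description from Subsection~\ref{subsection:bifol_cone}. On the universal cover, $\widetilde{M}\cong \mathcal{P}\times\R$ with $\mathcal{P}\cong\R^2$. The maps $\pi_\pm\colon \widetilde M\to \mathbb{P}(T\mathcal{P})$ are immersions: along a flow line the plane $d\pi(\xi_\pm)$ rotates strictly monotonically in the fiber, since it converges to $\mathcal{F}_u$ or $\mathcal{F}_s$ without crossing them. They are in fact embeddings, because each flow line maps injectively into an open interval of the fiber $\mathbb{P}(T_p\mathcal{P})$, namely the quadrant cone $Q_\mp$. Moreover $\xi_\pm=\pi_\pm^*\xi_{can}$.

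Lifting further to the universal cover of $\mathbb{P}(T\mathcal{P})\cong \R^2\times S^1$, which is $\R^2\times\R$, the map becomes an embedding of $(\widetilde M,\widetilde\xi_\pm)$ into $(\R^3,\xi_{std})$. Indeed $\xi_{can}$ on $T^*$ or the projectivised tangent bundle of the plane lifts to $\ker(\cos\theta\,dx-\sin\theta\,dy)$ on $\R^3$, which is contactomorphic to the standard structure. By Bennequin's theorem~\cite{Be}, $(\R^3,\xi_{std})$ is tight, and tightness passes to open subsets. Hence $\widetilde\xi_\pm$ is tight, so $\xi_\pm$ is universally tight. For $\xi_-$ one reverses orientations throughout.

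\textbf{Strong fillability.} I will follow Mitsumatsu's construction (see \cite{Mit,BBM}). Choose the $C^1$ forms with $\xi_\pm=\ker\alpha_\pm$, where $\alpha_\pm=\eta_s\mp\eta_u$. On $M\times[-1,1]_t$, consider
$$\omega=d\big((1-t)\alpha_-+(1+t)\alpha_+\big)$$
or the corresponding variant. Because $\alpha_+$ and $\alpha_-$ define transverse contact structures of opposite signs with the right volume properties, the Anosov expansion/contraction makes $\alpha_+\wedge d\alpha_-$ and $\alpha_-\wedge d\alpha_+$ have suitable signs; this is Hozoori's and Mitsumatsu's "Liouville pair" criterion. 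Then $\omega$ is symplectic and exact, so $M\times[-1,1]$ is a Liouville cobordism from $(M,\xi_-)$ reversed to $(M,\xi_+)$, i.e.\ an exact filling of $(M,\xi_+)\sqcup(-M,\xi_-)$.

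This is the main obstacle. The sign conditions require choosing $\eta_s,\eta_u$ adapted to the expansion rates, using an adapted metric and smoothing the $C^1$ forms. For a general Anosov flow one may need Hozoori's characterization rather than the naive forms. Once the weak filling exists, a standard gluing argument lets one cap off one boundary component and obtain a strong filling of each of $\xi_\pm$ separately. If capping proves problematic, it suffices to cite \cite{BBM}.

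\textbf{No Giroux torsion.} This follows from Gay's theorem: a contact manifold with positive Giroux torsion is not strongly fillable. Alternatively, for atoroidal $M$ it follows directly from universal tightness, since any embedded torsion layer is compressible there and would give meridional slopes, hence an overtwisted disk, by the discussion after Theorem~\ref{theorem: generation}.
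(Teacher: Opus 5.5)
Your argument follows the paper's route: universal tightness comes from the embedding $\pi_\pm\colon \widetilde M\to\mathbb{P}(T\mathcal{P})$ with $\xi_\pm=\pi_\pm^*\xi_{can}$, lifted into the standard tight $\R^3$ where Bennequin's theorem applies; strong fillability is taken from the literature; and vanishing Giroux torsion follows from fillability via Gay's theorem. As in the paper, the fillability step rests on the citation (\cite{BBM}, via Mitsumatsu and Hozoori) rather than on your sketch, whose sign heuristic is not right as stated. The cross terms $\alpha_+\wedge d\alpha_-$ and $\alpha_-\wedge d\alpha_+$ measure the transverse volume distortion of the flow and need not have a definite sign for flows that do not preserve volume; only the combinations at $t=\pm1$ need to be positive. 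Also, the paper's remark that Lemma~\ref{lem_Giorux_Normal} gives torsion-freeness without holomorphic curves extends your atoroidal alternative to arbitrary $M$.
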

\begin{remark}[Vanishing Giroux Torsion]
The fact that fillability implies the vanishing of Giroux torsion does use the theory of $J$-holomorphic curves, but in fact the results we prove below -- specifically Lemma \ref{lem_Giorux_Normal} -- imply directly that any contact structure arising from a bi-contact structure of an Anosov flow must have vanishing Giroux torsion.
\end{remark}

\subsection{Orbit Equivalence and Contact Structures} 
The natural notion of equivalence of Anosov flows is {\it orbit equivalence} where two flows are equivalent if there is a homeomorphism preserving flow lines. We say that the flows are  related by an {\it isotopy orbit equivalence}, and are {\it isotopy orbit equivalent}, if the orbit equivalence is isotopic to the identity. It is not {\em a priori} clear how such orbit equivalences act on the associated contact structures. This was however recently resolved and we have
\begin{theorem}[\cite{BoM} Theorem D] \label{thmintro:anosovbicontact}
    Let $X_0$ and $X_1$ be two Anosov flows on $M$ supported by bi-contact structures $(\xi^0_-, \xi^0_+)$ and $(\xi^1_-, \xi^1_+)$, respectively. If $\Phi_0$ and $\Phi_1$ are orbit equivalent, then $(\xi^0_-, \xi^0_+)$ and $(\xi^1_-, \xi^1_+)$ are deformation equivalent through bi-contact structures. 
    
    More precisely, if $h \colon  M \rightarrow M$ is an (oriented) orbit equivalence between $X_0$ and $X_1$, then $h$ is isotopic to a smooth diffeomorphism $\widetilde{h} : M \rightarrow M$ such that $\big(\widetilde{h}_*(\xi^0_-), \widetilde{h}_*(\xi^0_+)\big)$ and $(\xi^1_-, \xi^1_+)$ are homotopic through bi-contact structures.
\end{theorem}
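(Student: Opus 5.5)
The plan is to reduce the statement to a local, convex interpolation problem by working with a single flow up to smooth conjugacy, and then to exploit the fact that bi-contact structures supporting a fixed flow form a contractible space.

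\textbf{Step 1: the fixed-flow case.} First I would show that for one projectively Anosov vector field $X$, the set of bi-contact pairs $(\xi_-,\xi_+)$ with $X\in\xi_-\cap\xi_+$ is connected, in fact contractible. Writing $\xi_\pm=\mathrm{Ker}(\eta_s\mp\eta_u)$ as in the explicit $1$-forms subsection, any supporting pair corresponds pointwise to a choice of a line in the open cone $Q_+$ (respectively $Q_-$) of $TM/\langle X\rangle$. The contact condition is then an open condition that is preserved under pushing forward by $X_t$. The idea is to push two supporting pairs forward by $X_t$ for a large time $t$. By the limits $\lim_{t\to\infty}(X_t)_*\xi_\pm=\pm\mathcal F_u$, the push-forwards become uniformly close to the weak unstable direction from inside the same cone. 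Once they are that close, the straight-line homotopy inside the cone stays contact, and flowing back gives the required homotopy of pairs. The consequence is that the bi-contact class depends only on the flow, and only up to homotopy through projectively Anosov flows.

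\textbf{Step 2: making $h$ smooth.} Next, given an oriented orbit equivalence $h$, I would replace it by an isotopic diffeomorphism $\widetilde h$ whose effect on the flows is controlled. Concretely:
\begin{itemize}
\item First slide $h$ along orbits, for instance by averaging the time parameter along flow lines, so that $h$ becomes a flow-line-preserving homeomorphism that is smooth in the flow direction.
\item Then take a $C^0$-small smooth approximation $\widetilde h$, isotopic to $h$.
\item Set $X'=\widetilde h^*X_1$. This is an Anosov flow on $M$ that is orbit equivalent to $X_0$ via a homeomorphism $C^0$-close to the identity.
\end{itemize}
It then suffices to connect $X_0$ and $X'$ through projectively Anosov flows, since the family of bi-contact structures is then produced by Step 1 together with openness.

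\textbf{Step 3: interpolation, the main obstacle.} Here $C^0$-closeness is far from enough: the weak foliations of $X_0$ and $X'$ correspond to each other only topologically. Lifted to $\widetilde M$, both flows do have the same bifoliated plane $\mathcal P$ with the same $\pi_1$-action, but the smooth structures coming from $\pi_\pm\colon\widetilde M\to\mathbb P(T\mathcal P)$ differ. What I would try first is the following.
\begin{itemize}
\item Push $\xi_\pm^0$ forward by $X_0$, and $\widetilde h^*\xi_\pm^1$ by $X'$, for a long time. Each pair then becomes close to its own weak foliations.
\item Use the conjugacy between these foliations to find a common transverse pair of cone fields along a path of vector fields $X_s$, where each $X_s$ is obtained by pushing $X_0$ along the fibres of $\widetilde M\to\mathcal P$ toward $X'$.
\end{itemize}
I would check the projectively Anosov (dominated splitting) condition for $X_s$ via the cone criterion on $TM/\langle X_s\rangle$, which is open and transverse to the interpolation. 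The delicate point is to obtain uniform cone invariance along the whole path, since the orbit equivalence is only H\"older.
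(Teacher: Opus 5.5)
The paper contains no proof of this statement. It quotes it from Bowden--Massoni \cite{BoM} and uses it only as a black box, to get (a)$\Rightarrow$(c) in Theorem~\ref{theorem: classification}. Judged on its own, your proposal has a genuine gap, and it sits exactly where the content of the theorem lies. Steps~1 and~2 reduce the claim to the following: \emph{two Anosov flows $X_0$ and $X'=\widetilde h^*X_1$ that are orbit equivalent through a homeomorphism $C^0$-close to the identity are homotopic through projectively Anosov flows.} That is not a simplification; it is the theorem itself, since the paper notes that (c)$\Leftrightarrow$(d) is immediate. The $C^0$-closeness gained in Step~2 buys nothing, because the projectively Anosov condition is open only in the $C^1$ topology. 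Since $\widetilde h$ is merely $C^0$-close to $h$, $d\widetilde h$ and hence $X'$ are uncontrolled relative to $X_0$. Only the orbits shadow each other, and for an arbitrary $C^0$ approximation even the line fields $\langle X'\rangle$ and $\langle X_0\rangle$ need not be close. A straight interpolation $(1-s)X_0+sX'$ may vanish or lose domination, and no interpolation built from $C^0$ data has any reason to stay projectively Anosov.

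Your Step~3 does not supply the missing control. ``Pushing $X_0$ along the fibres of $\widetilde M\to\mathcal P$'' does not define a path towards $X'$: those fibres are the $X_0$-orbits themselves. The orbit spaces of the two flows are identified only through the orbit equivalence, which is a homeomorphism, so there is no common smooth fibration to move along. A ``common transverse pair of cone fields'' would have to be strictly invariant under the linearised flows of every $X_s$. That is first-order information, which a H\"older orbit equivalence cannot provide. Indeed $h$ cannot in general be made $C^1$: a $C^1$ orbit equivalence conjugates the Poincar\'e return maps of corresponding periodic orbits, so it preserves their eigenvalues. Orbit equivalent flows need not share these eigenvalues, e.g.\ geodesic flows of two hyperbolic metrics with different length spectra. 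So the ``delicate point'' you flag is the whole theorem. What is missing is a mechanism that turns the purely topological data preserved by $h$ (the weak foliations, equivalently the bifoliated plane with its $\pi_1(M)$-action) into a smooth homotopy of contact pairs, rather than an interpolation of vector fields.

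A smaller point: Step~1 is correct, but not for the reason you give. $C^0$-closeness to $\mathcal F^u$ after flowing does not control the contact condition, which is a derivative condition along $X$. The real reason is linearity. Write $\xi=\ker(\alpha_1+f\alpha_2)$ with $\ker\alpha_1=E^{ws}$, $\ker\alpha_2=E^{wu}$ and $\mathcal L_X\alpha_i=a_i\alpha_i$. Then $\xi$ is contact if and only if $X(f)+(a_2-a_1)f$ has a fixed sign, a condition linear in $f$. Hence convex combinations of the coefficients $f$ stay contact with no pushing needed. Convex combinations of arbitrary defining forms, by contrast, need not.
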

We then have the following important consequence:
\begin{corollary}[Invariance of Contact Structures]
    If two Anosov flows $X,X'$ are orbit equivalent then the contact structure $\xi_\pm$ and $\xi'_\pm$ are contactomorphic.
\end{corollary}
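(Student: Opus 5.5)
The corollary follows directly from Theorem \ref{thmintro:anosovbicontact} combined with Gray stability. Let $h\colon M\to M$ be an orbit equivalence between $X$ and $X'$, which we may take to be orientation preserving as in that theorem. Theorem \ref{thmintro:anosovbicontact} gives a diffeomorphism $\widetilde{h}$, isotopic to $h$, together with a path $(\xi^s_-,\xi^s_+)_{s\in[0,1]}$ of bi-contact structures. This path joins $\big(\widetilde h_*\xi_-,\widetilde h_*\xi_+\big)$ to $(\xi'_-,\xi'_+)$.

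Forgetting one component of the pair, $(\xi^s_+)_{s\in[0,1]}$ is a smooth path of positive contact structures on the closed manifold $M$. By Gray's stability theorem there is an isotopy $\psi_s$ with $\psi_0=\mathrm{id}$ and $(\psi_s)_*\xi^0_+=\xi^s_+$. Hence $\psi_1\circ\widetilde h$ is a contactomorphism from $\xi_+$ to $\xi'_+$. The same argument applied to the negative components gives a contactomorphism $\phi_1\circ\widetilde h$ from $\xi_-$ to $\xi'_-$. In fact both maps are isotopic to $h$, so when $h$ is isotopic to the identity the contact structures are isotopic. This is the sharper form needed later for Theorem \ref{theorem: classification}.

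The only point needing care is regularity. The bi-contact structures in Theorem \ref{thmintro:anosovbicontact} are smoothings of $C^0$ or $C^1$ objects, so I would check that the homotopy supplied there is, or can be perturbed to be, a smooth family of contact structures. Contact conditions are open, so a $C^1$-small smoothing of the family preserves both the contact property and the transversality. After that, Gray stability applies verbatim.

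If $h$ reverses orientation, it exchanges the signs of the two contact structures. I would exclude this case, as the statement of Theorem \ref{thmintro:anosovbicontact} does, by working with oriented orbit equivalences.
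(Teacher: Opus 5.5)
Your proposal is correct and follows the route the paper intends. The paper gives no written proof and presents the corollary as an immediate consequence of Theorem \ref{thmintro:anosovbicontact}; applying Gray stability separately to each component of the bi-contact homotopy gives the contactomorphisms $\psi_1\circ\widetilde h$ and $\phi_1\circ\widetilde h$, both isotopic to $h$, and your observation that the structures are isotopic when $h$ is isotopic to the identity is what the implication (a)$\Rightarrow$(b) of Theorem \ref{theorem: classification} needs.
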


A key characterization of Anosov flows up to orbit equivalence that we will use is due to Barthelm\'e, Frankel and Mann.

\begin{theorem}\label{theorem: BFM}\cite[Theorem 1.1]{BFM} On a hyperbolic closed $3$-manifold, an Anosov flow is completely determined, up to isotopy orbit equivalence, by the set of non oriented free homotopy classes of its periodic orbits.
\end{theorem}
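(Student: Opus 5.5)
Since Theorem~\ref{theorem: BFM} is quoted from \cite{BFM}, we only sketch how one could prove it. The approach is to pass to the bifoliated plane. By the work of Barbot and Fenley, two Anosov flows $X,X'$ on $M$ are isotopy orbit equivalent as soon as there is a $\pi_1(M)$-equivariant homeomorphism $\mathcal{P}\to\mathcal{P}'$ between their flow spaces. Such a map lifts to a map $\widetilde M\to\widetilde M$ sending orbits to orbits, and equivariance makes it descend to an orbit equivalence homotopic to the identity. On a hyperbolic manifold, homotopy implies isotopy by Gabai's rigidity results. So the task is to reconstruct the action of $\pi_1(M)$ on $\mathcal{P}$ from a single piece of data: the set $\mathcal{C}$ of elements $g\in\pi_1(M)$ that fix a point of $\mathcal{P}$. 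Since $g$ and $g^{-1}$ fix the same points, these are exactly the unoriented free homotopy classes of periodic orbits.

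\textbf{Step 1: understanding fixed points.} For $g\in\mathcal{C}$, the fixed set $\mathrm{Fix}(g)\subset\mathcal{P}$ is discrete. It is either a single point or the chain of corners of a string of adjacent lozenges. On a hyperbolic manifold there are no $\Z^2$ subgroups, so such chains are finite and contain no scalloped regions. Using that $X$ is transitive (which we would establish or assume as in \cite{BFM}), periodic points are dense in $\mathcal{P}$.

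\textbf{Step 2: an equivariant correspondence.} Given $X'$ with the same set $\mathcal{C}$, we would define an equivariant bijection $\Phi$ between the dense sets of periodic points. When $\mathrm{Fix}(g)$ is a single point, we send it to $\mathrm{Fix}'(g)$. When $\mathrm{Fix}(g)$ is a lozenge chain, we match the corners using auxiliary elements whose fixed points determine them. Equivariance follows from the relation $\mathrm{Fix}(hgh^{-1})=h\,\mathrm{Fix}(g)$.

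\textbf{Step 3: preserving the bifoliated structure (main obstacle).} We must show that $\Phi$ preserves the combinatorics of the bifoliation: whether two periodic points lie in a common product rectangle, and the order of leaves. Only then does $\Phi$ extend continuously to a homeomorphism. The idea I would try first is to detect these configurations purely through membership in $\mathcal{C}$. If the fixed points of $g$ and $h$ have leaves that cross suitably (forming a Markov-type rectangle), then products such as $g^n h^m$ for large $n,m$ have a fixed point, by a closing/ping-pong argument on the rectangle. In non-crossing configurations, one should instead be able to show that such products act freely. Once incidence of rectangles is recoverable from $\mathcal{C}$, $\Phi$ maps rectangles to rectangles. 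Density of periodic points then gives a homeomorphism $\mathcal{P}\to\mathcal{P}'$ carrying leaves to leaves, which concludes the argument. I expect this step, making the fixed-point criterion exactly characterise the rectangle-incidence relation including near lozenges and non-$\R$-covered branching, to be the hardest part.
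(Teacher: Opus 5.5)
\textbf{Same route as the paper, at the level of citation.} The paper does not prove this statement. It imports it from \cite{BFM} and only checks that the hypotheses of Barthelm\'e--Frankel--Mann's theorem hold on a closed hyperbolic manifold. First, there are no trees of scalloped regions, since these require $\Z^2$ subgroups of $\pi_1(M)$. Second, every Anosov flow is transitive, by Brunella's theorem \cite{Br} that a non-transitive Anosov flow admits an incompressible torus transverse to it. Your opening sentence takes the same route, and your remark on $\Z^2$ subgroups covers the first point. Transitivity, however, cannot be ``assumed as in \cite{BFM}'': the statement has no such hypothesis, so you must derive it from atoroidality via Brunella.

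\textbf{As a standalone proof, the sketch has a genuine gap.} The gap sits exactly where the content of \cite{BFM} lies.

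\emph{Step 2.} You send the unique fixed point of $g$ in $\mathcal{P}$ to ``the'' fixed point of $g$ in $\mathcal{P}'$. But $\mathcal{C}$ only records \emph{whether} $g$ has a fixed point, not how many. Nothing you say rules out $\mathrm{Fix}(g)$ being a single point while $\mathrm{Fix}'(g)$ is a chain of lozenge corners, or the two chains having different combinatorics. Recognising corner points, and matching corners, from $\mathcal{C}$ alone is part of what has to be proved. ``Auxiliary elements whose fixed points determine them'' does not say how. Incidentally, the lozenges in such a chain need only share corners, not sides.

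\emph{Step 3, first gap.} You justify only the easy direction: a rectangle containing fixed points of $g$ and $h$ forces fixed points of suitable products $g^nh^m$, by a Markov/ping-pong argument. The converse is left as ``one should be able to show''. That converse says the corresponding products act freely in every non-rectangle configuration: perfect fits, points at corners of lozenges, and non-separated leaves in the non-$\R$-covered case. Without it, $\Phi$ need not preserve the rectangle relation.

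\emph{Step 3, second gap.} Even granting that incidence is recovered, extending $\Phi$ from a dense set to a homeomorphism needs more than incidence. It requires $\Phi$ to preserve relative position (which quadrant, which side of a leaf), in a plane whose leaf spaces may be non-Hausdorff.

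So your sketch is a reasonable outline of a reconstruction strategy, but the step that makes the theorem true is missing. As written, the statement rests entirely on the citation, exactly as it does in the paper.
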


This statement takes into account the fact that on a hyperbolic manifold, there are no {\it tree of scalloped regions}, see \cite{BFM}, and that every Anosov flow on any closed atoroidal manifold is transitive by a result of Brunella \cite{Br}. The Theorem also remains true for transitive flows if all JSJ-pieces are hyperbolic or if all Seifert pieces are {\bf free} in the sense of Barbot-Fenley as well as for pseudo-Anosov flows. 
\begin{remark}[Finite Index Subgroups and Orientations]\label{rem:Finite_Index}
    Let $\gamma\in \pi_1(M)$ be primitive element. A key observation is that a power $\gamma^k \in \pi_1(M)$ is represented by a periodic orbit (possibly non simple) if and only if $\gamma$ or $\gamma^{-1}$ is. This is a manifestation of the fact that $\gamma$ is periodic if and only if its action on $\mathcal{P}$ has a fixed point. In particular, this means that the periodic orbit data of a flow is uniquely determined by that on any finite index subgroup $\Gamma \subseteq \pi_1(M)$. 

    Thus by taking an appropriate cover (of fixed degree) corresponding the kernel of the natural projection $\pi_1(M) \to H_1(M,\Z_2)$, we can assume that all stable/unstable bundles are orientable and that the flow respects the orientations.
\end{remark}
\subsection{Dynamics of Elements on $\mathcal{P}$}
The bifoliated plane associated to any (pseudo)-Anosov flow can be compactified to a disk $\mathcal{P} \cup \partial \mathcal{P}$ in a natural way, so that the action of $\pi_1(M)$ extends. One then has the following trichotomy for how elements can act. For cleanliness we assume that all actions are orientation preserving:
\begin{theorem}[Proposition 3.6 \cite{BFM}]\label{thm:dynamics_bi_plane}
    Let $\gamma \in \pi_1(M)$. Then precisely one of the following holds:
    \begin{itemize}
        \item[(i)] (Fixed Points) There is a fixed point in $\mathcal{P}$ and the element $\gamma$ is said to be periodic.
        
        \item[(ii)] (No Fixed Points: Loxodromic) There are no fixed points in $\mathcal{P}$ and precisely two on $\partial \mathcal{P}$ and the action is (topologically) conjugate to a loxodromic M\"obius transformation in $PSL(2,\R)$. 
        \item[(ii)'] (No Fixed Points: Parabolic) There are no fixed points in $\mathcal{P}$ and precisely one on $\partial \mathcal{P}$ and the action is (topologically) conjugate to a parabolic M\"obius transformation in $PSL(2,\R)$.

        \item[(iii)] (No Fixed Points: Scalloped) There are no fixed points in $\mathcal{P}$, $\gamma$ preserves a scalloped region and the number of fixed points on $\partial \mathcal{P}$ is 4.
    \end{itemize}
    In particular, in cases (ii)' and (iii) there are infinite sequences of non-separated leaves which cannot occur if $M$ is atoroidal or if the flow is skew $\R$-covered. 
\end{theorem}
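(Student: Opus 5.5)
The plan is to work on the compactification $\overline{\mathcal{P}}=\mathcal{P}\cup\partial\mathcal{P}$, which is a closed disk on which $\pi_1(M)$ acts by homeomorphisms preserving the pair of foliations. We distinguish cases according to the fixed point set of $\gamma$. If $\gamma$ fixes a point of $\mathcal{P}$, we are in case (i) and there is nothing to prove. Otherwise, Brouwer's fixed point theorem on the closed disk gives at least one fixed point, which must then lie on $\partial\mathcal{P}$. Moreover, the Brouwer plane translation theorem shows that $\gamma$ acts on $\mathcal{P}\cong\R^2$ with every orbit escaping to infinity. It therefore remains to count and describe the fixed points of $\gamma|_{\partial\mathcal{P}}$ and to identify the dynamics.

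The first key step is to relate boundary fixed points to invariant leaves. Two observations drive this.
\begin{itemize}
\item Since a stable and an unstable leaf meet in at most one point, $\gamma$ cannot fix both a stable leaf and an unstable leaf that intersect; otherwise their intersection point would be fixed.
\item Each fixed point of $\partial\mathcal{P}$ is either an ideal endpoint of an invariant leaf or a limit of a nested sequence of leaves whose images under $\gamma$ are again nested. This uses the way the boundary circle is built out of ends of leaves.
\end{itemize}
One then argues as follows.
\begin{itemize}
\item If $\gamma$ preserves no leaf of either foliation, the nesting argument produces exactly one or exactly two fixed points on $\partial\mathcal{P}$, and $\gamma$ has attracting/repelling (north--south) dynamics on $\partial\mathcal{P}$. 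This is case (ii) when there are two fixed points, and case (ii)' when there is one.
\item If $\gamma$ preserves some leaf $l$, then $\gamma$ translates along $l$, so the two ends of $l$ are fixed. The transverse leaves crossing $l$ are shifted by $\gamma$.
\item If, in addition, the family of leaves crossing $l$ accumulates onto a union of non-separated leaves that $\gamma$ permutes, then we obtain an invariant scalloped region. Its four ideal corners give exactly four fixed points, which is case (iii).
\item If no such accumulation occurs, we are back to two fixed points with loxodromic dynamics.
\end{itemize}

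To get the topological conjugacy to a M\"obius transformation in cases (ii) and (ii)', proceed in two steps. First, take a circle homeomorphism with one or two fixed points and north--south dynamics; it is conjugate to a parabolic or hyperbolic element of $PSL(2,\R)$ acting on $S^1$. Second, extend this conjugacy over the disk. Because $\gamma$ acts freely and properly on $\mathcal{P}$ with the same asymptotics, the extension can be built on fundamental domains, using a Kerékjártó-type argument. The final clause follows because cases (ii)' and (iii) force infinite families of pairwise non-separated leaves. Such families are invariant under a $\Z^2$-type subgroup, which yields an essential torus, and no such family exists in a skew $\R$-covered plane, whose leaf spaces are Hausdorff.

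The main obstacle I expect is the middle step: proving that there are at most four boundary fixed points and that they occur only in these configurations. The difficulty is that boundary points of $\overline{\mathcal{P}}$ are not all ends of single leaves; some are limits of chains of non-separated leaves. I would first try to settle this by analysing the fixed points of $\gamma$ on the leaf spaces of $\mathcal{F}_s$ and $\mathcal{F}_u$, which are non-Hausdorff $1$-manifolds, and then transferring that information to $\partial\mathcal{P}$.
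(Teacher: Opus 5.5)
The paper does not prove this statement. It is quoted from Barthelm\'e--Frankel--Mann (their Proposition 3.6), so your sketch has to stand on its own. It has a genuine gap at exactly the step that carries the content: your case split is by whether $\gamma$ preserves a leaf, and that split is wrong.

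For a (pseudo-)Anosov flow, the stabiliser in $\pi_1(M)$ of a leaf of $\mathcal{F}^s$ or $\mathcal{F}^u$ in $\mathcal{P}$ is the fundamental group of the corresponding weak leaf in $M$. That group is trivial or generated by the class of the unique periodic orbit in the leaf, and this element fixes the corresponding point of the leaf. Hence an element acting freely on $\mathcal{P}$ preserves no leaf at all. Your branch ``$\gamma$ preserves $l$ and translates along it'' is therefore empty and cannot be where case (iii) comes from.

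Conversely, every element in case (iii) falls into your ``no invariant leaf'' branch. Such elements do exist, e.g.\ an element of the $\Z^2$ stabilising a scalloped region that fixes none of the lozenge corners on its boundary. In that branch you assert that the nesting argument gives exactly one or two boundary fixed points with north--south dynamics. That assertion is false as stated. Nothing else in the proposal bounds the number of boundary fixed points or ties four of them to a scalloped region, and that is the whole proposition, as your last paragraph concedes.

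What is needed is the analysis you only gesture at. A fixed point free $\gamma$ acts freely on the leaf spaces $\Lambda^s,\Lambda^u$ and has an invariant axis in each. One must then show, using Fenley's structure results on non-separated leaves, that exactly one of two things happens:
\begin{enumerate}
\item the ends of the axes shrink to single ideal points, giving exactly two fixed points, one attracting and one repelling;
\item the axes run through a $\gamma$-invariant scalloped region, whose four ideal corners are the fixed points.
\end{enumerate}

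Two further steps are also unsupported.
\begin{enumerate}
\item Brouwer's plane translation theorem only says that orbits escape to infinity. It does not give that $\langle\gamma\rangle$ acts properly discontinuously on $\mathcal{P}$; fixed point free plane homeomorphisms can have non-Hausdorff orbit spaces. So your Ker\'ekj\'art\'o-type extension of the boundary conjugacy over the disk needs a convergence property of $\gamma^{\pm n}$ on $\overline{\mathcal{P}}$, not just north--south dynamics on the circle.
\item You give no argument that case (ii)' forces infinitely many non-separated leaves. Your appeal to $\Z^2$-invariance addresses only case (iii).
\end{enumerate}
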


\section{The slope functions}

Let $(M,\xi)$ be a universally tight contact irreducible $3$-manifold so that the pullback $(\widetilde{M}, \widetilde{\xi}) \cong (\R^3,\xi_{st})$ is the (standard) tight contact structure.

\subsection{$\Z$-covers} If $\gamma \in \pi_1(M)$ is non-trivial, we consider the infinite
cover associated with $\gamma$
$$(S^1\times \R^2,\xi^\gamma) \cong (\widetilde{M}/\langle \gamma \rangle, \xi^\gamma) \longrightarrow (M,\xi).$$
Here $\xi^\gamma$ denotes the pulled back contact structure. We set $M^\gamma:=\widetilde{M}/\langle \gamma \rangle$.


\subsection{Asymptotic slopes} Given an exhaustion of $(M^\gamma,\xi^\gamma)$ by concentric solid tori $T_n$, whose boundary $2$-tori $\partial T_n$ are convex, we consider the sequence of dividing curves $\Gamma_{\partial T_n}$ on  $\partial T_n$. Since $\xi^\gamma$ is tight, $\Gamma_{\partial T_n}$ consists of parallel essential curves in $\partial T_n$. 
If we fix  a common longitude $l$ for the tori $\partial T_n$, the direction of every dividing set $\Gamma_{\partial T_n}$ can be detected by an angle $\theta_n(\xi^\gamma,l) \in (-\pi/2, \pi/2)$. Here $0$ stands for the direction of the longitude and $\pm \pi/2$ for the direction of the meridian; the latter meridian slopes are not reached since the structure is tight. 
Note that we take the basis given by $(longitude,meridian)$, which is the reverse of the usual $(meridian,longitude)$ one.
We consider the associated sequence of {\it slopes}, defined as
$$s_n(\xi^\gamma,l):= \tan(\theta_n(\xi^\gamma,l)).$$

By tightness, this sequence is monotone, 
non-increasing if $\xi$ is positive and non-decreasing if $\xi$ is negative, see \cite{Gi1}, so that it has a limit in $\R\cup\{\pm\infty\}$ as $n\to \infty$; alternately see Makar-Limanov \cite{M-L}.  Moreover, again by monotonicity of the slopes for concentric tori, this limit does not depend on the exhausting sequence.
We call it the {\bf asymptotic slope of $\xi^\gamma$} and denote it $Sl_{\xi} (\gamma,l)$, where $l$ recalls the choice of a longitude. We will suppress $l$ from the notation whenever the choice is understood.

We will say that $\xi^\gamma$ has {\bf constant slope at infinity} if there is an exhausting sequence of tori which have constant slope.
Now, if $\xi$ and $\xi'$ are two universally tight contact structures on $M$, we notice that the identity $Sl_{\xi} (\gamma,l)=Sl_{\xi'}(\gamma, l)$ does not depend on the choice of the longitude $l$, hence the intrinsic notion of having the same slope.

 \begin{remark}
     Note that if $\xi$ was (virtually) overtwisted the sequence of slopes would have a priori no limit as the dividing set might keep on rotating indefinitely.
 \end{remark}

 Elaborating a little more, we can also consider the asymptotic slope modulo the integers to get a quantity that does no longer depend on the choice of $l$, that 
we denote $\overline{Sl}_\xi (\gamma)$. We thus get a {\bf slope map} $$\overline{Sl}_\xi :\pi_1 (M) \to \R/\Z \cup \infty.$$ 
It depends on $\gamma$ only up to free homotopy. This map satisfies a homogeneity property on cyclic subgroups: We have $\overline{Sl}_\xi(\gamma^n)=  \vert n\vert \overline{Sl}_\xi(\gamma)$ for any integer $n\in \Z$.

\begin{remark}\label{remark: relative} An even more interesting way to get rid of the choice of a longitude is when we have a pair of universally tight structures. In that case, we can take the limit slope of one, provided it is not $\pm\infty$, as a reference for computing the other. The result is denoted $Sl_{\xi_1} (\gamma,\xi_2)$, when $\xi_2$ is our reference. It would be interesting to investigate the properties of this {\bf relative slope map} 
$$Sl_{\xi_1} (.,\xi_2): \pi_1(M) \to\R\cup\{\pm \infty\}$$
when $\xi_1$ and $\xi_2$ are two universally tight contact structures (e.g. a positive and negative deformation of a taut foliation). For example one has again $Sl_{\xi_1} (\gamma^n,\xi_2)=\vert n\vert Sl_{\xi_1} (\gamma,\xi_2)$.
\end{remark}

We now give several basic examples.
\begin{example}[Standard contact tubes]
    Let $\xi = \mathrm{Ker}(d\theta + r^2d\varphi)$ on $S^1_\theta \times \R^2$ where we take polar coordinates $(r,\varphi)$ on $\R^2$.
     The asymptotic slope on the open tube $S^1 \times \mathring{D}_{R}$ of radius $R = r^2$ is then $-\frac{\partial \varphi}{\partial \theta}= \frac{1}{R}$. 
\end{example}
\begin{example}[Canonical contact tubes]\label{ex:can_con_tube}
    Let $\xi_{can} = \mathrm{Ker}(\cos(\theta)dx - \sin(\theta)dy)$ be the canonical contact structure on $U^* \R^2 \cong S^1_\theta \times \R^2$.  Then the characteristic foliation on $\partial (S^1 \times D_{R})$ is independent of $R$, as one just rescales the contact form by dilating in the $(x,y)$-plane. Moreover, we parametrise $\partial (S^1 \times D_{R})$ by $$(\theta, \varphi) \longmapsto (\theta, \cos(\varphi),\sin(\varphi))$$
    so that the characteristic foliation is given by 
   $$(\cos(\theta)\sin(\varphi) + \sin(\theta)\cos(\varphi))d\varphi = 0$$
    This is then ruled by vertical $S^1_\theta$-fibers and has tangency locus  along the Legendrian curves given in $(\theta,\varphi)$-coordinates by 
    $$L_0 = \{\theta = -\varphi \} \ , \ L_\pi = \{\theta = -\varphi + \pi \} .$$
    Thus the slope is $1$ for the longitude given by the fiber direction and for the open solid torus it is constant at infinity. Taking a $k$-fold cover in the $\theta$-factor gives tubes with slope $k$.
    
    Note that making a small perturbation given by pushing along a Legendrian vector field $X= \sin(\theta)\partial_x + \cos(\theta)\partial y$, we see that $\xi_{can}$ is isotopic to a (positively) transverse contact structure $\xi_{tr}$. Considering the contact structure as an Ehresmann connection, the holonomies along the boundary of a disk $D_R$ are monotonic. Let $T_n = S^1 \times D_{R_n}$ be the solid torus of radius $R_n$ and 
    let $l_{fib}$ be the longitude given by the fibers. Using our orientation conventions we get 
    $$s_n(\xi_{tr},l_{fib}) > 0. $$
For an oppositely oriented contact structure $\xi^-_{tr}$ -- given by say reflecting in $\theta$ -- we then have (again on $S^1 \times D_R$):
$$s_n(\xi^-_{tr},l_{fib}) < 0. $$
\end{example}

\begin{example}[The $3$-torus case] For $n\in \N^*$, consider the contact structures $\xi_n^\pm$ of equation $\cos(nt)dx\pm \sin(nt)dy=0$ on $T^3=(\R/\Z)^3_{(x,y,t)}$.
We let $\gamma_x :=\{ y=t=0\}$ and $\gamma_t :=\{ x=y=0\}$.
One can check that $Sl_{\xi_1^-} (\gamma_x, \xi_n^+)=0$, but $Sl_{\xi_1^-} (\gamma_t,\xi_n^+)=n+1$, that is $Sl_{\xi_1^-}$ distinguishes between the structures $\xi_n^+$, at least up to isotopy, whereas they are all deformations
of the same foliation $\{ t=C\}$.
\end{example}

Note that if the contact structure on an open solid torus extends to a $C^1$-plane field on the closed torus so that the characteristic foliation on the boundary is stable (e.g.\ Morse-Smale), then its slope is constant at infinity. The converse may however not hold: the number of dividing curves may be unbounded and the contact structure, despite having constant at infinity slope, may not extend to a solid torus.

\section{Detecting Periodic Orbits via Contact Structures}

Let $X$ be an Anosov flow with associated bi-contact pair $(\xi_+,\xi_-)$. We prove that the pair $\xi_\pm^\gamma$ of pulled-back contact structures on the infinite cover associated with $\gamma$ distinguishes between the cases when (the free homotopy class of) $\gamma^{\pm 1}$ can be realized by a periodic orbit of $X$ or not.\footnote{Note this is equivalent to the fact that $\gamma$ acts on the bifoliated plane coming from the flow with fixed points or not.} Our goal is thus to prove the following:


\begin{theorem}\label{thm: homotopy} Let $X,X'$ be Anosov flows on a closed $3$-manifold $M$ and $\gamma$ a free homotopy class of loops. Assume that 
the contact structures $\xi_\pm$ and $\xi'_\pm$ associated with $X$ and $X'$ are pairwise isotopic. Then $\gamma$ or $\gamma^{-1}$ is represented by a periodic orbit for $X$ if and only if it is for $X'$.
\end{theorem}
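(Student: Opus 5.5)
The plan is to reduce the statement to an intrinsic contact-geometric criterion on the cover $M^\gamma$. Namely: $\gamma^{\pm1}$ is represented by a periodic orbit of $X$ if and only if the asymptotic slopes $Sl_{\xi_+}(\gamma)$ and $Sl_{\xi_-}(\gamma)$ are equal, and both $\xi_+^\gamma$ and $\xi_-^\gamma$ have constant slope at infinity. This is the criterion announced in the outline. Since asymptotic slopes and constancy at infinity are invariant under isotopy of $\xi_\pm$, the theorem then follows at once. An isotopy of $\xi_\pm$ to $\xi'_\pm$ lifts to $M^\gamma$ as a diffeomorphism isotopic to a lift of the identity, so it preserves the longitude class and maps exhausting convex tori to exhausting convex tori. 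Equality of slopes is also independent of the longitude. By Remark \ref{rem:Finite_Index} we may pass to the cover where all bundles are oriented, and we may replace $\gamma$ by a primitive root.

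For the direction ``periodic $\Rightarrow$ criterion'', let $\delta$ be the periodic orbit representing $\gamma^{\pm1}$. In $M^\gamma$ it lifts to a closed orbit $\tilde\delta$, and $M^\gamma$ is a neighbourhood of $\tilde\delta$ fibering over $\mathcal{P}/\langle\gamma\rangle$ near the fixed point. Using the description $\xi_\pm=\pi_\pm^*\xi_{can}$ of Subsection \ref{subsection:bifol_cone}, I would build an explicit exhaustion by tori that are unions of flow lines over larger and larger $\gamma$-invariant regions around the fixed point $p\in\mathcal P$. I would take the regions bounded by pieces of stable and unstable leaves, forming increasing rectangles nested in the lozenges or quadrants at $p$. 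On such a flow-saturated torus, both $\xi_+$ and $\xi_-$ contain $X$. The characteristic foliations therefore have closed leaves along the flow direction, and the dividing curves are parallel to the orbit direction, with their number fixed by the quadrant structure (as in Example \ref{ex:can_con_tube}). This gives constant slope at infinity equal to the orbit class for both $\xi_+$ and $\xi_-$. Flow-saturated tori are not convex, so the step needs a $C^\infty$-small perturbation to make them convex, keeping the dividing set parallel to the flow.

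For the converse, suppose $\gamma$ acts freely on $\mathcal P$. By Theorem \ref{thm:dynamics_bi_plane} it acts loxodromically, parabolically, or preserving a scalloped region, with fixed points only on $\partial\mathcal P$. Take $\gamma$-invariant exhausting cylinders in $\mathcal P\times\R$ lying over regions of $\mathcal P$ that escape to the boundary fixed points. Pushing these regions forward and backward by the flow, and using the front-back completion, I would show that $\xi_+$ and $\xi_-$ twist in opposite directions. The slopes of $\xi_+$ strictly decrease and those of $\xi_-$ strictly increase by a definite amount on each new layer. Put another way, the Giroux normal form (Lemma \ref{lem_Giorux_Normal}) of each layer $T^2\times[n,n+1]$ is rotative for both structures, so they cannot have a common constant limit. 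The limit slopes are then either different or not attained in a constant way.

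The main obstacle is this last step. One must turn the absence of a fixed point into a quantitative rotation of the dividing curves that is strictly positive on $\xi_+$ and strictly negative on $\xi_-$, and rule out the degenerate case where both slopes converge to the same value. I would first try to relate the relative slope $Sl_{\xi_+}(\gamma,\xi_-)$ (Remark \ref{remark: relative}) to the translation behaviour of $\gamma$ on a $\gamma$-invariant transversal in $\mathcal P$, and show that it vanishes only when $\gamma$ has a fixed point.
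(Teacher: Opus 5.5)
The reduction to an isotopy-invariant slope criterion on $M^\gamma$ is the paper's, but your argument does not establish either direction of that criterion.

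\textbf{Periodic direction.} The exhaustion you describe does not exist. A compact flow-saturated torus in $M^\gamma$ would lift to $c\times\R\subset\mathcal P\times\R$, with $c$ a circle invariant under a nontrivial power of $\gamma$. For your tori, $c$ would bound a compact invariant region around $p$, and likewise a ``$\gamma$-invariant rectangle around $p$'' would have to be compact and invariant. No such region exists, because the lifted orbit is a saddle, not elliptic: $\gamma$ preserves the two leaves through $p$ and pushes one half-leaf off to infinity. In particular, no large torus carries ``closed leaves along the flow direction''.

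What works (Lemma \ref{lemma: slope0}) needs only one closed leaf. The weak unstable cylinder $C_\gamma$ through the lifted orbit $\overline{\gamma}$ is properly embedded, and on it the characteristic foliation of both $\xi_\pm^\gamma$ is the flow foliation. Exhaust $M^\gamma$ by solid tori meeting $C_\gamma$ in annuli $C_n\ni\overline{\gamma}$ with $X^\gamma$ transverse to $\partial C_n$, and cut along $C_n$. Then $\overline{\gamma}$ is a nonsingular closed leaf, attracting on one side and repelling on the other, on a torus in a tight manifold. So it lies in a complementary region of the essential parallel dividing curves. This pins the dividing slope of each half, hence of $\partial(S^1\times D_n)$, to that of $\overline{\gamma}$ for both structures.

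\textbf{Converse.} This is where the content lies, and the mechanism you propose is not what happens. In the paper's models the slopes are constant at infinity but \emph{different}; they do not rotate.
In the skew case (Lemma \ref{lem:R_covered_periodic}) one takes tori $A_n\times[-C_n,C_n]$ over compact subannuli of $A=\mathcal P/\langle\gamma\rangle$ whose boundary is transverse to both foliations. For $C_n$ large, $\xi_-$ is tangent to each side along a circle, which fixes its slope to that of $\partial A_n\times\{0\}$. Meanwhile $\xi_+$ stays transverse, and its characteristic foliation crosses that circle with constant sign, which forces a different slope.
In the Reeb model (Lemma \ref{lemma: Reeb2}) both slopes are constant, with opposite signs.
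In both models every $\xi_+^\gamma$-slope stays above every $\xi_-^\gamma$-slope. So slopes approaching each other by a definite amount on every layer, as you claim, cannot occur. Also, Lemma \ref{lem_Giorux_Normal} concerns neighbourhoods of incompressible tori in $\Z^2$-covers, and there it asserts the opposite: non-rotative, equal slopes.

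The missing ingredients are three.
First, the integer rotation number $rot_s(\gamma)$ of $\mathcal F^s$ along the core of $A$.
Second, an embedding of each compact subannulus of $A$ into a model bifoliated annulus: skew if $rot_s(\gamma)=0$, or with $2|k|$ Reeb-like crossing components if $rot_s(\gamma)=k\neq 0$. Over this model, $\xi_\pm^\gamma$ extend universally tightly.
Third, monotonicity of slopes under inclusion, which transfers the separated model slopes back to $M^\gamma$.
Your fallback via the relative slope is unproved, and is close to what the paper leaves open (Remark \ref{remark: relative}).

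\textbf{Scalloped case.} Since $M$ is arbitrary, case (iii) of Theorem \ref{thm:dynamics_bi_plane} occurs. There the paper does not use your $M^\gamma$-criterion at all. It decides periodicity of $\gamma\in A\cong\Z^2$ from the common constant slope of the Giroux normal form on the $\Z^2$-cover (Lemmas \ref{lem_Giorux_Normal} and \ref{lem_cylinder_periodic}). Your proposal has no substitute for this step.
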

\begin{remark} We are assuming the stable/unstable foliations are oriented. If not, we go to the orientation cover and use the fact that $\gamma^n$ is an orbit if and only if $\gamma$ is to deduce finiteness in general.
\end{remark}
As a direct consequence we obtain Theorem \ref{theorem: classification} from the introduction, which we restate for convenience.
\begin{theorem} On a closed hyperbolic $3$-manifold $M$, the following are equivalent:
\begin{enumerate} 
    \item[(a)] two Anosov flows $X$ and $X'$ are isotopy orbit equivalent;
    \item[(b)] the associated contact structures $\xi_+$, $\xi_+'$ and $\xi_-$, $\xi_-'$ are pairwise isotopic;
    \item[(c)]  the contact pairs $(\xi_+,\xi_-)$ and $(\xi'_+,\xi'_-)$ are deformation equivalent among transverse contact pairs via a map that is isotopic to the identity;
    \item[(d)] $X$ and $X'$ are homotopic through projectively Anosov flows.
\end{enumerate}
\end{theorem}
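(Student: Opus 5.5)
We prove the equivalences through the cycle (a)$\Rightarrow$(c)$\Rightarrow$(b)$\Rightarrow$(a), and treat (d) separately.

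For (a)$\Rightarrow$(c), let $h$ be an orbit equivalence between $X$ and $X'$ that is isotopic to the identity. Theorem \ref{thmintro:anosovbicontact} (\cite{BoM}, Theorem D) replaces $h$ by a smooth diffeomorphism $\widetilde h$, still isotopic to the identity. It also gives a homotopy through bi-contact structures from $(\widetilde h_*\xi_-,\widetilde h_*\xi_+)$ to $(\xi'_-,\xi'_+)$. This is exactly (c).

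For (c)$\Rightarrow$(b), take a deformation of transverse contact pairs. Its two components are paths of positive and of negative contact structures. By Gray stability each path is an isotopy. Composing with the map isotopic to the identity shows that $\xi_+$ is isotopic to $\xi'_+$ and $\xi_-$ is isotopic to $\xi'_-$.

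For (b)$\Rightarrow$(a), Theorem \ref{thm: homotopy} applies to every free homotopy class $\gamma$. It shows that $X$ and $X'$ have the same set of unoriented free homotopy classes of periodic orbits. Since $M$ is hyperbolic, Theorem \ref{theorem: BFM} then gives an isotopy orbit equivalence between $X$ and $X'$.

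The orientation caveat is handled as in Remark \ref{rem:Finite_Index}. If the bundles are not orientable, pass to the $H_1(M,\Z_2)$-cover. There, periodicity of $\gamma^k$ detects periodicity of $\gamma^{\pm1}$, so the periodic orbit data on $M$ is recovered from that on the cover.

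For (d), a homotopy $X_t$ through projectively Anosov flows produces a continuous family of bi-contact structures $(\xi_+^t,\xi_-^t)$. These are well defined up to contractible choices, using the diagonal construction of \cite{ETh,Mit} (\cite[Proposition 2.2.3]{ETh}). Gray stability then gives (b), so (d)$\Rightarrow$(b).

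For (a)$\Rightarrow$(d), start from (c). It gives a path of bi-contact structures from $(\widetilde h_*\xi_-,\widetilde h_*\xi_+)$ to $(\xi'_-,\xi'_+)$. The line field $\xi_+^t\cap\xi_-^t$ along this path is projectively Anosov throughout. Since $\widetilde h$ is isotopic to the identity, we can also join $X$ to $\widetilde h_*X$ by the pushforwards along the isotopy. These pushforwards are conjugate to $X$, hence stay Anosov. Orientations of the intersection line field can be fixed consistently because the flows are oriented.

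The main obstacle is in the (d) steps. One must check that the bi-contact pair depends continuously on the projectively Anosov flow, so that it really varies in a family along a homotopy, and that the intersection line field can be oriented coherently with $X'$ at the end of the path. All other steps are direct applications of Gray's theorem, \cite{BoM}, Theorem \ref{thm: homotopy} and \cite{BFM}.
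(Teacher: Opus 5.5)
Your proposal is correct and follows the paper's proof: (b)$\Rightarrow$(a) via Theorem \ref{thm: homotopy} and Theorem \ref{theorem: BFM}, (a)$\Rightarrow$(c) via Theorem D of \cite{BoM}, and (c)$\Rightarrow$(b) by Gray stability, which is what the paper means by ``clear''. The one difference is how you attach (d): you prove (d)$\Rightarrow$(b) and (a)$\Rightarrow$(c)$\Rightarrow$(d), using the isotopy to join $X$ to $\widetilde h_*X$, whereas the paper asserts (c)$\Leftrightarrow$(d) directly from the equivalence between projectively Anosov flows and bi-contact structures; this only spells out the same step.
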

As noted in the introduction, the hypothesis that  $M$ is hyperbolic can be replaced by the weaker hypotheses that the flow is transitive and has no transverse incompressible tori or Klein bottles; see \cite[Theorem 6.1.9]{BaM}.
\begin{proof}[Proof of Theorem \ref{theorem: classification}] The equivalence $(a) \Longleftrightarrow (b)$ follows from Theorem \ref{thm: homotopy} and 
Theorem \ref{theorem: BFM}. The implication $(c) \Longrightarrow (b)$ is clear. Theorem D of \cite{BoM} yields $(a) \Longrightarrow (c)$ and $(c) \Longleftrightarrow (d)$ is clear as the projective Anosov condition is equivalent to a bi-contact structure by \cite{ETh}, resp.\ \cite{Mit}.  
\end{proof}
\subsection{The unit cotangent bundle}
The following example will be fundamental:
\begin{example}\label{ex:Unit_Tangent}
Consider the unit cotangent bundle $M= U^*\Sigma \cong PSL(2,\R)/\Gamma$ of a closed hyperbolic surface and let $ \overline{\Gamma} =\pi_1(U^*\Sigma)$ be its fundamental group so that
$$1 \to \Z \to  \overline{\Gamma}\to \Gamma\to 1$$
is the central extension determined by the Euler class. There is a (transverse) pair of contact structures $\xi_+ = \xi_{can}$ -- the {\bf canonical} contact structure -- coming from the Liouville form and the other $\xi_- = \xi_{LC} $ given by the {\bf Levi-Civita} connection, which has periodic Reeb flow whose orbits are (oriented) fibers. This pair is then (up to deformation) the bi-contact structure associated to the Anosov flow $X_{geod}$.

For each $[\gamma] \in \Gamma$ one can lift to an element $[\overline{\gamma}]$ that is well-defined up to an element of the centre. One also has a canonical lift that is represented by the tangent vectors of a unit speed closed geodesic, i.e., by a periodic orbit of the geodesic flow.
Note that by quotienting by the centre of the group that is generated by the homotopy class of the fibre we get 
$$M^{\overline{\gamma}} \longrightarrow  \overline{M}^{\overline{\gamma}}:= U^* \mathbb{H}/\langle \gamma \rangle \cong T^2 \times \R.$$
Also note that the canonical contact structure is tangent to the $S^1$-fibers and the Levi-Civita connection is transverse. 

Both of these contact structures have a large group of symmetries: any flow transverse to a closed geodesic lifts to a flow that preserves $\xi_{can}$. The Levi-Civita connection is invariant under the flow that rotates the $S^1$-fibers or (the lift of) rotation of the cylinder $\mathbb{H}/\langle \gamma \rangle$ -- the latter coming from the 1-parameter group containing the element $\gamma$.

\medskip
\noindent \textbf{Positive Contact Structure:} In this case for any $\gamma\in\Gamma$, the cyclic cover $M^{\overline{\gamma}}$ always covers a thickened torus $T^2 \times \R = U^* \mathbb{H}/\langle \gamma \rangle$. Note that for each torus $T_s= T^2 \times\{s\}$ the characteristic foliation of $\xi_+(T_s)$ is ruled with dividing set consisting of two curves isotopic to lifts of the closed geodesic determined by $\gamma$. Namely $\xi_+(T_s)$ is given as the kernel of $\sin(x)dy$.

Note also that each torus $T_s= T^2 \times\{s\}$ is convex as there is a (complete) contact vector field transverse to them all. We can apply a small isotopy (or Giroux flexibility) to alter the characteristic foliation to be Morse-Smale with either two Reeb components or a foliation that is a suspension.

\begin{itemize}
    \item \textbf{Periodic case:}
In the periodic case one considers a cover of the torus that unwraps the fiber so that the dividing curve lifts to infinitely many parallel copies in the lifts $S^1 \times \R \cong C_s \to T_s$ and the resulting movie  of characteristic foliations is constant in the $s$-parameter.

\item  \textbf{Non-Periodic case:}
In this case the dividing curves unwrap to vertical lines on the cylinder. Note that we can choose the ruling slope to be such that $\xi_{can}$ is then tangent to a foliation by circles so that we in fact obtain that 
$(M^{\overline{\gamma}},\xi_{can}) \cong (U^* \mathbb{H},\xi_{can})$.
\end{itemize}
\medskip
\noindent \textbf{Negative Contact Structure:} Now let $\mathcal{L}^s_\gamma$ be the geodesic `fan' of $\mathbb{H}$ given by all geodesics that are (forward) asymptotic to the positive end point of the geodesic representative of $\gamma$. We lift $\mathcal{L}^s_\gamma \cong \mathbb{H}$ to $U^*\mathbb{H}$ by taking the tangents to the $1$-dimensional leaves which gives a leaf of the (lifted) weak stable foliation of the geodesic flow. Taking the (vertical) Reeb flow we then get a model for $\xi_-$ on the universal cover $\widetilde{M}$ of $M$ by unwrapping the $S^1$-fiber that has a constant Giroux movie of characteristic foliations \cite{Gi2} given by (lifts of) geodesics.

\begin{itemize}
    \item \textbf{Periodic case:}
 In this case the lift $X^\gamma_{geod}$ of the geodesic flow to $M^{\overline{\gamma}}$ foliates the (lifted) weak stable leaf $C_{\overline{\gamma}}$ of $\gamma$ which is a cylinder having a closed (attracting) orbit. The lift of the Reeb flow to $M^{\overline{\gamma}}$ is transverse to $C_{\overline{\gamma}}$, since it is just given by the (unwrapped) fibres. This yields a product structure $M^{\overline{\gamma}} \cong \R \times C_{\overline{\gamma}}$ given by cylinders whose characteristic foliation contains a single attracting orbit upon which all leaves accumulate.

\item  \textbf{Non-Periodic case:}
In this case $X^\gamma_{geod}$ has no periodic orbits on $M^{\overline{\gamma}}$ and the lift of the weak stable foliation $\mathcal{F}^s$ to $M^{\overline{\gamma}}$ gives a foliation all of $M^{\overline{\gamma}}$ whose leaves are copies of  $\mathbb{H}$. For a (weak stable) leaf $\mathbb{H} \cong \mathcal{L}^s_\gamma \subset M^{\overline{\gamma}}$ the (lifted) Reeb vector field induces a first return map $\varphi_\gamma$. Thus $M^{\overline{\gamma}} \cong (\R \times \mathcal{L}^s_\gamma)/\varphi_\gamma$ is a (possibly trivial) suspension of some diffeomorphism that acts on $\mathcal{L}^s_\gamma$ so that the geodesic fan is preserved. In particular, as $X^\gamma_{geod}$ is tangent to $\xi_-$, there is a Legendrian foliation of $\xi^{\overline{\gamma}}_{-}$ by lines foliating a family of planes that turn with the angle $\theta$ on  $M^{\overline{\gamma}} = S^1_{\theta} \times \R^2$.

\end{itemize}
\end{example}
We note that in the periodic and non-periodic cases both positive and negative contact structures are qualitatively very different. In this case, one could use this to then distinguish the periodic and non-periodic cases. This, however, uses the special geometry in an essential way. In what happens below, we will distinguish periodic from non-periodic by looking at the asymptotic slopes of the contact structures -- in the periodic case these will be equal and stable and in the non-periodic case they will not be. This will be the key observation to code periodic orbit data using the associated contact structures.
\subsection{General Periodic Case}
The periodic case requires no extra hypothesis on $M$.

\begin{lemma}\label{lemma: slope0} If $X$ is an Anosov flow on a closed $3$-manifold $M$ and if $\gamma \in \pi_1(M)$ can be represented by a periodic orbit of $X$, then the contact structures $\xi_+^\gamma$ and $\xi_-^\gamma$ have equal constant slope at infinity.
\end{lemma}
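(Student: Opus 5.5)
Let $\delta$ be a periodic orbit of $X$ representing $\gamma$, and let $\tilde\delta$ be its lift to $M^\gamma$, which is a closed orbit of the lifted flow $X^\gamma$. The plan is to construct an exhaustion of $M^\gamma$ by solid tori $T_n$ with convex boundary whose dividing curves have the same slope for $\xi_+^\gamma$ and $\xi_-^\gamma$, independent of $n$.

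\textbf{Step 1: local model.} By Remark \ref{rem:Finite_Index} we may assume the stable and unstable bundles are oriented. Then a neighbourhood of $\delta$ in $M$ is a solid torus $V$ with local stable and unstable annuli $A^s,A^u$ meeting along $\delta$. The same holds in $M^\gamma$ around $\tilde\delta$. In the coordinates of the bifoliated plane, $\gamma$ acts on $\mathcal{P}$ with a fixed point $p$ and is hyperbolic: it contracts along $\mathcal{F}_s$ and expands along $\mathcal{F}_u$. Using the cone description of \S\ref{subsection:bifol_cone}, $\xi_+$ projects into the quadrants $Q_+$ and $\xi_-$ into $Q_-$.

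\textbf{Step 2: the exhaustion.} Take a small rectangle $R\subset\mathcal{P}$ around $p$ with sides on stable and unstable leaves, and consider the tube it sweeps in $M^\gamma$. I would rather build $T_n$ as tubes over the rectangles $R_n=\gamma$-invariant "boxes" obtained by flowing: $T_n=\bigcup_{|t|\le n}X^\gamma_t(\partial\text{-neighbourhood of } \tilde\delta)$, suitably smoothed. The aim is to get tori whose boundary is a union of four annuli: two tangent to the weak stable foliation and two to the weak unstable one, meeting along four closed curves parallel to $\tilde\delta$. Along the stable faces the characteristic foliation of $\xi_\pm$ is given by the flow direction. Each face is therefore foliated by lines asymptotic to the corner orbits, so every tangency is a closed orbit parallel to $\tilde\delta$.

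\textbf{Step 3: slopes.} After a small perturbation (Giroux flexibility), each boundary torus becomes convex. Its dividing curves are isotopic to closed leaves, i.e. parallel to $\tilde\delta$, for both $\xi_+^\gamma$ and $\xi_-^\gamma$. Hence, with the longitude $l=\tilde\delta$, all slopes $s_n(\xi_\pm^\gamma,l)$ equal $0$. The family is exhausting because the tubes over the boxes $R_n$ can be chosen to grow so that they eventually contain any compact subset of $M^\gamma\cong\mathcal{P}^\gamma\times\R$. This is done by taking boxes in the quotient annulus $\mathcal{P}/\langle\gamma\rangle$ near $p$, which are bounded by pieces of $\gamma$-periodic stable and unstable leaves, each being a "rectangle" annulus. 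Since the slopes are constant and equal to $0$, both structures have equal constant slope at infinity.

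\textbf{Main obstacle.} The hard part is exhausting all of $M^\gamma$ by such tori. Far from $p$, the stable and unstable leaves may fail to close up into $\gamma$-invariant rectangles, because of non-separated leaves and lozenges. I would first try to avoid needing exact leaves. Take any exhaustion of the annulus $\mathcal{P}/\langle\gamma\rangle$ by annuli $A_n$ whose boundary curves are $\gamma$-invariant and piecewise transverse to the bifoliation, in a way compatible with both cones. Then show directly that the characteristic foliations of $\xi_\pm$ on $\partial A_n\times\R$, compactified via the front--back completion $\mathcal{P}\times[-\infty,\infty]$, have Reeb-type closed leaves that converge to $\mathcal{F}_u$ and $\mathcal{F}_s$ at the ends. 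This keeps the dividing curves parallel to the $\gamma$ direction. The monotonicity of slopes from the Giroux normal form then forces the slopes to stay constant on all intermediate tori.
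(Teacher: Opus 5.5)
There is a genuine gap: the exhaustion that everything rests on is never produced, and the constructions you propose do not exist when $\gamma$ is periodic. First, since $\gamma$ fixes $p$, it does not act freely on $\mathcal{P}$. So $\mathcal{P}/\langle\gamma\rangle$ is not an annulus, and $M^\gamma$ is not (annulus)$\times\R$; that picture belongs to the non-periodic case. Your fallback exhaustion by annuli $A_n$ with ``Reeb-type'' boundary behaviour is therefore not available.

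Second, the four-faced tori of Step 2 cannot exist. A compact face tangent to a weak leaf, with core parallel to $\tilde\delta$, must lie on a cylinder leaf of $M^\gamma$. Each such leaf contains exactly one closed orbit, and a corner is a closed orbit common to a stable face and an unstable face. Going around the torus then forces all faces onto the stable and unstable cylinders of a single orbit, and these meet only along that orbit. Similarly, no rectangle bounded by leaves near $p$ is $\gamma$-invariant, because $\gamma$ acts hyperbolically at $p$. Flowing a tube around $\tilde\delta$ only sweeps out a neighbourhood of the stable and unstable cylinders of $\tilde\delta$, not all of $M^\gamma$.

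The closing appeal to monotonicity does not rescue this. Monotonicity propagates constancy only once you already have an exhausting family of constant, equal slopes, which is exactly what is missing.

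The missing idea is that the tori need not be adapted to the bifoliation at all; one surface suffices. The lift $C_\gamma$ of the weak unstable leaf of the orbit is a properly embedded cylinder in $M^\gamma$. Its characteristic foliation, for both $\xi_+^\gamma$ and $\xi_-^\gamma$, is the orbit foliation of $X^\gamma$: the closed orbit $\bar\gamma$ together with leaves asymptotic to it in backward time.

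Take any exhaustion by solid tori $S^1\times D_n$ meeting $C_\gamma$ in a compact annulus $C_n\ni\bar\gamma$, with $X^\gamma$ exiting along $\partial C_n$. Cut $S^1\times D_n$ along $C_n$. Each half is a solid torus with corners in which $\bar\gamma$ is a nondegenerate closed leaf, repelling for one half and attracting for the other. By tightness the dividing curves of each half are essential, and since they avoid $\bar\gamma$ they are parallel to it. The attracting (resp.\ repelling) annuli they cut out lie in $T_n\setminus C_n$, since on $C_n$ every leaf leaves $\bar\gamma$ and exits. There they force $\Gamma_{T_n}$ to be parallel to $\bar\gamma$ for both $\xi_\pm^\gamma$.

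Your Step 3 mechanism (an essential nondegenerate closed leaf misses the dividing set, so the dividing curves are parallel to it) is the right one. It has to be applied to the two halves of an arbitrary exhausting torus cut along $C_n$, not to special boxes that do not exist.
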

\begin{proof}
Again, if $X$ is an Anosov flow in M, with associated contact structures $\xi_\pm$, and $\gamma$ is a periodic orbit of $X$, then the infinite cover of M given by $\gamma$ is diffeomorphic to the open solid torus $M^\gamma \cong S^1 \times \R^2$.
It contains a properly embedded infinite cylinder $C_\gamma$ which is the pull-back of the unstable leaf of $\gamma$. 

The cylinder $C_\gamma$ is foliated by 1-dimensional leaves of 
the pull-back $X^\gamma$ of $X$ and the foliation is homeomorphic to the geodesic foliation. Namely it contains a central periodic orbit $\overline{\gamma}$ pulling-back $\gamma$, and then non compact leaves all asymptotic to $\overline{\gamma}$ at $-\infty$ (there are no “generalized Reeb components”). This foliation of $C_\gamma$ also coincides with its characteristic foliation for both $\xi_+^\gamma$ and $\xi_-^\gamma$.

One can then find an exhaustion of $M^\gamma$ by concentric solid tori $S^1 \times D_n$ that intersect $C_\gamma$ along compact annuli containing $\overline{\gamma}$ and such that $X^\gamma$ is positively transverse to $T_n =\partial (S^1 \times D_n)$ along $C_\gamma\cap T_n$. This implies that, assuming $T_n$ is convex, the dividing set of $T_n$ for both $\xi^\gamma_\pm$ is parallel to $C_\gamma\cap T_n$. Indeed, one can cut $S^1\times D_n$ in two parts along $C_\gamma$. Each part is a solid torus (with corners), whose boundary  $2$-torus (with corners) $T_n^1$, resp. $T_n^2$, contains the annulus $C_n:=C_\gamma\cap (S^1\times D_n)$. The characteristic foliations of this annulus $C_n$ for both $\xi_\pm^\gamma$ is the standard one described above generated by $X^\gamma$: $\overline{\gamma}$ is a non singular periodic orbit,  repelling for one torus, say $T_n^1$, attracting for the other $T_n^2$ (since the coorientation of $C_n$ changes depending on the $2$-torus $T_n^i$ we consider) and it is exiting, resp. entering, along both of its boundary components. 

By tightness of $\xi_\pm^\gamma$, we know that the dividing curves on each of the two $2$-tori $T_n^i$ are essential and thus parallel. Moreover $\overline{\gamma}$ is contained in one of their  complementary regions  and thus they have to be parallel to $\overline{\gamma}$. Now, a small neighbourhood of the attracting locus of the characteristic foliation of $T_n^1$, resp. repelling locus for $T_n^2$, (made, in the generic case where it is Morse-Smale \cite{Gi1}, of the union of negative singularities together with their unstable separatrices and attracting periodic orbits; resp. positive singularities together with their stable separatrices and repelling periodic orbits) is by tightness a non empty union of small annuli contained in  $T_n^1 \setminus C_n$, resp. $T_n^2 \setminus C_n$. These neighbourhoods are thus also contained in $T_n$ where they determine the direction of the dividing set, given by their boundary and constantly  equal to the direction of $C_\gamma \cap T_n$.

Thus $\xi_\pm^\gamma$ have constant and equal slopes at infinity. Note that by monotonicity, they are then constant for any large enough exhaustion of tori.
\end{proof}

\subsection{Skew $\R$-covered Case}  We can now distinguish the case when $\gamma$ is not periodic and the orbit space is skew. This then gives a contact homology free proof of the main finiteness result of Bowden-Barthelm\'{e}-Mann \cite[Theorem 1.11]{BBM} in the hyperbolic case.

\begin{lemma}\label{lem:R_covered_periodic} If $X$ is an Anosov flow on a closed $3$-manifold $M$ which is $\R$-covered and skew, and if $\gamma \in \pi_1(M)$ is not represented by a periodic orbit of $X$, then
$Sl_{\xi_+}(\gamma,l)>Sl_{\xi_-}(\gamma,l)$ and the slope of any $2$-torus isotopic to a concentric one in $M^\gamma$ for $\xi_+^\gamma$ is different from the slope of any other one for $\xi_-^\gamma$.
\end{lemma}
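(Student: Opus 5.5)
The plan is to build an explicit model of $(M^\gamma,\xi_\pm^\gamma)$ from the skew bifoliated plane, and to find a \emph{common} foliation of $M^\gamma$ by concentric tori for which $\xi_+^\gamma$ has suspension characteristic foliations with slopes bounded below by some $\sigma$, while $\xi_-^\gamma$ has slopes bounded above by the same $\sigma$. By Theorem \ref{thm:dynamics_bi_plane}, since the flow is skew and $\gamma$ has no fixed point in $\mathcal{P}_\R$, $\gamma$ acts loxodromically: it translates the skew plane along the diagonal direction. In coordinates $\mathcal{P}_\R=\{-1<x-y<1\}$ this is conjugate to $(x,y)\mapsto(h(x),h(y))$, where $h$ is a fixed-point-free homeomorphism commuting with the skew structure. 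It is therefore a translation along the strip, with no fixed leaves; otherwise it would fix a stable leaf, and hence fix its unstable "corner" leaf, giving a fixed point.

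First step: describe $M^\gamma$. Using the embeddings $\pi_\pm\colon\widetilde M\to\mathbb P(T\mathcal P)$ of Subsection \ref{subsection:bifol_cone}, write $\widetilde M\cong\mathcal P\times\R$, with the $\R$-direction the flow, and $\xi_\pm$ pulled back from the canonical structure on $\mathbb P(T\mathcal P)$ restricted to the cones $Q_\pm$. The quotient by $\gamma$ is $(\mathcal P/\gamma)\times\R$, where $\mathcal P/\gamma$ is an open annulus foliated by the quotient stable and unstable foliations, all of whose leaves are lines crossing the annulus. Concentric tori are then modeled on $\partial(\text{compact subannulus}\times[-T,T])$: two ends lying in flow-transverse annuli, and two sides $\partial A\times[-T,T]$ which are tangent to $X$. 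I will take a longitude $l$ that goes once around the annulus $\mathcal P/\gamma$.

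Second step: compute slopes. On the side pieces $c\times[-T,T]$, with $c$ a closed curve in $\mathcal P/\gamma$, both $\xi_\pm$ contain $X$. The characteristic foliation is governed by whether $d\pi(\xi_\pm)$ is transverse to $c$. Take $c$ to be the projection of a $\gamma$-invariant "staircase" curve in $\mathcal P_\R$, transverse to both foliations, with tangent in the cone complementary to $Q_+$. Then $\xi_+$ is transverse to the sides, and they are crossed by Legendrian flow lines. On the top and bottom caps, the limits $(X_t)_*\xi_\pm\to\pm\mathcal F_u,\mathcal F_s$ from the front/back completion show that the induced line fields there are essentially the stable (resp.\ unstable) foliations. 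The skew condition enters here. Each stable leaf crosses the quotient annulus and, because $\gamma$ shifts diagonally, travels a definite amount "forward" in the $\gamma$ direction before leaving the strip. For $\xi_+$ the dividing/suspension slope relative to $l$ therefore picks up a positive contribution, and for $\xi_-$, where the unstable leaves lean the other way in the skew strip, a contribution of opposite sign. Computing winding numbers of the tangency curves, as in Example \ref{ex:can_con_tube}, should give $s_n(\xi_+)>c>s_n(\xi_-)$ uniformly in $n$.

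Third step: pass from one foliation to arbitrary tori. By monotonicity of slopes for concentric tori (Giroux normal form), a torus isotopic to a concentric one sits between two tori of my model exhaustion. Its $\xi_+$-slope is then $\ge\inf_n s_n(\xi_+)$ and its $\xi_-$-slope is $\le\sup_n s_n(\xi_-)$, which separates the two sets of slopes and gives the strict inequality of limits.

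I expect the main obstacle to be the second step: making the cap/side computation rigorous, with uniform control as the tori grow, and showing the strict gap does not collapse in the limit. My first attempt would exploit $\gamma$-equivariance. Take the exhausting tori to be preimages of the quotient by $\gamma$ of a $\gamma$-invariant region bounded by two equivariant staircases, scaled homothetically. Then the contact structures on consecutive tori are related by the flow and by the skew symmetry $\eta$ ("one-step shift" along the strip, which commutes with $\gamma$), so the slopes can be compared to a fixed model torus.
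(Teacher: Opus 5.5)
\textbf{Verdict: there is a genuine gap, in your second step.} Your overall frame matches the paper's proof: the model $A\times\R$ with $A=\mathcal{P}_\R/\gamma$, tori $\partial(A_n\times[-C_n,C_n])$ whose sides lie over closed curves $c$ transverse to both foliations, $\xi_+$ transverse to the sides, and monotonicity at the end. The gap is in the second step, which is exactly where the inequality for $\xi_-$ has to come from.

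\textbf{Why the cap argument fails.} You attribute the difference between $\xi_+$ and $\xi_-$ to the caps, as if $\xi_+$ saw stable leaves there and $\xi_-$ unstable ones ``leaning the other way''. But along every flow line, \emph{both} $d\pi(\xi_+)$ and $d\pi(\xi_-)$ converge to $\mathcal{F}^s$ at one end and to $\mathcal{F}^u$ at the other. They differ only in which of the two complementary cones they sweep through. So on the caps the two induced line fields are nearly identical, and the caps cannot produce contributions of opposite sign.

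\textbf{Why $\xi_-$ is not a suspension on these tori.} The skew structure forces the tangent to $c$ to lie in the cone swept by $\xi_-$: $\gamma$ increases both leaf coordinates, so a curve with tangent in the other cone cannot close up around $A$. For $C_n$ large, $d\pi(\xi_-)$ sweeps that whole cone along each flow segment. Hence there is a unique $t(p)$ with $\xi_-(p,t(p))=T_{(p,t(p))}(c\times[-C_n,C_n])$, and the $\xi_-$ characteristic foliation has a circle of singularities on each side. No common torus foliation of this type makes both structures suspensions.

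\textbf{The missing idea.} These tangency circles are what the paper's proof uses, and they remove your uniformity worry on the $\xi_-$ side.
\begin{enumerate}
\item[(1)] The circles are Legendrian with zero twisting relative to $\partial T_n$. So the $\xi_-$ dividing set is disjoint from them and parallel to $c$. The $\xi_-$ slope is therefore exactly the slope of $c$ for all large $n$, which is what makes it constant at infinity.
\item[(2)] The $\xi_+$ characteristic foliation is nonsingular and vertical on the sides. In the universal cover of $\partial T_n$ it crosses the lifts of these circles infinitely often with a constant sign. This is where skewness enters: the crossing of one cap (near $\mathcal{F}^u$, from one side to the other) and the crossing back over the other cap (near $\mathcal{F}^s$) drift in the \emph{same} direction along $\gamma$. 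Hence the $\xi_+$ slope differs from the slope of $c$.
\item[(3)] The limit of the $\xi_+$ slopes is computed on the front/back compactification by following full stable and unstable leaves. Each full leaf drifts a fixed amount across the strip, so the limit stays away from the slope of $c$.
\end{enumerate}

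Your proposed remedy via $\gamma$-equivariance and the one-step map $\eta$ does not repair this. Without identifying the tangency circles of $\xi_-$ on the sides, you have no correct computation of the $\xi_-$ slope on the model tori, so the claimed separation $s_n(\xi_+)>\sigma>s_n(\xi_-)$ is unsupported. Your third step is fine once this is fixed, and the separating value $\sigma$ can simply be taken to be the slope of $c$.
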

\begin{proof}
We consider the orbit space. The skew property means that it is diffeomorphic to a diagonal strip of finite width in the plane, where the stable and unstable foliations project to the foliations by horizontal and vertical lines.
The action of $\gamma$ on the orbit space takes one horizontal line to a different one, meaning that the quotient is obtained by identifying these two boundary horizontal lines of 
the fundamental domain they bound. This gives an open annulus $A$. To obtain the quotient $(M^\gamma,\xi_\pm^\gamma)$, it remains to thicken $A$ in the direction of $X$, which gives a product structure $A\times \R$.
We consider a sequence of tori obtained by first considering an exhaustion of $A$ by closed annuli $A_n$ and then a large enough thickening.

The sequence $A_n$ can be chosen so that, for $n$ large enough, the tangent lines $T\partial A_n$ (appropriately oriented) are both positively transverse to $\mathcal{F}^u$ and $\mathcal{F}^s$: one may think of the annulus as having boundary `parallel' to the dotted ideal boundary depicted in Figure \ref{fig:skew_plane_annulus}. As indicated above contact condition implies that on a given orbit of $X$, when $t\to +\infty$, $\xi_\pm \to \pm T\mathcal{F}^u$ under the flows and when $t\to -\infty$, $\xi_\pm \to T\mathcal{F}^s$.

 \begin{figure}[h]
\centering
\scalebox{1.1}{
\begin{tikzpicture}[x=1cm,y=1cm]

\draw[->,line width=0.4pt] (1,0) -- (5,0);
\draw[->,line width=0.4pt] (0,1) -- (0,5);

\draw[dashed,line width=0.4pt] (1.5,0.5) -- (4.5,3.5);
\draw[dashed,line width=0.4pt] (0.5,1.5) -- (3.5,4.5);

\draw[green,line width=0.4pt] (1.5,2) -- (2,2.5);

\draw[green,line width=0.4pt] (2.5,2) -- (3,2.5);

\draw[blue,line width=0.4pt] (0.5,1.5) -- (2.5,1.5);

\draw[blue,line width=1pt] (1,2) -- (3,2);

\draw[red,line width=0.4pt] (1.5,0.5) -- (1.5,2.5);

\draw[blue,line width=1pt] (1.5,2.5) -- (3.5,2.5);

\draw[blue,line width=0.4pt] (2,3) -- (4,3);

\draw[red,line width=0.4pt] (2.5,1.5) -- (2.5,3.5);

\draw[blue,line width=0.4pt] (2.5,3.5) -- (4.5,3.5);

\draw[red,line width=0.4pt] (3.5,2.5) -- (3.5,4.5);

\node[below] at (5.2,0) {$\mathcal{L}^u$};
\node[left] at (0,5.2) {$\mathcal{L}^s$};

\fill[green, opacity=0.25]
    (1.5,2) --
    (2,2.5) --
    (3,2.5) --
    (2.5,2) --
    cycle;
    
\node[below] at (2.2,2.5) {$A_n$};

\end{tikzpicture}
}
\caption{The shaded region corresponds to the compact annulus $A_n$ in the skew-plane $\mathcal{P}_\R$ where the thick blue leaves are identified under $\gamma$.}
\label{fig:skew_plane_annulus}
\end{figure}

We consider the contact structure corresponding to the positive quadrants on $\mathcal{P}_{\R}$, which in the positively skew plane is $\xi_-$. Then taking a large enough portion $[-C_n,C_n] \subset \R$ of the $X$ orbits, we obtain the torus $T_n =A_n \times [-C_n,C_n]$ with piecewise smooth boundary. If $C_n$ is large enough, we can make sure that for every $p\in \partial A_n$, there exists $t(p)\in (-C_n,C_n)$ such that 
$\xi_-(p,t(p)) = T_{(p,t(p))}(\partial A_n\times [-C_n,C_n])$. This means that the $\xi_-$ characteristic foliation of the torus $\partial T_n$ has a circle of singularities along the sides
$\partial A_n \times [-C_n,C_n]$. In particular the $\xi_-$ slope is constant for $n$ large enough and equal to the one of $\partial A_n \times \{0\}$.

We now observe that the contact structure $\xi_+$ is transverse to $\partial T_n$. Its characteristic foliation is tangent to $X$ on the sides $\partial A_n \times [-C_n,C_n]$
and contained in the sector between $\mathcal{F}^u$ and $\mathcal{F}^s$ that avoids $\xi_-$ on the other sides $A_n\times \{\pm C_n\}$. This means that on the boundary of the universal cover of $T_n$, it intersects the line of singularities of $\xi_-$ infinitely many times with a constant sign: the slope of the dividing set of $\partial T_n$ for $\xi_-^\gamma$ is different from that of $\xi_+^\gamma$. 
We then get that $Sl_{\xi_+}(\gamma,l)>Sl_{\xi_-}(\gamma,l)$, the slope for $\xi_+^\gamma$ being obtained, as the limit of the ones on $\partial T_n$, by the slope on the compactification of $A\times \R$, where we follow alternatively the stable and unstable leaves on the back and on the front annuli. 
Last, by monotonicity, the slope of every (isotopic to a) concentric $2$-torus for $\xi_+^\gamma$ is larger than the limit slope for $\xi_-^\gamma$ and thus than that of every (up to isotopy) concentric $2$-torus slope for $\xi_-^\gamma$.
\end{proof}

\begin{remark}
    Note that the result above does not, and indeed cannot, distinguish between $\gamma$ and $\gamma^{-1}$ and it is fortunate that Theorem \ref{theorem: BFM} result does not need this.
\end{remark}

\subsection{Non-$\R$-covered Case} 
\begin{center}
\textbf{Assumption:} We shall assume that $M$ is hyperbolic. 
\end{center}





\noindent Here our strategy is to use the special structure of the action of elements that act without fixed points on the bifoliated plane of an Anosov flow to embed the contact cover $(M^\gamma,\xi_\pm^\gamma)$ in standard examples where one can show that the slopes do not agree. Note that the assumption that $M$ is hyperbolic is not strictly necessary in what follows as it is only used to exclude scalloped regions in the bifoliated plane associated to the flow (cf.\ Theorem \ref{theorem: BFM}).

\subsection*{Rotation number for (bi)foliations} When $\gamma$ acts on the bifoliated plane $\mathcal{P}$, taking $x\in \mathcal{P}$, we consider an embedded arc $\alpha \subset \mathcal{P}$ joining $x$ to $\gamma(x)$ and that is tangent to  the stable foliation line field and agreeing with the orientation at $x$ and $\gamma(x)$. 

\begin{definition}
Let $rot_s(\alpha) = rot(\dot{\alpha},\mathcal{F}^s)$ be the rotation number of the (oriented) tangent line field of the stable foliation relative to the tangents of $\alpha$.
\end{definition} 
\noindent Note one can also define a similar number for the unstable foliation, but as the foliations are transverse these would then agree.

If $\gamma$ has no fixed point, the infinite cover of $M$ associated with $\gamma$  is diffeomorphic to $A \times (0,1)_t$, where $A$ is an annulus obtained as quotient of the bifoliated plane $\mathcal{P}$ and $(0,1)_t$ is directed by $X$. This follows from Theorem \ref{thm:dynamics_bi_plane}, since we are assuming that the manifold is hyperbolic. The rotation number of $\mathcal{F}^s$ along $\gamma$ is then given by the rotation of the stable line field along the core curve of $A$. In this case the rotation number of the stable foliation with respect to $\dot{\alpha}$ along $\alpha$ is an integer that is independent of $x$, since any two core curves are isotopic.

\begin{lemma} Let $\gamma \in \pi_1(M)$ be an element acting freely on the bifoliated plane $\mathcal{P}$ so that the quotient is a cylinder. Then rotation number $rot_s(\gamma) \in \Z$ is well defined.
\end{lemma}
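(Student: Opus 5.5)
We want to show that $rot_s(\gamma)$, defined through an arc $\alpha$ from $x$ to $\gamma(x)$, is an integer and does not depend on the choice of $x$ or $\alpha$. The plan is to pass to the quotient annulus $A=\mathcal{P}/\langle\gamma\rangle$. There, the question becomes a statement about the winding of one oriented line field relative to the tangent field of a closed curve.

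\textbf{Integrality.} The stable foliation $\mathcal{F}_s$ on $\mathcal{P}$ is tangent to a $C^1$ line field (Hirsch--Pugh--Shub). By our standing assumption (Remark \ref{rem:Finite_Index}), this line field is oriented and $\gamma$ preserves the orientation, so it descends to an oriented $C^1$ line field $L_s$ on $A$. Choose an arc $\alpha$ from $x$ to $\gamma(x)$ that is tangent to the stable line field at both endpoints with matching orientations, i.e.\ $d\gamma(\dot\alpha(0))$ is a positive multiple of $\dot\alpha(1)$. Its projection $\bar\alpha$ is then a closed $C^1$ curve in $A$, and it is embedded because $\alpha\cap\gamma(\alpha)$ consists only of the endpoint. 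Fix a trivialisation of $TA$ (the annulus is parallelisable) and measure angles of $L_s$ and of $\dot{\bar\alpha}$ along $\bar\alpha$. The relative angle function starts and ends at $0$ modulo $2\pi$, because the two directions coincide at the basepoint. Hence its total variation lies in $2\pi\Z$, so $rot_s(\alpha)\in\Z$. The difference of the two winding numbers does not depend on the trivialisation.

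\textbf{Independence of choices.} The number $rot_s(\alpha)$ equals $w(L_s|_{\bar\alpha})-w(\dot{\bar\alpha})$, where $w$ denotes winding with respect to the fixed trivialisation.
\begin{itemize}
\item The term $w(L_s|_{\bar\alpha})$ depends only on the free homotopy class of $\bar\alpha$ in $A$, since $L_s$ is a continuous oriented line field on all of $A$.
\item The tangent winding $w(\dot{\bar\alpha})$ of an embedded essential loop in an annulus is the same for all such loops with a given orientation. By Whitney--Graustein, or by Smale's theorem applied to a regular homotopy through embeddings, it equals the winding of the core curve, which is $0$ for the standard trivialisation of $S^1\times\R$.
\end{itemize}
Each $\bar\alpha$ is an embedded loop representing the generator determined by $\gamma$, so all such loops are isotopic to the core curve with the same orientation. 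Since $rot_s$ is a difference of two homotopy invariants, it depends neither on $x$ nor on $\alpha$.

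\textbf{Main obstacle.} The delicate step is showing that $\bar\alpha$ is an embedded essential curve isotopic to the core, rather than a null-homotopic or multiply winding loop. The argument is as follows. Since $\gamma$ acts freely, it is not periodic, so the trichotomy of Theorem \ref{thm:dynamics_bi_plane} applies; as $M$ is hyperbolic, $\gamma$ is loxodromic. The action of $\langle\gamma\rangle$ is then free and properly discontinuous, and $A$ is an annulus whose fundamental group is generated by the image of $\gamma$. Since $\bar\alpha$ lifts to a path from $x$ to $\gamma(x)$, it represents that generator. A simple closed curve in an annulus representing a generator is isotopic to the core, which is the fact used above.

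If, instead, one cannot choose $\alpha$ embedded, we would fall back on defining $rot_s$ directly as the winding of $L_s$ along the generator of $\pi_1(A)$ relative to the core's tangent. We would then check that any admissible $\alpha$ computes this number, by a regular homotopy in $\mathcal{P}$ rel endpoints, equivariantly projected.
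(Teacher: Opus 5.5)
\paragraph{Where the argument breaks.}
Your strategy is the paper's argument: pass to $A=\mathcal{P}/\langle\gamma\rangle$, write $rot_s$ as the winding of the descended line field $L_s$ along $\bar\alpha$ minus the tangent winding of $\bar\alpha$, and use that simple essential loops in $A$ are all isotopic to the core. The paper phrases this as ``the rotation of the stable line field along the core curve of $A$ \dots\ independent of $x$, since any two core curves are isotopic''. Your integrality step is fine.

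The gap is the assertion that $\bar\alpha$ is embedded ``because $\alpha\cap\gamma(\alpha)$ consists only of the endpoint''. Admissibility only means that $\alpha$ is an embedded arc in $\mathcal{P}$ from $x$ to $\gamma(x)$, positively tangent to $\mathcal{F}^s$ at both ends. That does not imply the claim, and the claim is false in general:
\begin{enumerate}
\item Take an admissible $\alpha_0$ whose projection is a smooth core curve.
\item Replace its initial piece by a small embedded arc that leaves $x$ in the stable direction, makes one full turn around $x$, and rejoins $\alpha_0$ with the same tangent. The result $\alpha$ is still embedded in $\mathcal{P}$ and admissible.
\item The extra turn crosses the end of $\gamma^{-1}(\alpha)$, which near $x$ coincides with $\gamma^{-1}(\alpha_0)$ and arrives at $x$ from behind. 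So $\bar\alpha$ has a double point.
\item The angle of $\mathcal{F}^s$ lifts to a real-valued function on the simply connected $\mathcal{P}$, so its change from $x$ to $\gamma(x)$ is path independent. The total turning of $\dot\alpha$, however, has changed by $2\pi$. Hence $rot_s(\alpha)=rot_s(\alpha_0)\pm 1$.
\end{enumerate}

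Your ``main obstacle'' paragraph does not address this: showing that $\bar\alpha$ represents the generator says nothing about simplicity. Your fallback fails for the same reason. Two admissible arcs with the same endpoints need not be regularly homotopic rel endpoint tangents, because their total turnings can differ by multiples of $2\pi$.

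\paragraph{The repair.}
Build simplicity into the definition, as the paper does implicitly by speaking of core curves. Only allow arcs with $\alpha\cap\gamma(\alpha)=\{\gamma(x)\}$ and $\alpha\cap\gamma^k(\alpha)=\emptyset$ for $|k|\geq 2$, that is, arcs for which $\bar\alpha$ is a simple closed curve. Such arcs exist through every $x$: take a core curve through the image of $x$, and rotate it by a small isotopy near that point so that it is positively tangent to $L_s$ there. For this class your two bullet points give independence of both $x$ and $\alpha$ verbatim.

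Hyperbolicity of $M$ is not needed in that step, since the hypothesis already says the quotient is a cylinder.
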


\subsection*{Extending bifoliated annuli}
\begin{lemma}\label{lemma: embedding} Let $1 \ne \gamma \in \pi_1(M)$ be a non-periodic class and let $A$ be the annulus, quotient of the bifoliated plane $\mathcal{P}$ by the action of $\gamma$. We consider a compact essential annulus $A_0$ in $A$. 
If $rot_s(\gamma)$ is zero, then there is an embedding of $(A_0,\mathcal{F}^s, \mathcal{F}^u)$ into a skew bifoliated annulus $A_1$.
\end{lemma}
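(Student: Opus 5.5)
The plan is to straighten the bifoliation on $A_0$ completely by a homeomorphism onto a region of a skew annulus. Pass to the cover $\mathcal{P}\to A$ and work with a $\gamma$-invariant strip $\widetilde{A}_0\subset\mathcal{P}$ covering $A_0$. Since $\mathcal{P}$ carries two transverse $C^1$ line fields, it has a global framing $(\dot{\mathcal{F}}^s,\dot{\mathcal{F}}^u)$. The hypothesis $rot_s(\gamma)=0$ says that, along a core curve of $A$, this framing does not rotate relative to the tangent of the core. Equivalently, after homotoping the core curve, we can take it transverse to both foliations, for instance positively transverse to $\mathcal{F}^s$ and $\mathcal{F}^u$ in the sense of Figure~\ref{fig:skew_plane_annulus}.

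First, I will construct a core curve $c\subset A_0$ that is a smooth closed curve transverse to both $\mathcal{F}^s$ and $\mathcal{F}^u$. To do this, start from any core curve. Its tangent direction, measured in the framing $(\mathcal{F}^s,\mathcal{F}^u)$, is a map $S^1\to S^1$ whose degree is the rotation number $rot_s(\gamma)=0$. Hence the map lifts to $\R$ and can be homotoped to a constant lying strictly inside the open quadrant between the two line fields. By an h-principle-type argument for curves in a surface with a prescribed cone field (Whitney–Graustein in the framed setting, using the compactness of $A_0$), the curve is then regularly homotopic to a curve whose tangent stays in that open cone. Lifting to $\mathcal{P}$, the curve $\tilde c$ is then a $\gamma$-invariant line that is monotone for both foliations, i.e.\ it crosses each leaf at most once. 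In a plane without Reeb components this is automatic for transversals: any transversal to a leaf of a foliation of $\R^2$ meets it at most once. This step is where I expect the main obstacle. One has to make sure the regular homotopy yields an embedded curve isotopic to the core, and not merely an immersed one. I would first try to take $c$ as a concatenation of short stable and unstable arcs, that is a ``staircase'', and then smooth the corners in the transverse direction, checking that the zero rotation number lets the staircase close up.

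Second, I will use $\tilde c$ to build leaf-space coordinates near $\widetilde{A}_0$. Flowing from $\tilde c$ along stable and along unstable leaves gives a local chart around $\tilde c$ of the form $(u,v)$, where $u$ parametrizes the unstable leaf through a point of $\tilde c$ and $v$ the stable one. Since $\tilde c$ is transverse to both foliations, the map $p\mapsto(\text{stable leaf},\text{unstable leaf})$ embeds a neighbourhood of $\tilde c$ into $\R\times\R$, with $\tilde c$ going to a graph of a strictly increasing function. The action of $\gamma$ becomes a translation along this graph. Because $A_0$ is compact, we may shrink and rechoose $c$ so that $A_0$ is a small bifoliated neighbourhood of $c$, up to an isotopy through bifoliated annuli. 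More precisely, compactness gives a uniform $\varepsilon$ such that every point of $\widetilde{A}_0$ lies in a product chart of size $\varepsilon$ around $\tilde c$; if not, I enlarge by a sequence of staircases.

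Third, I will embed the image into a skew strip. Reparametrize the coordinates so that $\tilde c$ becomes the diagonal and $\gamma$ acts as $(x,y)\mapsto(x+1,y+1)$. The image of $\widetilde{A}_0$ lies in a $\gamma$-invariant neighbourhood of the diagonal, and by compactness of $A_0$ this neighbourhood has bounded width, so it is contained in $\{|x-y|<K\}$. Rescaling gives an equivariant embedding into $\mathcal{P}_\R$ that preserves the horizontal and vertical foliations and commutes with the translation. Passing to the quotient by this translation yields the skew bifoliated annulus $A_1$ and the required embedding of $(A_0,\mathcal{F}^s,\mathcal{F}^u)$.
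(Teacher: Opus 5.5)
\textbf{There is a genuine gap, in Step 1 (the existence of a bi-transverse core curve), and Steps 2--3 also do not prove the lemma as stated.}

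Step 1 fails, and the problem is not embeddedness. The vanishing of $rot_s(\gamma)$ is only a formal condition. Whether an essential closed curve in $A$ can be transverse to $\mathcal{F}^s$ is a dynamical question, and no h-principle settles it: the relation ``tangent in an open convex cone'' is not ample, and a Reeb annulus already shows that formal data do not suffice.

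Concretely, for a non-$\R$-covered flow the axis of $\gamma$ in the stable leaf space may jump across a pair of non-separated leaves $b,a$. Both project to crossing leaves of $A$, with opposite orientations. Between them lie U-shaped stable leaves, with both ends in the same end of $A$, accumulating on $b\cup a$. This is the configuration handled in Case~2 of the paper's proof (crossing leaves oriented in opposite directions), and the mechanism behind the Reeb-like crossing components of Lemma~\ref{lemma: Reeb1} when the rotation is nonzero.

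A $\gamma$-invariant transversal $\tilde c$ would have to meet every crossing leaf, hence both $b$ and $a$. It would then contain a transverse arc from $b$ to $a$, and such an arc must cross some of the leaves accumulating on $b\cup a$ twice, which is impossible in the plane. Each such region contributes a half-turn of $\mathcal{F}^s$ relative to a core curve, with sign fixed by the end of $A$ into which the U-shaped leaves escape. So $rot_s(\gamma)=0$ only forces these regions to occur in oppositely turning pairs; it does not exclude them. In general there is therefore no bi-transverse core curve in $A$, let alone in $A_0$.

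Steps 2--3 fall short even when such a curve exists. The lemma concerns the given $A_0$, which may be large and contain arcs of leaves that never meet $\tilde c$ (leaves with both ends in one end of $A$). Such points get no leaf-space coordinates relative to $\tilde c$. Compactness gives no uniform $\varepsilon$ that would let you replace $A_0$ by a product neighbourhood of $c$, and ``enlarging by staircases'' does not address this.

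The missing idea is that you should not straighten inside $A$ at all. The paper works only along $\partial A_0$:
\begin{enumerate}
\item It picks crossing leaves $C^s,C^u$.
\item It makes $C^0$-small modifications of $\partial A_0$ (the triangle arguments of Cases~1 and~2). After these, both line fields have degree zero relative to $\partial A_0$ between the endpoints of the crossing leaves.
\item It extends the line fields to an abstract collar $A_1\supset A_0$ in which both are transverse to $\partial A_1$.
\item Poincar\'e--Bendixson, together with the existence of a crossing leaf, then forces every leaf to cross $A_1$.
\end{enumerate}
In the resulting skew annulus, the leaves through the Reeb-like pieces of $A_0$ exit and re-enter $A_0$. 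So the image of $A_0$ is typically not a neighbourhood of a diagonal, which is why a straightening argument inside $A$ cannot work.
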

\begin{proof} We consider regular points in the two axes of translation of $\gamma$ for the stable and unstable line fields in $\mathcal{P}$: that is take leaves $\ell_s,\ell_u$ of $\mathcal{F}^u$ respectively $\mathcal{F}^s$ so that $\ell_s$ separates $\gamma(\ell_s)$ and $\gamma^{-1}(\ell_s)$ and similarly $\ell_u$ separates $\gamma(\ell_u)$ and $\gamma^{-1}(\ell_u)$. Their existence is guaranteed by the existence of an axis in the leaf space(s) of the foliations: see \cite{BaM} Section 3.3.

Viewed in $A$, they provide leaves of the stable and unstable line fields that cross from one side of $A$ to the other. 
Taking a compact annulus $A_0$, these also cross from one side of $A_0$ to the other and intersect the boundary of $A_0$ in, respectively, $x^s$, $y^s$ and $x^u$, $y^u$.
Now, we claim that, after maybe slightly modifying $\partial A_0$ to adjust some transversality signs, we can find two such crossing leaves so that along an arc in $\partial A_0$ joining $x^s$ to $x^u$ and $y^s$ to $y^u$, the degree of $\mathcal{F}^s$ and $\mathcal{F}^u$ is zero. Here, having degree zero means that there is a homotopy of the tangent line field to  $\mathcal{F}^s$ and $\mathcal{F}^u$ relative to $x^s$, $x^u$, $y^s$ and  $y^u$ to a line field that is transverse to $\partial A_0$.

To see this, notice that it is enough to have the property between $y^s$ and $y^u$; the other part is then given by homotopy invariance (provided we have adjusted the transversality signs of $\mathcal{F}^u$ at $x^s$ and $\mathcal{F}^s$ at $x^u$, which is always possible via an isotopy of $\partial A_0$ keeping the transversality sign of $\mathcal{F}^u$ at $x^u$ and $\mathcal{F}^s$ at $x^s$, see Figure \ref{fig: adjust}). 
Choosing a crossing leaf $C^s$ for the stable foliation, we first adjust $\partial A_0$ so that the unstable foliation is entering $A_0$ at an end $x^s$ and exiting at the other $y^s$. 

\begin{figure}[h]
    \centering
\begin{tikzpicture}[scale=1.4, line width=1pt]

\draw[black,dashed] (-3,1) -- (3,1);
\draw[black,dashed] (-3,-1) -- (3,-1);

\draw[->,blue] (0,-1) -- (0,0.5);
\draw[blue] (0,0.1) -- (0,1);

\node[below] at (0.1,-1) {{\color{blue} $x^s$}};
\node[below] at (0.1,1.6) {{\color{blue} $y^s$}};

\draw[red] (-2.3,-1) -- (-2.3,0.5);
\draw[<-,red] (-2.3,0.1) -- (-2.3,1);

\draw[->,red]
(-2,-1)
.. controls (-1,1.6) and (1,-1.6) ..
(2,1);

\draw[->,red]
(-0.5,1)
.. controls (0,0.5) ..
(0.5,1);

\draw[->,red]
(-1,1)
.. controls (0,0) ..
(1,1);

\node[right] at (3,1) {$\partial A_0$};

\node[above] at (2.1,1) {{\color{red} $y^u$}};

\node[above] at (-1.9,-1.5) {{\color{red} $x^u$}};

\end{tikzpicture}
    \caption{Crossing leaves, where the dashed lines represent the boundary components of $A_0$.}
    \label{fig:placeholder}
\end{figure}

\textbf{Case 1: Crossing leaves intersect:} If a crossing leaf $C^u$ of $\mathcal{F}^u$ intersects $C^s$, these two leaves do the job: we can find an embedded triangle $T$ in $A_0$ whose sides are on $C^s$, $C^u$ and the boundary component of $\partial A_0$. We adjust $\partial A_0$ so that $\mathcal{F}^s$ is respectively entering and exiting $A_0$ at the endpoints $x^u$ and $y^u$ of $C^u$, see Figure \ref{fig: adjust}. 

\begin{figure}[ht]
\begin{overpic}
[scale=0.7]{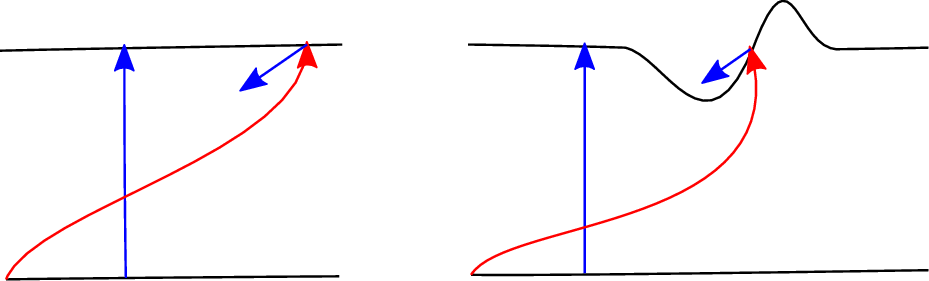}
\put(40,25){$\partial A_0$}
\put(77,27){\color{red}{$y^u$}}
\put(33,27){\color{red}{$y^u$}}
\put(13,27){\color{blue}{$y^s$}}
\put(62,27){\color{blue}{$y^s$}}

\end{overpic}
\caption{Example of a $C^0$-small modification of $\partial A_0$ to adjust the transversality sign of $\mathcal{F}^s$ at $y^u$ and match it with the one at $y^s$, without changing the one of $\mathcal{F}^u$.}
\label{fig: adjust}
\end{figure}

Along the edge of the triangle contained in $C^u$, the foliation $\mathcal{F}^s$ is entering the triangle. Thus by isotoping the arc keeping tangents fixed at the end points to be close to the other two sides of the triangle we can make it transverse to $\mathcal{F}^s$. Pulling back under the isotopy  then gives a $C^1$-smooth $1$-parameter family of vector fields $X_t$ along $[y^s,y^u] \subset \partial A_0$ fixed at the end points so that $X_0= \mathcal{F}^s$ and  $X_1$ is transverse to the boundary. In particular, we can then homotope the vector field on the remaining arc to be transverse to the boundary relative to its end points as well.



\textbf{Case 2: No crossing leaves intersect:} If no crossing leaf of  $\mathcal{F}^u$ intersects $C^s$, we  consider the saturation of $C^s$ under the unstable foliation. Since there is no periodic orbit, the closure has to contain two crossing leaves of the unstable foliation, oriented in opposite directions (possibly tangent to $\partial A_0$ at one point). One of them fulfills our goals, up to slightly modifying $\partial A_0$ at $x^s$ and $y^s$ to adjust the transversality signs of $\mathcal{F}^u$.

Indeed, both crossing leaves of $\mathcal{F}^u$ in the closure of the saturation of $C$ are approached by leaves that are parallel to the boundary. To fix notation, say it is approached by leaves parallel to the side that contains $y^s$. For such an approaching leaf, there is a half disk $D$ that it bounds with $\partial A_0$. This half disk is cut in two parts by $C^s$. We consider the triangle that contains in its boundary the portion of $\mathcal{F}^u$  that exits $A_0$ (as $C^s$ does on this side of $A_0$). We can then apply the same argument as before to this triangle. Its side in $\partial A_0$ is contained and almost equal to the arc between $y^s$ and $y^u$, so we also get the degree zero property along this arc.


\textbf{Extending to a skew annulus:} Once this is done, we can extend the tangent line fields of $\mathcal{F}^s$ and $\mathcal{F}^u$ to a larger annulus, keeping them constant along the four points $x^s$, $x^u$, $y^s$ and  $y^u$ so that we get two non singular transverse line fields on a larger annulus $A_1$, that are both transverse to $\partial A_1$, and each having a crossing leaf (an extension of the chosen ones in $A_0$). These line fields have no periodic orbits because they have a crossing leaf (and by Poincar\'e-Bendixon no contractible periodic orbit). This means that every leaf of both foliations crosses $A_1$.
Choosing the extension to agree with that coming from a skew-plane we have that $A_1$ then has the structure of a bifoliated skew annulus: i.e. it is diffeomorphic the quotient of $\mathcal{P}_{\R}$ by a fixed point free automorphism.
\end{proof}

\begin{corollary} In the non-periodic case, if the rotation along $\gamma$ is $0$, then the limit slopes of $\xi_+^\gamma$ and $\xi_-^\gamma$ are not constantly equal at infinity.
\end{corollary}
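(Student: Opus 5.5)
The plan is to transfer the argument of Lemma~\ref{lem:R_covered_periodic} to the present setting, using Lemma~\ref{lemma: embedding} to place a large compact piece of the quotient annulus $A=\mathcal{P}/\langle\gamma\rangle$ inside a skew bifoliated annulus. Since $\gamma$ is non-periodic and $M$ is hyperbolic, Theorem~\ref{thm:dynamics_bi_plane} shows that $\gamma$ acts loxodromically. Hence $M^\gamma\cong A\times\R$, with the $\R$-factor directed by $X^\gamma$.

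First I fix an exhaustion of $A$ by compact essential annuli $A_n$. Lemma~\ref{lemma: embedding} applies because $rot_s(\gamma)=0$, so each $(A_n,\mathcal{F}^s,\mathcal{F}^u)$ embeds in a skew bifoliated annulus $A_{1,n}$.

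On $A_{1,n}\times\R$ I place the model bi-contact structure of the skew flow. Concretely, I pull back the canonical structures $\xi_\pm=\pi_\pm^*\xi_{can}$ through the projectivized tangent bundle, as in Subsection~\ref{subsection:bifol_cone}. On $A_n\times\R$ this model agrees, up to homotopy through bi-contact structures tangent to $X^\gamma$ and rel the bifoliation, with $\xi_\pm^\gamma$. The reason is that both are determined by the cones $Q_\pm$ of the same bifoliation together with the flow direction. By Gray stability, or more cheaply by the fact that the characteristic foliations on tori $\partial(A_n\times[-C_n,C_n])$ depend only on the cone data, the dividing-set computations can then be done in the model.

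Next I repeat the torus construction of Lemma~\ref{lem:R_covered_periodic} inside $A_n\times[-C_n,C_n]$. I choose $\partial A_n$ positively transverse to both $\mathcal{F}^s$ and $\mathcal{F}^u$; this is possible precisely because $\partial A_n$ sits as a "diagonal" curve in the skew annulus. Taking $C_n$ large, the flow pushes $\xi_\pm$ toward $\pm T\mathcal{F}^u$ and $T\mathcal{F}^s$ at the two ends. Then the $\xi_-^\gamma$ characteristic foliation of $T_n$ has a circle of singularities along the sides, so its slope equals that of $\partial A_n\times\{0\}$. Meanwhile $\xi_+^\gamma$ is transverse to $T_n$, and its characteristic foliation crosses the $\xi_-$ singular lines with constant sign infinitely often in the universal cover of $T_n$.

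This yields $s_n(\xi_+^\gamma)>s_n(\xi_-^\gamma)$ with a uniform gap. By monotonicity of slopes along concentric tori, I conclude that no torus concentric in $M^\gamma$ has equal slopes for $\xi_+^\gamma$ and $\xi_-^\gamma$. In particular the two slopes are not constantly equal at infinity, contradicting the conclusion of Lemma~\ref{lemma: slope0} that holds in the periodic case.

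The main obstacle is the last inference. The strict inequality comes from a fixed positive rotation of the $\xi_+$ dividing curves relative to the $\xi_-$ ones on each $T_n$. I must check that this gap does not degenerate in the limit, so that the asymptotic slopes are genuinely distinct rather than merely distinct on each $T_n$. My first approach is to note that the $\xi_-$ slope is eventually constant, because it equals the fixed slope of $\partial A_n$ in the skew annulus. The $\xi_+$ slope is then bounded below by a nested sequence. If the limits coincided, the $\xi_+$ slope would have to decrease to the $\xi_-$ value. I would rule this out by comparing with the explicit skew model on $A_{1,n}$, where the slopes are computed as in Example~\ref{ex:can_con_tube}.
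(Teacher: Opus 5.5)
Your argument has a genuine gap at the step where, inside $A_n\times[-C_n,C_n]$ with $A_n$ exhausting $A=\mathcal{P}/\langle\gamma\rangle$, you choose $\partial A_n$ positively transverse to both $\mathcal{F}^s$ and $\mathcal{F}^u$. In the non-$\R$-covered case this choice is impossible, and that is the only case left here.

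To see why, let $c_1,c_2\subset A$ be disjoint essential closed curves transverse to $\mathcal{F}^s$, bounding an annulus $B$. A $\gamma$-invariant stable leaf would contain a periodic orbit, so $\mathcal{F}^s$ has no closed leaves in $A$. By Poincar\'e--Bendixson, every leaf entering $B$ must leave it. It cannot leave through the curve it entered: the lift $\tilde c_i\subset\mathcal{P}$ is a single transversal line in the plane, so it meets each leaf at most once. Hence $c_1$ and $c_2$ cross exactly the same leaves. If such curves bounded annuli exhausting $A$, every stable leaf would meet one fixed transversal $\tilde c$. The leaf space of $\mathcal{F}^s$ would then be $\tilde c\cong\R$, so the flow would be $\R$-covered.

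Note also that if diagonal boundaries were available in $A$, the computation of Lemma~\ref{lem:R_covered_periodic} would run verbatim in $M^\gamma$, and Lemma~\ref{lemma: embedding} would be superfluous. Diagonal curves exist only near the ends of the skew annulus $A_1$, i.e.\ in the extension region outside $A_0$, which is not part of $M^\gamma$.

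The paper therefore works in the model:
\begin{enumerate}
\item It extends the actual restrictions $\xi_\pm^\gamma|_{A_0\times I}$ to $A_1\times I$, tangent to the $I$-direction and rotating between the extended foliations.
\item It notes that these extensions are universally tight.
\item It applies Lemma~\ref{lem:R_covered_periodic} there, obtaining a strict inequality on an outer diagonal torus $T'$.
\item Monotonicity pushes this inequality inward: $s_T(\xi_+)\ge s_{T'}(\xi_+)>s_{T'}(\xi_-)\ge s_T(\xi_-)$ for $T=\partial(A_0\times I)$. Since $T$ was arbitrary, the slopes cannot be constantly equal at infinity.
\end{enumerate}

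For this transfer, the extended structures must coincide with $\xi_\pm^\gamma$ on $A_0\times I$ on the nose. Your model is only homotopic to $\xi_\pm^\gamma$ there. Gray stability then produces an isotopy that moves the torus, and the dividing slope on a fixed torus is not determined by the cone data alone. So neither of your two justifications lets you read slopes of $\xi_\pm^\gamma$ off the model.

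Finally, your \emph{main obstacle} paragraph is not needed. The statement only asks that the slopes not be constantly equal at infinity. That already follows once every sufficiently large concentric torus has distinct slopes: if both structures were eventually constant with a common value $s$, monotonicity would force every sufficiently large concentric torus to have both slopes equal to $s$. The paper claims no strict inequality of asymptotic slopes in the rotation-zero case. Your sketch for it, including the ``uniform gap'', rests on the same unavailable diagonal $\partial A_n$, and on a single skew model, whereas $A_{1,n}$ changes with $n$.
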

\begin{proof} Taking a large torus of the form $A_0 \times I \subset A\times \R$, by Lemma \ref{lemma: embedding} we can embed it in some $A_1\times \R$, in which we extend the contact structures $\xi_\pm^\gamma\vert_{A_0\times I}$ to $A_1 \times I$, being tangent to the $I$-factor and rotating between the extended stable and unstable manifolds. These extensions are still universally tight (since their universal covers can be embedded in the positive and negative standard tight $\R^3$, an exercise left to the reader) and, by Lemma \ref{lem:R_covered_periodic}, 
the slopes of $\xi_\pm^\gamma$ at the boundary of $A_0 \times I\subset A_1\times \R$ are different. Since the initial torus was arbitrary it follows that the slopes cannot be both constant at infinity and equal.
\end{proof}
In general we also have to deal with the case that there is non-trivial rotation, in which case we extend the foliation to consist entirely of Reeb components.
\begin{lemma}\label{lemma: Reeb1} In the non-periodic case, if the rotation associated with $\gamma$ is $k\neq 0$ then, if $A_0$ is a compact essential annulus in $A$, there exists an embedding of 
$(A_0,\mathcal{F}^s, \mathcal{F}^u)$ in an annulus $A_1$ a pair of transverse foliations each consisting of $2\vert k\vert$ Reeb-like crossing components.
\end{lemma}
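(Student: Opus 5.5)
The plan follows the proof of Lemma \ref{lemma: embedding}, replacing the degree-zero boundary condition by a degree-$k$ one and the skew model by a model annulus built from Reeb-like crossing components.

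\textbf{Step 1: crossing leaves and boundary adjustments.} As before, I first choose crossing leaves $C^s$ and $C^u$ of $\mathcal{F}^s$ and $\mathcal{F}^u$ in $A_0$. These come from the axes of $\gamma$ in the leaf spaces, exactly as in the proof of Lemma \ref{lemma: embedding}. Their endpoints on $\partial A_0$ are $x^s, x^u$ on one boundary component and $y^s, y^u$ on the other. Using the triangle argument of Cases 1 and 2, together with the $C^0$-small adjustments of $\partial A_0$ in Figure \ref{fig: adjust}, I arrange that along the arc $[y^s,y^u]$ the line fields can be homotoped, relative to the endpoints, to be transverse to $\partial A_0$. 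The rotation number $rot_s(\gamma)=k$ then forces the tangent lines of $\mathcal{F}^s$ and $\mathcal{F}^u$ to make a net $k$ full turns, relative to $T\partial A_0$, along the remaining arc $[x^s,x^u]$ together with the core. More usefully, I can push all the rotation onto one boundary component, say the $y$-side. After this homotopy the line fields along $\partial A_0$ are no longer transverse there: they have winding $k$ relative to the boundary tangent.

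\textbf{Step 2: model collar with Reeb-like components.} Next I build a model collar $\partial A_0 \times [0,1]$ carrying two transverse line fields. On the inner side they agree with the germ of $(\mathcal{F}^s,\mathcal{F}^u)$ near $\partial A_0$. On the outer side they are tangent to the new boundary $\partial A_1$ with relative winding $k$. The natural model is a linear foliation whose slope turns by $k\pi$ half-turns across the collar. Concretely, in coordinates $(\theta,r)$ I take a line field at angle $\phi(\theta)$ with $\phi(2\pi)-\phi(0)=2\pi k$, interpolated in $r$ to the given germ. A line field turning $k$ full times along a boundary circle has exactly $2|k|$ tangencies with that circle, alternating in sign. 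Closing up near the outer boundary therefore produces $2|k|$ half-Reeb components on each side, and this is where the count $2|k|$ comes from. Transversality of the two extended line fields is preserved because I rotate both by the same angle function, keeping the angle between them fixed. This is the same mechanism that made the skew extension work.

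\textbf{Step 3: control of the leaves.} Finally I check that each extended foliation consists of $2|k|$ Reeb-like components that cross $A_1$, with no closed leaves and no other behaviour. The argument is Poincar\'e--Bendixson: every closed leaf would be essential and hence would obstruct the original crossing leaves, while contractible closed leaves cannot occur for a nonsingular foliation of an annulus. The leaf structure is then forced by the tangency pattern on $\partial A_1$.

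I expect the main obstacle to be Step 1. Redistributing the rotation so that the degree defect sits entirely on $\partial A_0$ in a controlled form, compatible for both $\mathcal{F}^s$ and $\mathcal{F}^u$ simultaneously, requires revisiting Case 2 of Lemma \ref{lemma: embedding} carefully. My first attempt would be to measure the rotation along the crossing leaf $C^s$ itself, where the rotation is zero, and deduce that the defect along the two boundary arcs sums to $k$.
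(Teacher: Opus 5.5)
Your Step 2 is essentially the paper's proof. There, both line fields are extended to a larger annulus so that near each end they agree with the model $\cos(k\theta)\partial_\theta+\sin(k\theta)\partial_y$ (plus a transverse companion), which has exactly $2|k|$ quadratic tangencies with the horizontal circles, and then Poincar\'e--Bendixson is applied. The steps around it do not work as written.

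\textbf{Step 1.} You propose to push all the rotation onto one boundary component, and you expect the defects on the two sides to sum to $k$. That is impossible. The winding of a nonvanishing line field relative to the tangent of an embedded essential curve in an annulus is invariant under isotopy of the curve. So it equals $rot_s(\gamma)=k$ along \emph{each} component of $\partial A_0$ separately, and no choice of crossing leaves or adjustment of $\partial A_0$ changes this. This invariance is also all that the collar interpolation needs. So Step 1, which you flag as the main obstacle, is simply unnecessary: crossing leaves and the triangle arguments of Lemma \ref{lemma: embedding} play no role when $k\neq 0$.

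Minor points in Step 2:
\begin{enumerate}
\item The extended fields cannot be tangent to $\partial A_1$, since that would force winding $0$; you mean $2|k|$ tangencies with it.
\item $\phi$ must be monotone, e.g.\ $\phi=k\theta$, to get exactly $2|k|$ tangencies.
\item To keep transversality, first homotope the angle between the two germs to a constant before rotating both.
\end{enumerate}

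\textbf{Step 3.} Your exclusion of closed leaves fails. The crossing arcs of $\mathcal{F}^s|_{A_0}$ need not extend to crossing leaves of $A_1$. In fact most cannot: leaves near a transverse crossing arc also cross $A_0$, so there is an open family of such arcs, while the target foliation has only $2|k|$ crossing leaves, all other leaves being U-shaped. Hence an essential closed leaf lying in the collar $A_1\setminus A_0$ is not obstructed by anything you have.

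The correct argument, which is the paper's, is again invariance of winding. Along a closed leaf the line field is tangent to the leaf, so its relative winding is $0$. But every essential embedded curve in $A_1$ has relative winding $k\neq 0$. Contractible closed leaves are excluded by the absence of singularities. With no closed leaves, Poincar\'e--Bendixson sends every half-leaf to an end, and the tangency pattern of the model then yields the $2|k|$ Reeb-like crossing components.
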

\begin{proof} We can extend the tangent line fields, which are $C^1$, so that their angle map along the boundary of $A_1$ has exactly $2k$ tangencies and quadratic contact. That is the line field is generated by the vector field
$$\cos(n\theta)\partial_\theta + \sin(n\theta)\partial_y$$
outside some compact annulus in $S_\theta^1\times \R_y$. Note that since the rotation number is non-zero, the resulting foliations can have no periodic orbits, since then the rotation number must vanish.

In this case, the foliations are given (up to diffeomorphism) by $2\vert k\vert$ crossing Reeb-components. This follows by applying Poincar\'{e}-Bendixson, which implies that no orbit can visit an end more than once.
\end{proof}
As in the vanishing rotation case, we can extend the contact structures $\xi_\pm^\gamma$ from $A_0 \times I$ to $A_1\times I$ by taking plane fields tangent to the $I$-direction and that rotate between the stable and unstable directions. The extensions are still universally tight since again their universal cover embedds in the tight positive and negative $\R^3$, see also the proof of Lemma \ref{lemma: Reeb2} below.
\begin{lemma}\label{lemma: Reeb2} If $(A_1,\mathcal{F}^s, \mathcal{F}^u)$ is made of $2k$ Reeb-like crossing components, then the limit slopes of $\xi_\pm^\gamma$  on $A_1\times I$ are different. 
\end{lemma}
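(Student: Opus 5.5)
We will compute the slopes of $\xi_\pm^\gamma$ directly on tori of the form $\partial(A_2\times[-C,C])$, mimicking the proof of Lemma \ref{lem:R_covered_periodic} but in the explicit Reeb-like model of Lemma \ref{lemma: Reeb1}. Outside a compact annulus, the stable line field on $A_1\cong S^1_\theta\times\R_y$ is generated by $\cos(k\theta)\partial_\theta+\sin(k\theta)\partial_y$. The unstable one is a transverse rotation of it; to fix ideas, we may assume after a homotopy among transverse pairs that it is generated by $-\sin(k\theta)\partial_\theta+\cos(k\theta)\partial_y$. We then work on $A_1\times\R$, where $X=\partial_t$ and $\xi_\pm$ is spanned by $\partial_t$ and a line rotating from $\mathcal{F}^s$ at $t=-\infty$ to $\pm\mathcal{F}^u$ at $t=+\infty$, as in Section \ref{subsection:bifol_cone}.

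\textbf{Step 1: choice of the exhausting tori.} We take $A_2=S^1\times[-R,R]$ with $R$ large and $T=\partial(A_2\times[-C,C])$, a torus with corners. We use as longitude $l$ the curve $S^1\times\{R\}\times\{0\}$ and as meridian the boundary of $\{\theta_0\}\times[-R,R]\times[-C,C]$. On the vertical sides $S^1\times\{\pm R\}\times[-C,C]$ both contact structures contain $\partial_t$. The tangency locus with $\xi_-$ (resp.\ $\xi_+$) is where the rotating line in $\xi_\mp$ equals $\partial_\theta$. Along $\theta$ the line $\mathcal{F}^s$ turns $k$ times relative to $\partial_\theta$, with tangencies at $2|k|$ points. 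So on each vertical side, and for $C$ large enough, the Legendrian line $\partial_t$ meets a curve of tangencies that winds, to leading order, like a graph $t=t_\pm(\theta)$. The key point is that each time $\partial_\theta$ sits between $\mathcal{F}^s$ and $\pm\mathcal{F}^u$ in the sector swept by $\xi_\pm$, there is exactly one $t$ with $\xi_\pm\ni\partial_\theta$. In the complementary sectors there is none.

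\textbf{Step 2: reading off dividing curves.} For $\xi_-$ the swept sector is one of the two quadrants between $\mathcal{F}^s$ and $\mathcal{F}^u$, and for $\xi_+$ it is the other one. Following the model, I expect the characteristic foliation on each vertical side to consist of $|k|$ arcs of singular circles, or half-Reeb strips, for each sign. These arcs connect to the horizontal faces $A_2\times\{\pm C\}$, where the characteristic foliation is close to $\mathcal{F}^s$ or $\pm\mathcal{F}^u$ and hence is itself Reeb-like with $2|k|$ components. Using Giroux flexibility and counting, we compute the dividing set of $T$ for each of $\xi_\pm$. For one sign the dividing curves follow the tangency arcs across the horizontal faces and close up with slope (meridian over longitude) of order $+1/|k|$ per turn; for the other sign the slope is of the opposite sign. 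The crucial outcome is that the dividing slopes of $\xi_+$ and $\xi_-$ on $T$ differ by a definite amount independent of $R,C$, namely that the meridional winding counts $\pm$ differ. Combined with monotonicity (non-increasing for $\xi_+$, non-decreasing for $\xi_-$), this yields $Sl_{\xi_+}(\gamma,l)\ge s(T,\xi_+)>s(T,\xi_-)\ge Sl_{\xi_-}(\gamma,l)$, exactly as at the end of Lemma \ref{lem:R_covered_periodic}.

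\textbf{Step 3 and main obstacle.} An alternative, cleaner route is to pass to the $|k|$-fold-like model and compare with Example \ref{ex:can_con_tube}. A pair of transverse foliations of $S^1\times\R$ all of whose leaves turn $k$ times looks, after rescaling, like the canonical contact tube of slope $k$ for $\xi_+$ and the reflected one for $\xi_-$. We would then get $s_n(\xi_+)>0>s_n(\xi_-)$ with respect to the $\theta$-longitude, as in that example. I would first try to build such an explicit $\pi_1$-equivariant identification of $(A_1\times\R,\xi_\pm)$ with a subset of the unit tangent bundle of $\R^2$, using the map $\pi_\pm$ of Section \ref{subsection:bifol_cone}. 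This would also show universal tightness. The hard part is making the sign bookkeeping rigorous: one must check that the tangency curves for $\xi_+$ and $\xi_-$ genuinely produce dividing sets with different slopes, rather than merely different characteristic foliations. This is where a careful Giroux-flexibility argument on the corners of $T$ is needed.
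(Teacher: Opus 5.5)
There is a genuine gap. The one step that carries the content of the lemma is asserted, not proved. In Step 2 you write ``I expect\ldots'' and ``we compute the dividing set''; in Step 3 you write ``I would first try to build\ldots'' and ``the hard part is\ldots''. What these leave open is exactly the statement of Lemma~\ref{lemma: Reeb2}: that the dividing sets of $\xi_+^\gamma$ and $\xi_-^\gamma$ along an exhaustion have different limiting slopes. Your heuristics are not reliable enough to fill this in. Write the rotating line as having angle $k\theta+\phi_\pm(t)$.
\begin{itemize}
\item On a side $S^1\times\{\pm R\}\times[-C,C]$ the singular set of $\xi_+$ is $\{k\theta+\phi_+(t)\in\pi\Z\}$. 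This consists of $2|k|$ arcs, not $|k|$.
\item These arcs are not the dividing curves. With the contact vector field $\partial_y$, the dividing arcs are $\{\cos(k\theta+\phi_+(t))=0\}$, interleaved with the singular arcs.
\item The corner circles of your box are not Legendrian, since $\partial_\theta\in\xi_+$ only at $2|k|$ points. So the way arcs on adjacent faces join up, which is exactly what determines the slope, is not given by the standard edge-rounding rule without further work.
\end{itemize}
Separately, your closing inequality runs the wrong way. Since $s_n(\xi_+)$ is non-increasing and $s_n(\xi_-)$ is non-decreasing, one has $s(T,\xi_+)\ge Sl_{\xi_+}(\gamma,l)$ and $s(T,\xi_-)\le Sl_{\xi_-}(\gamma,l)$. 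A strict inequality on a single torus therefore says nothing about the limits. You need a gap that persists along the whole exhaustion; in the model this is the exact values $\pm k$. That is what the subsequent corollary uses, in the form $s_n(\xi_+^\gamma,l)\ge|k|$ and $s_n(\xi_-^\gamma,l)\le-|k|$.

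The missing idea is the one you only gesture at in Step 3, and it is what the paper actually does. First normalise the Reeb-like pair to be invariant under $y$-translations on all of $A_1$. Then $(A_1\times I,\xi_+^\gamma)$ is identified with a $k$-fold cover of $(U^*\R^2,\xi_{can})$. There the tubes $U^*D_n$ have an explicitly computable dividing set, of slope $k$ with respect to the fibre longitude (Example~\ref{ex:can_con_tube}). Finally, $\xi_-^\gamma$ is the mirror image of $\xi_+^\gamma$, so it has slope $-k$. The paper also sketches an alternative: Giroux flexibility on the tori of the $y$-periodic quotient $T^2\times\R$ makes both structures transverse to a common circle fibration, and the monotone holonomy then forces slopes of opposite signs.

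In your own coordinates the mirror step is immediate. Take $\phi_-=-\phi_+$; then the reflection $(\theta,y,t)\mapsto(-\theta,y,t)$ pulls $\xi_-^\gamma$ back exactly to $\xi_+^\gamma$ and reverses the longitude. So the two limit slopes are negatives of each other. What you still have to supply is that the $\xi_+^\gamma$-slope is non-zero, and that is precisely what the comparison with the canonical tube provides.
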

\begin{proof} 
We assume after applying a suitable diffeomorphism that the Reeb foliations are invariant under vertical translations of $A_1$. We then quotient out by an integer translation in the vertical direction to get a contact structure on $T^2 \times \R$ so that the characteristic foliations on each $T^2_s = T^2 \times \{s\} $ is given by Reeb components. This means that each $T^2_s$ is convex and we are in the {\em non-periodic positive case} from Example \ref{ex:Unit_Tangent} (after taking a $k$-fold covering). Moreover, $\xi_+^\gamma$ and $\xi_-^\gamma$ are related by a reflection in $s$. Now the contact structure is then contactomorphic to a $k$-fold cover of $(U^*\mathbb{R}^2,\xi_{can})$ and the slope on a torus $U^* D_n$ with respect to the natural framing given by the $S^1$-action is $\frac{1}{k}$ (and $k$ in our framing whose longitude is the $S^1$ fiber). Hence reflecting we get $-\frac{1}{k}$ (resp. $-k$) and the slopes do not agree. 

 Alternately note that by Giroux Flexibility we can isotope the contact structures (independently) in $T^2 \times \R$ so that the characteristic foliations are supensions on each $T_t$ (which are fixed for both contact structures). In particular the contact structures are (up to isotopy) transverse to a common $S^1$-fibration after lifting to the $\Z$-cover $M^\gamma$ and hence the slopes are different (cf.\ Example \ref{ex:can_con_tube}).
\end{proof}
Thus we get, as in the rotation $0$ case:

\begin{corollary} In the non-periodic case, if the rotation $k$ along $\gamma$ is non-zero, then the slopes of $\xi_+^\gamma$ and $\xi_-^\gamma$ are not constantly equal at infinity.
\end{corollary}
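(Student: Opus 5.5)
The argument runs exactly as in the rotation $0$ case, with Lemmas~\ref{lemma: Reeb1} and \ref{lemma: Reeb2} playing the roles of Lemmas~\ref{lemma: embedding} and \ref{lem:R_covered_periodic}.

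Since $\gamma$ is non-periodic and $M$ is hyperbolic, Theorem~\ref{thm:dynamics_bi_plane} gives $M^\gamma\cong A\times\R$, where $A=\mathcal{P}/\langle\gamma\rangle$ is an open annulus and the $\R$-factor is directed by $X^\gamma$. Suppose, for contradiction, that $\xi_+^\gamma$ and $\xi_-^\gamma$ have constant and equal slope at infinity. Then there are exhausting concentric tori on which both structures have the same constant slope $s$. By monotonicity, any concentric torus lying far enough out has $\xi_+^\gamma$- and $\xi_-^\gamma$-slopes both equal to $s$. It therefore suffices to produce, arbitrarily far out, a concentric torus on which the two slopes differ.

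Take a large compact essential annulus $A_0\subset A$ and a large interval $I$, and consider the torus $\partial(A_0\times I)$. Lemma~\ref{lemma: Reeb1} embeds $(A_0,\mathcal{F}^s,\mathcal{F}^u)$ into an annulus $A_1$ whose two transverse foliations each consist of $2|k|$ Reeb-like crossing components. We then extend $\xi_\pm^\gamma|_{A_0\times I}$ to $A_1\times I$ by plane fields that contain the $I$-direction and rotate between the extended stable and unstable lines. These extensions are contact by the same quadrant criterion as in \S\ref{subsection:bifol_cone}. They are universally tight because the universal cover embeds in the tight positive and negative $\R^3$, via the unit tangent bundle model $\pi_\pm^*\xi_{can}$ of the transverse line field pair.

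Lemma~\ref{lemma: Reeb2} then says that the slopes of $\xi_+$ and $\xi_-$ on $A_1\times I$ (in suitable framing $k$ versus $-k$, or more robustly, transverse to a common circle fibration and so of opposite signs) are different. Applying monotonicity inside the tight manifold $A_1\times I$, the slope of the torus $\partial(A_0\times I)$ for the positive structure sits on one side of the common reference, and the slope for the negative structure sits strictly on the other side. Hence the two slopes differ on $\partial(A_0\times I)$.

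These slopes are intrinsic to the germ of $\xi_\pm^\gamma$ near that torus, so they coincide with the slopes computed in $M^\gamma$. Since $A_0$ and $I$ were arbitrary, this contradicts the assumed constant equal slopes.

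The main obstacle is the comparison step. Lemma~\ref{lemma: Reeb2} is phrased for limit slopes of a translation-invariant model, whereas we need a strict inequality of slopes on every torus $\partial(A_0\times I)$, and these tori are not in Giroux normal form. I would handle this by isotoping both extended structures, via Giroux flexibility, to be transverse to a common $S^1$-fibration of $A_1\times\R$, as in Example~\ref{ex:can_con_tube}. In that position, every concentric torus has $\xi_+$-slope of one strict sign and $\xi_-$-slope of the opposite strict sign with respect to the fibre longitude, which gives the strict inequality on $\partial(A_0\times I)$ directly.
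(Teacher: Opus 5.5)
Your proposal is correct and follows the paper's proof: each far-out concentric torus sits in some $A_0\times I$, which embeds via Lemma~\ref{lemma: Reeb1} into the extended tight model on $A_1\times I$, and monotonicity of slopes together with Lemma~\ref{lemma: Reeb2} gives $s_n(\xi_+^\gamma,l)\geq |k|$ and $s_n(\xi_-^\gamma,l)\leq -|k|$, so the two slopes never agree. The ``obstacle'' in your last paragraph is already handled by that monotonicity step, since nested convex tori in a tight structure need not be in normal form, so the Giroux-flexibility detour is not needed (its claim about every concentric torus itself rests on the same monotonicity).
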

\begin{proof}
     Every solid torus $T_n$ in $M^\gamma$ is contained is some $(A_0 \times I,\xi_\pm^\gamma)$ and thus embedds in its extension $(A_1\times I,\xi_\pm^\gamma)$.
    By monotonicity of the slopes, in view of Lemma \ref{lemma: Reeb2}, the slopes of $\partial T_n$ verify $s_n(\xi_-^\gamma,l)\leq -\vert k\vert$ and $s_n(\xi_+^\gamma,l)\geq \vert k\vert$. Thus the conclusion.
    \end{proof}

\section{Proof of Finiteness Conjecture for Anosov Flows}
We can now prove the first main result from the introduction. As it is conceptually much simpler we do this before considering the pseudo-Anosov case:

\begin{proof}[Proof of Theorem \ref{thm: finiteness}] By Theorem \ref{theorem: atoroidal}, the number of tight contact structures up to isotopy on $M$ is finite since $M$ is hyperbolic. 
Hence (for fixed $\gamma$) the possibilities for the pullback $\xi_{\pm}^\gamma$ are finite. These contact structures however encode whether $\gamma$ is periodic. Thus the number of maps

$$\{\text{(unoriented) free homotopy classes } [\gamma^{\pm 1}] \in \pi_1(M)\} \longrightarrow \{ \text{P, NP} \} $$
depending on whether the class is periodic or not -- coded by $P$ or $NP$ -- is finite. More precisely, for some choice of longitudes
$$[\gamma^{\pm 1}] \longmapsto \begin{cases}
    P \ \ \ \  , \  Sl_{\xi_+}(\gamma,l) = Sl_{\xi_-}(\gamma,l) \text{ constant at infinity } \\
    NP \ , \ \text{ otherwise.}
\end{cases}$$
In particular, there are finitely many possibilities for the periodic set
$$\mathcal{P}er(X) = \{ [\gamma] \ | \ \gamma \text{ or } \gamma^{-1} \text{ periodic } \}$$
Since the flow is determined up to isotopy by this set in view of \cite{BFM} (cf.\ Theorem \ref{theorem: BFM}), we deduce that there are finitely many Anosov Flows up to isotopy.
\end{proof}
\begin{remark} Thanks to our covering arguments, cf.\ Remark \ref{rem:Finite_Index}, 
the proof of Theorem \ref{thm: finiteness} only uses the finiteness of orientable tight contact structures. However, the proof of Theorem \ref{theorem: atoroidal} from \cite{CGH}, relying on a normalization process with respect to a triangulation, extends straightforward to the non orientable case, in particular since plane fields are always orientable in restriction to every simplex.
Thus the bound on isotopy orbit equivalence classes of Anosov flows in terms of the number of isotopy classes of universally tight structures remains valid even when the stable and unstable foliations are not orientable, that is when the associated contact pair is not orientable.
    \end{remark}

\section{Anosov Flows on Toroidal Manifolds}
\subsection{Non-Hyperbolic Case: Control on hyperbolic pieces}
We now consider contact structures not only on infinite $\Z$-covers but also on $\Z^2$-covers, that is on thickened tori too.

\subsection*{$\Z^2$-covers} If $A = \Z^2 \subseteq \pi_1(M)$ is non-trivial, we consider the cover given by the subgroup $A$ which is then
$$(T^2\times \R,\xi^A) \cong (\widetilde{M}/ A, \xi^A) \longrightarrow (M,\xi).$$
Where as above $\xi^A$ denotes the pulled back contact structure. Set $M^A = \widetilde{M}/ A$. Of course such covers do not exist if $M$ is atoroidal.
\subsection*{Anosov flows and incompressible tori}
One has a good understanding of (embedded) incompressible tori in manifolds in the sense that they can be put into good position with respect to the flow due to results of Barbot. Namely they can be deformed via a homotopy to either be transverse to the flow or {\em quasi-transverse} so that they are either transverse or tangent to (finitely) many freely homotopic periodic orbits. In the quasi-transverse case, the subset bounded by any two consecutive orbits is a called a {\em Birkhoff annulus}. It is important to remark that such quasi-transverse tori may not be embedded along the periodic orbits.
\begin{lemma}[Corollary 4.24 \cite{BaM}, Th\'{e}or\`{e}me B, \cite{Bar2}]\label{lem:tori_lozenges}
    Let $X$ be an Anosov flow on $M$. Then any (embedded) incompressible torus $T \subseteq M$ can either be homotoped to a union of Birkhoff annuli or else can be made transverse to the flow. In the flow space this corresponds to a sequence of lozenges that is periodic with respect to the action of $A= \pi_1(T)$. If $T$ comes from a scalloped region, then this homotopy can be realised by an isotopy. 
\end{lemma}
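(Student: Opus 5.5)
The plan is to work entirely in the bifoliated plane $\mathcal{P}$ and to use that $M$ is aspherical. Since $T$ is incompressible, $A=\pi_1(T)\cong\Z^2$ injects into $\pi_1(M)$ and acts on $\mathcal{P}$ preserving $\mathcal{F}_s$ and $\mathcal{F}_u$. Any map $T^2\to M$ inducing the same map on $\pi_1$ (up to conjugacy) is homotopic to the inclusion of $T$, because $M$ is a $K(\pi,1)$. So it suffices to build, in each case, a torus with fundamental group $A$ that is either transverse to $X$ or a union of Birkhoff annuli. I would split according to Theorem \ref{thm:dynamics_bi_plane}: either some nontrivial $g\in A$ has a fixed point in $\mathcal{P}$, or every nontrivial element acts freely.

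\textbf{Case (a): some $g\in A$ has a fixed point $p$.}
\begin{enumerate}
\item The stabiliser of a point of $\mathcal{P}$ is cyclic, since it corresponds to a periodic orbit. Hence an element $h\in A$ not in the root of $\langle g\rangle$ moves $p$, and it permutes the fixed points of $g$ because $h$ commutes with $g$.
\item The points $p$ and $h(p)$ project to freely homotopic periodic orbits. By Fenley's theorem, they are joined by a chain of lozenges whose corners are all fixed by $g$.
\item The union of the translates of this chain under $\langle h\rangle$ is an $A$-invariant bi-infinite chain, periodic under $A$.
\item In each lozenge with corners $\alpha,\beta$, I choose a $g$-invariant embedded strip in $\widetilde{M}$ whose interior is transverse to $\widetilde X$ and whose boundary is the lifts of $\alpha$ and $\beta$. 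Doing this $A$-equivariantly and quotienting by $A$ gives a torus made of Birkhoff annuli with $\pi_1=A$.
\end{enumerate}

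\textbf{Case (b): $A$ acts freely on $\mathcal{P}$.}
\begin{enumerate}
\item By Theorem \ref{thm:dynamics_bi_plane}, each element is loxodromic, parabolic or scalloped. I would show that the $A$-action on $\mathcal{P}$ is properly discontinuous, so that $\mathcal{P}/A$ is a torus carrying the two transverse foliations.
\item Using the product structure $\widetilde{M}\cong\mathcal{P}\times\R$, I choose an $A$-equivariant section $\sigma\colon\mathcal{P}\to\widetilde{M}$. Equivariant sections exist because the fibres are contractible lines and $A$ acts freely, so this is an obstruction-theory argument over $\mathcal{P}/A$.
\item The image of $\sigma$ is transverse to $\widetilde X$, and it descends to a torus in $M$ transverse to $X$.
\end{enumerate}

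The main obstacle is the properness in Case (b). A $\Z^2$-action without fixed points need not a priori be properly discontinuous, and accumulation has to be ruled out using the leaf spaces. My first approach would be to use the axes of a loxodromic element in the stable and unstable leaf spaces (as in Lemma \ref{lemma: embedding}) and the fact that commuting elements preserve these axes. This gives a fundamental domain that is a compact bifoliated rectangle. In the scalloped case, I would instead take the $A$-invariant scalloped region directly as the domain.

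For the final clause, when $T$ comes from a scalloped region, the chain of lozenges can be chosen to be the boundary of that region. In that situation I would follow Barbot's argument to show that the constructed torus is embedded and disjoint from its other translates. It then bounds, together with $T$, a product region, so the homotopy upgrades to an isotopy by Waldhausen's theorem for incompressible tori in Haken manifolds.
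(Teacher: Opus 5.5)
Your Case (a) follows the standard route: Fenley's chain of lozenges joining the $g$-fixed points $p$ and $h(p)$, its $\langle h\rangle$-translates, and Birkhoff annuli realising the lozenges. It does give the homotopy to a union of Birkhoff annuli. The paper gives no proof of this lemma and defers to Barbot.

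The genuine gap is Case (b). A free, properly discontinuous $\Z^2$-action on the plane has a closed torus as quotient. So your claim that $A$ acts properly on all of $\mathcal{P}$ is equivalent to saying that $\sigma$ is a global section of $\widetilde{M}\to\mathcal{P}$, i.e.\ that $T$ is a global cross-section and $X$ is a suspension. That is a real theorem, and your sketch does not reach it. Each nontrivial element of $A$ acts freely on the stable and unstable axes, since an element fixing a leaf fixes a point of it. By H\"older's theorem, $A$ then acts on each axis semi-conjugately to a rank-two, hence dense, group of translations. The action on each axis is therefore far from properly discontinuous. Properness on $\mathcal{P}$ could only come from knowing that $\mathcal{P}$ is the product of the two axes with $A$ acting as a lattice, which is exactly the conclusion you want.

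The missing input is the usual one:
\begin{enumerate}
\item Put $T$ in Roussarie--Thurston general position, so that it is transverse to the Reebless foliation $\mathcal{F}^{ws}$.
\item A closed leaf of the induced foliation on $T$ is essential, so it lies in a cylindrical weak stable leaf, and the corresponding element of $A$ is periodic.
\item Hence in your Case (b) the induced foliation has no closed leaves. From there one shows (Barbot, Brunella) that $T$ is isotopic to a transverse torus that is a global section.
\end{enumerate}

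Second, the scalloped case is misplaced. A scalloped region is tiled by an $A$-periodic chain of adjacent lozenges whose corners are fixed by nontrivial elements of $A$; the paper notes that $A$ then has a basis of periodic elements. So it never occurs in Case (b), and a section over all of $\mathcal{P}$ would be the wrong object there anyway. It falls under Case (a), where your construction only produces a Birkhoff torus.

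The final clause, in the form the paper uses later ($A$ is carried, up to isotopy, by an embedded torus transverse to $X$), needs a transverse torus instead. That torus is an $A$-equivariant section over the open scalloped region $S$ itself, on which $A$ acts freely, properly and cocompactly even though it has fixed points on $\partial S$. The real work is showing that its image in $M$ is embedded, which you defer to Barbot. Once that is done, Waldhausen's theorem for homotopic incompressible tori in the irreducible manifold $M$ gives the isotopy directly.

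Aiming instead to prove that the Birkhoff torus is embedded is not viable in general. Birkhoff tori are only weakly embedded along their periodic orbits, as the paper itself remarks.
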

 
\subsection{Normal Form for $\mathcal{F}^u,\mathcal{F}^s$ on $\Z^2$-covers} \ 
 We now analyse more carefully the possibilities for the contact structures on $\Z^2$-covers, by first putting the foliations in nice position relative to the torus slices. To this end, let $M^A$ be a $\Z^2$-cover. We consider the lifts of the weak (un)stable foliation $\mathcal{F}^u_A,\mathcal{F}^s_A$. Those will have finitely many cylindrical leaves $C_i^u, C_i^s$, some of which cross from one end to the other and some of which do not.

Note the sequence of lozenges given by Lemma \ref{lem:tori_lozenges} consists of adjacent pairs and lozenges sharing a corner. These are indicated in Figures \ref{fig:Birkhoff_torus} and \ref{fig:line_lozenges} respectively. Each lozenge then gives an embedded Birkhoff annulus and these annuli glue together to give a torus $T_0$ that is tangent to periodic orbits that alternate orientation, and is otherwise transverse to the flow. One can also arrange, after a small perturbation, that $T_0$ is transverse to, say, the weak unstable foliation. 

 Sliding the Birkhoff annuli along crossing cylinder leaves (red leaves) gives a foliation of the cover $M^{A} = T \times \R$ with $A=\pi_1(T)$. Let $T_t = T \times \{t\}$ denote a $T^2$-fibre at time $t$ with characteristic foliations $\mathcal{F}^u(T_t),\mathcal{F}^s(T_t)$.

We consider the finite set of cylinder leaves of the (lifted) stable and unstable foliations, which we partition into those contained in the negative/positive end and those that cross
$$\mathcal{C}^{u} = \mathcal{C}_{-}^{u} \sqcup \mathcal{C}_0^{u} \sqcup \mathcal{C}_{+}^{u}, \ , \mathcal{C}^{s} = \mathcal{C}_{-}^{s} \sqcup \mathcal{C}_0^{s} \sqcup \mathcal{C}_{+}^{s}.$$
We drop the superscript, to indicate the unions
 $$\mathcal{C}_{0} := \mathcal{C}_0^{u} \cup \mathcal{C}_{0}^{s}, \ , \mathcal{C}_{\pm} := \mathcal{C}_{\pm}^{u} \cup \mathcal{C}_{\pm}^{s}$$
Note that by choosing an orientation for the generator $c \in H_1(T^2)$ corresponding to the periodic orbit we can distinguish whether $C \in \mathcal{C}$ is attracting or repelling depending on the linear holonomy in the direction of $c$.

Also note that by construction $T_0$ intersects only crossing leaves and intersects each $C \in \mathcal{C}_0$ precisely once (transversely). Moreover, as $T_t$ is given by sliding the annuli making up $T_0$ along the crossing leaves of, say, $\mathcal{C}^{u}_0$, this remains true for {\em all} $T_t$. For non-crossing leaves in $\mathcal{C}_{\pm}$ we can arrange via a suitable isotopy that they intersect $T_t$ transversely for all but finitely many $t$ after removing unnecessary tangencies, and we can assume that for all but finitely many $t$ the characteristic foliations $\mathcal{F}^u(T_t),\mathcal{F}^s(T_t)$ are non-singular and Morse-Smale with closed leaves corresponding to intersections with $\mathcal{C}^u$, $\mathcal{C}^s$.
The intersection with a cylindrical leaf that `crosses' $M^A$ is a closed curve that gives closed orbits of the characteristic foliations that are repelling/attracting. 

Such cylinder leaves are for example represented by vertical red lines in Figures \ref{fig:Birkhoff_torus} and \ref{fig:line_lozenges}. Note also that adjacent lozenges correspond precisely to the case when the stable (or unstable) leaf does not cross and in the other case both stable and unstable leaves must cross.

\begin{figure}[h]
    \centering
    \begin{tikzpicture}[scale=0.8,line cap=round,line join=round]


\draw[red] (-1.2,1.8)--(-1.2,-1.8);

\draw[red] (0,1.8)--(0,-1.8);

\draw[red] (1.2,1.8)--(1.2,-1.8);


\draw[dashed] (0,-0.7)--(-1.2,0.7);

\draw[dashed] (1.2,0.7)--(0,-0.7);

\draw[blue] (-2.25,1.8) arc[start angle=180,end angle=360,radius=1.1];

\draw[blue] (0.05,1.8) arc[start angle=180,end angle=360,radius=1.1];

\draw[blue] (-1.1,-1.8) arc[start angle=180,end angle=0,radius=1.1];

\draw[line width=1pt] (-0.3,-0.75) arc[start angle=180,end angle=360,radius=0.3];



\fill (0,-0.7) circle (2pt);
\node[below right] at (0.1,-0.3) {$p$};

\fill (-1.2,0.7) circle (2pt);
\node[below right] at (-1.6,0.7) {$q$};

\fill (1.2,0.7) circle (2pt);

\end{tikzpicture}
    \caption{Adjacent Lozenges: One can foliate $M^A$ by tori given as `push-offs' of consecutive Birkhoff annuli indicated by the dotted arcs joining fixed points $p,q$, smoothing at the corners. These tori intersect the (red) unstable leaves of $p$ and $q$ precisely once.}
    \label{fig:Birkhoff_torus}
\end{figure}


 \begin{figure}[h]
\centering
\scalebox{1.1}{
\begin{tikzpicture}[x=1cm,y=1cm]

\draw[line width=0.4pt] (1.5,1) -- (3.5,3);
\draw[dashed,line width=0.4pt] (1.5,1.5) -- (3.5,3.5);

\fill[gray, opacity=0.25]
    (1.5,1) --
    (3.5,3) --
    (3.5,3.5) --
    (1.5,1.5) --
    cycle;
    
\draw[blue,line width=1pt] (0.5,1.5) -- (2.44,1.5);

\draw[red,line width=0.4pt] (1.5,1) -- (1.5,2.44);

\draw[blue,line width=1pt] (1.56,2.5) -- (3.44,2.5);

\draw[red,line width=0.4pt] (2.5,1.56) -- (2.5,3.44);

\draw[blue,line width=0.4pt] (2.58,3.5) -- (3.5,3.5);

\draw[red,line width=0.4pt] (3.5,2.58) -- (3.5,3.5);


\fill (1.5,1.5) circle (2pt);
\node[above left] at (1.5,1.5) {$q$};

\draw[line width=0.4pt] (1.58,2.5) arc[start angle=0,end angle=360,radius=0.08];

\fill (2.5,2.5) circle (2pt);
\node[above left] at (2.5,2.5) {$p$};

\fill (3.5,3.5) circle (2pt);

\draw[line width=0.4pt] (2.58,1.5) arc[start angle=0,end angle=360,radius=0.08];

\draw[line width=0.4pt] (2.58,3.5) arc[start angle=0,end angle=360,radius=0.08];

\draw[line width=0.4pt] (3.58,2.5) arc[start angle=0,end angle=360,radius=0.08];

\end{tikzpicture}
}
\caption{Consecutive lozenges sharing a corner: Here the Birkhoff Annulus corresponds to a diagonal line and push-off are parallel diagonals. The open dots indicate ideal points on the boundary of the bifoliated plane.}
\label{fig:line_lozenges}
\end{figure}

 In total we thus obtain the following:
\begin{lemma}[Nice Position]\label{lem:nice_position}
    There is a fibration $M^A = T^2 \times \R$ so that each torus $T_t = T^2\times \{t\}$ intersects 
    \begin{itemize}
        \item each crossing $C_i \in \mathcal{C}_0$ leaf in exactly one circle;
         \item each non-crossing leaf $C_i \in \mathcal{C}_\pm$ in at most 2 circles that appear at a birth-death tangency at $t \ne 0$; 
        \item away from the critical levels, the sets (empty or circles) 
$$(\Gamma_i)_t = C_i^u \cap T_t \ , \ (\Gamma^s_j)_t = C_j^s \cap T_t$$
are transverse to the flow within $C_i^u \in \mathcal{C}^{u}$ resp.\ $C_j^s \in \mathcal{C}^{s}$, except $t=0$ when the intersection is a periodic orbit of the flow.
        \end{itemize} 
        Moreover, we can arrange that the characteristic foliations on $T_0= T^2 \times \{0\}$ is Morse-Smale and non-singular with finitely many closed leaves given by the intersections of leaves in $\mathcal{C}_0$.
\end{lemma}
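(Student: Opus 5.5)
We build the fibration directly from the Birkhoff torus supplied by Lemma \ref{lem:tori_lozenges}, and then check the three intersection properties one at a time.

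\textbf{Step 1: the torus $T_0$.} By Lemma \ref{lem:tori_lozenges}, the incompressible torus corresponding to $A=\pi_1(T)$ is homotopic to a union of Birkhoff annuli. In the flow space this is an $A$-periodic chain of lozenges, each pair either adjacent or sharing a corner. We lift to $M^A$. There each lozenge gives an embedded Birkhoff annulus whose boundary consists of two lifted periodic orbits, and these orbits are exactly the intersections with the crossing cylinders in $\mathcal{C}_0$.

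Since $M^A$ is the $\Z^2$-cover, the periodic chain descends to a finite cyclic sequence. The glued annuli therefore form an embedded torus $T_0$, which is tangent to the flow exactly along the orbits $C_i\cap T_0$, $C_i\in\mathcal{C}_0$, and transverse to it elsewhere. A small perturbation, fixed along those orbits, makes $T_0$ transverse to the weak unstable foliation away from them. Along the tangency orbits the characteristic foliations of $\mathcal{F}^u$ and $\mathcal{F}^s$ then have closed leaves, namely the periodic orbits, which are attracting or repelling according to the linear holonomy. Away from them, each annulus is transverse to the flow and crossed monotonically by the leaves, so by genericity it carries no singularities. This gives a non-singular Morse--Smale characteristic foliation on $T_0$.

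\textbf{Step 2: the product structure.} We push $T_0$ off in both directions by sliding each Birkhoff annulus along the crossing unstable cylinders, as in Figures \ref{fig:Birkhoff_torus} and \ref{fig:line_lozenges}. In the flow space this moves the diagonal arcs of each lozenge to parallel push-offs, and lifting to the three-dimensional space gives tori $T_t$. The family $T_t$ foliates a neighbourhood of $T_0$, and its $t$-levels transversally exhaust the region between the two ends of $M^A\cong T^2\times\R$. To show this, I would use the fact that every point of $M^A$ lies on some lifted flow line that meets the saturated family of push-offs; an alternative route is an isotopy argument comparing with the product structure coming from the incompressible torus. Since the sliding happens along leaves of $\mathcal{C}^u_0$, each $T_t$ still meets each $C\in\mathcal{C}_0$ in exactly one circle, and this circle is transverse to the flow for $t\neq 0$.

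\textbf{Step 3: the non-crossing cylinders.} Each $C\in\mathcal{C}_\pm$ is a properly embedded cylinder going out to one end of $M^A$. We restrict the projection $t$ to $C$ and perturb, by an isotopy of the fibration supported away from $\mathcal{C}_0$, so that $t|_C$ is Morse. An annulus going out to a single end can be arranged to have one minimum (or maximum) and no other critical points after cancelling pairs, since it is boundary-parallel to the end. Hence $T_t\cap C$ is empty, then is born at a birth--death tangency, and is thereafter at most two circles, or one essential circle if the cylinder is essential. Genericity ensures transversality to the flow inside $C$ away from finitely many levels, because the flow on $C$ is the geodesic-type foliation asymptotic to the core orbit. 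This critical level can be taken to be $t\neq0$, since $T_0$ meets only crossing leaves.

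I expect the main obstacle to be Step 2: making sure the slid Birkhoff tori really form a global product fibration of $M^A$ and do not accumulate. For this I would exploit the periodicity of the lozenge chain together with the structure of the bifoliated plane (Theorem \ref{thm:dynamics_bi_plane}).
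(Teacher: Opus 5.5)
Your route is the paper's: build the Birkhoff torus $T_0$ from the $A$-periodic lozenge chain, slide it along the crossing unstable cylinders to get the product structure, then place the non-crossing cylinders by an isotopy. The paper also just asserts the global product structure, so your Step 2 is no weaker than the text.

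The genuine problem is Step 3. A cylinder $C\in\mathcal{C}_+$ lies in the positive end, so both of its ends go to $+\infty$ and $t|_C$ is proper and bounded below. A proper Morse function on $S^1\times\R$ whose only critical point is a minimum would make $C$ an increasing union of discs, i.e.\ a plane. Since $\chi(C)=0$, you need at least a minimum and a saddle, so ``one minimum and no other critical points'' is impossible; it would not give two circles in any case.

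Worse, an honestly Morse $t|_C$ gives an interval of levels on which $T_t\cap C$ contains a circle bounding a disc in $C$. Such a circle is never transverse to the non-singular flow on $C$ (Poincar\'e--Hopf on that disc), so the third bullet fails on an open set of $t$. Moreover, $\pi_1(T_t)\to\pi_1(M^A)$ is an isomorphism, so this circle also bounds a disc in $T_t$. Since it is a closed leaf of $\mathcal{F}^s(T_t)$ or $\mathcal{F}^u(T_t)$, the characteristic foliation must be singular throughout that interval. Relatedly, genericity does not give transversality of the level circles to the flow: non-degenerate tangencies between a moving curve and a foliation are stable and persist over open parameter intervals.

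What the statement needs is a Morse--Bott birth. Isotope the fibration near $C$ so that $C$ looks like a U-shaped arc times $S^1_c$. It should be tangent to a single level $T_{t_C}$ along one essential circle, with $T_t\cap C$ otherwise empty or two essential circles transverse to $X$. This is achievable because each component of $C$ minus its core orbit is an open annulus on which the flow is a product: every orbit is asymptotic to the core at one end and leaves every compact set at the other. Hence each component is foliated by essential circles transverse to $X$, and you make the circles $T_t\cap C$ be these.

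Your reason for $t_C\neq 0$, that $T_0$ meets only crossing leaves, is false for adjacent lozenges. At a corner where two lozenges share, say, an unstable side, the stable leaf of that corner orbit is non-crossing. $T_0$ lies locally on one side of it and touches it along the orbit. For those cylinders the birth happens at $t=0$, along the periodic orbit itself (the exceptional case in the third bullet). Likewise, $\mathcal{F}^s(T_0)$ is singular along that orbit rather than having it as a closed leaf, contrary to your Step 1. This is consistent with the paper arranging transversality of $T_0$ only to the weak unstable foliation. The paper's wording is loose at this point too, but you should not build on it.
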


Using the Nice Position of Lemma \ref{lem:nice_position} we thus get the following result:
\begin{lemma}[Giroux Normal Form]\label{lem_Giorux_Normal}
    Let $T \subseteq M$ be an incompressible torus and let $N_T \cong T \times [-1,1]_t$ be a compact neighbourhood with convex boundary. Then the Giroux normal form for both $\xi_\pm$ has constant slope and these slopes agree. 
\end{lemma}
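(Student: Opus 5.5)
We work in the $\Z^2$-cover $M^A = T^2\times\R$ with $A=\pi_1(T)$, using the fibration $(T_t)$ from Lemma \ref{lem:nice_position}. The neighbourhood $N_T$ lifts isomorphically to a compact slab in $M^A$, and after an isotopy we may take it to be $T^2\times[-1,1]$ in the fibration coordinates. The claim is then that both $(\xi_\pm)$ restricted to this slab are non-rotative, with a common slope. We take this common slope to be the direction $c\in H_1(T^2)$ of the periodic orbits, i.e.\ the direction of the circles $C\cap T_t$ for $C\in\mathcal{C}_0$.

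\textbf{Step 1: the slope of each $T_t$.} Fix a generic level $t$. The set $\mathcal{C}_0$ of crossing cylinders is nonempty: the lozenge chain of Lemma \ref{lem:tori_lozenges} always contains crossing unstable leaves, namely the red leaves along which the Birkhoff annuli are slid. Each $C\in\mathcal{C}_0$ meets $T_t$ in a single essential circle of slope $c$. Along this circle the flow $X$ is transverse to $T_t$ (third bullet of Lemma \ref{lem:nice_position}), and $X$ is tangent to both $\xi_+$ and $\xi_-$. Now cut $T_t$ along the circles $C\cap T_t$ and argue as in Lemma \ref{lemma: slope0}. The characteristic foliations $\xi_\pm(T_t)$ are transverse to these circles, and they cross each circle in a consistent direction, because $X$ crosses $T_t$ with a fixed sign along each such circle. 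Take a Legendrian realisation via Giroux flexibility, or simply make $T_t$ convex by a $C^\infty$-small perturbation that keeps this transversality. Then every dividing curve must avoid crossing these circles essentially. Indeed, a dividing curve of any other slope would meet the circle, which is transverse to the characteristic foliation, and therefore has to lie inside a single positive or negative region. By tightness the dividing curves are essential, so they must be parallel to $c$. Since this holds for both $\xi_+$ and $\xi_-$, every convex torus $T_t$ in the slab has dividing slope $c$ for both structures. At $t=0$ the circle $C\cap T_0$ is instead a periodic orbit, i.e.\ a closed leaf of the characteristic foliation. This is consistent with the claim, since that closed leaf itself has slope $c$.

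\textbf{Step 2: passing to the Giroux normal form.} Put $\xi_\pm$ in Giroux normal form on $T^2\times[-1,1]$ using \cite[Proposition 1.8]{Gi2}; this is legitimate by Theorem \ref{thm:Anosov_tight}, since $\xi_\pm$ are universally tight. In the normal form the slope along the family is monotone, and it is constant on convex intervals. The boundary tori are convex with slope $c$ by Step 1. Rotation is possible only on intervals of suspension tori, and any rotation would have to return to slope $c$. We rule this out in two ways. First, monotonicity forbids a net rotation of less than a full turn that comes back to $c$. Second, a full turn would produce Giroux torsion, which is excluded by Theorem \ref{thm:Anosov_tight}; alternatively, a full turn would produce a torus of slope transverse to $c$ isotopic to some $T_t$, contradicting Step 1 after a further convex perturbation. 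This second route uses that slopes of isotopic convex tori in the normal form are compared via the same monotone parametrisation. Hence both structures are non-rotative of slope $c$.

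\textbf{Main obstacle.} The hardest point is Step 1 at non-generic levels and near the birth–death tangencies of the non-crossing cylinders. The transversality of the crossing circles to the characteristic foliation has to survive the convex perturbation, and we must check that the normal-form isotopy of \cite{Gi2} can be chosen compatibly with the fibration $(T_t)$. We would first handle this by applying Step 1 only at a dense set of generic levels and invoking the monotonicity of slopes to interpolate between them.
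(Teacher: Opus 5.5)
\textbf{There is a genuine gap in Step 1.} It is not the genericity issue you flag: the implication itself is false. A closed curve transverse to the characteristic foliation, even one crossed in a consistent direction, places no constraint on the dividing set.

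\begin{itemize}
\item On $T^2=\R^2/\Z^2$, the non-singular Morse--Smale foliation directed by $\partial_x+\epsilon\sin(2\pi y)\partial_y$ is divided by $\{y=\frac14\}\cup\{y=\frac34\}$. Hence it occurs on a convex torus in a universally tight $T^2\times I$. Every vertical circle is transverse to it, always crossed in the $+\partial_x$ direction, and still meets both dividing curves.
\item Alternating crossing directions would not rescue this. The field $\sin(2\pi x)\partial_x-\frac12\cos(2\pi y)\partial_y$ is Morse--Smale with no retrograde connections. The circles $\{y=0\}$ and $\{y=\frac12\}$ are transverse to it and crossed in opposite directions. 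Yet its repelling skeleton (the source, the positive saddle and its stable separatrices) is the circle $\{x=0\}$, so the dividing curves are vertical.
\end{itemize}

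What does pin down $\Gamma$ is a closed leaf of the characteristic foliation, or an attracting/repelling sub-annulus. Leaves cross $\Gamma$ only from $R_+$ to $R_-$, so a closed leaf misses $\Gamma$. On $T_t$ with $t\neq 0$, nothing in your setup provides such a leaf, because the periodic orbits lie on $T_0$.

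This is exactly where your analogy with Lemma \ref{lemma: slope0} breaks down. There, the \emph{solid} torus is cut along the leaf annulus $C_n$ containing $\overline{\gamma}$, so a closed leaf sits on the boundary of each piece. Cutting the $2$-torus $T_t$ along transverse circles gives nothing.

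\textbf{The missing idea} is the paper's three-dimensional version of that cut, carried out for each $t$. A subannulus $A_t\subset T_t$ is closed up into a solid torus using annuli in weak (un)stable leaves that carry a periodic orbit.
\begin{itemize}
\item In the adjacent-lozenge case, this is a piece of a \emph{non-crossing} cylinder meeting $T_t$ in two circles. Your argument never uses these leaves.
\item In the corner-sharing case, it is pieces of crossing cylinders $C_1,C_3$ together with part of $T_0$.
\end{itemize}
On such leaf annuli the characteristic foliation of both $\xi_\pm$ is the flow, so the periodic orbit is an attracting (or repelling) closed leaf. Tightness of the solid torus then forces a sub-annulus of $A_t$ of the opposite type. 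Giroux flexibility rel $\partial A_t$ turns this into a closed leaf of slope $c$ on $T_t$, for $\xi_+$ and $\xi_-$ simultaneously.

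\textbf{Step 2 has an independent gap.} From the boundary slopes alone, a monotone normal form can still rotate by $\pi$ and return to slope $c$. Such a layer is not Giroux torsion, which needs a rotation by $2\pi$. For example, the Stein fillable structure $\ker(\cos 2\pi z\,dx-\sin 2\pi z\,dy)$ on $T^3$ contains such $\pi$-layers. So Theorem \ref{thm:Anosov_tight} does not exclude this rotation. Your fallback route would need Step 1 for arbitrary convex tori isotopic to $T_t$, not just for the fibres $T_t$. Having slope-$c$ closed leaves on every torus of the family is precisely the input needed to rule out this rotation.
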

\begin{proof}
Consider the $\Z^2$-cover given by $A= \pi_1(T)$ and lift $N_T$ (diffeomorphically) to $M^A = T \times \R$. It will suffice to show that the slopes on {\em all} tori $T_t$ are equal. To this end we consider two cases depending on whether there are adjacent lozenges coming from the action of $A$ according to Lemma \ref{lem:tori_lozenges}.

\medskip

\textbf{Case 1: Adjacent Lozenges:} First we then find a subannulus of $A_t\subseteq T_t$ that is isotopic relative to its boundary to an annulus $A^C_t \subset C \in \mathcal{C}$ contained in some cylinder leaf so that it contains the periodic orbit of $C$ and whose boundary is transverse to the flow. Such an annulus $A_t$ is indicated for example by the black semi-circle in Figure \ref{fig:Birkhoff_torus} and can always be found for any $t<0$ if we oriented things so that the torus moves vertically as $t$ increases. For simplicity assume that $C\in \mathcal{C}^{s}$ is stable (otherwise swap the orientation of the flow). After smoothing corners the torus $T'_t = A_t \cup A^C_t$ bounds a solid torus and still contains $A^C_{t}$. 

Now, since the contact structures $\xi_\pm^{A}$ are tangent to the flow and transverse to all leaves of $\mathcal{F}^{u/s}$ we deduce that $\xi_\pm^A(T'_t)$ contains $A_t^C$ as an attracting sub-annulus containing a unique periodic orbit. Hence $A^t$ is {\it repelling} and thus after applying Giroux's elimination procedure/flexibility Lemma \cite{Gi1} to modify it via an isotopy relative to $\partial A_t$, we can assume that $\xi_{\pm}(A_t)$ is non-singular and  contains (at least) one repelling orbit for all $t < 0$. If $C\in \mathcal{C}^{u}$ instead, then we would have an attracting orbit. In any case the Giroux Normal form of $N_T$ must have constant slope for all $t <0$ which agrees for $\xi_+$ and $\xi_-$.

For $t > 0$ the argument is similar, but here we find two disjoint {\it attracting} sub-annuli in the (upper) half space above the stable leaves at the top of the picture. Thus in between there must be a {\it repelling} annulus which  fixes the slope of the Giroux normal form. 

Finally, on $T_0$ the characteristic foliation induced by $\mathcal{F}^{u}$ is stable so that we obtain a repelling orbit by taking a sufficiently small perturbation.

\medskip

\textbf{Case 2: No Adjacent Lozenges:} In this case we find annuli $A_t$ parallel to two consecutive Birkhoff annuli that intersect 3 consecutive crossing cylinders $C_1,C_2,C_3$ that are (cyclically) ordered. Such an annulus is depicted by the solid diagonal in Figure \ref{fig:line_lozenges}, the dotted diagonal corresponds to the union of two Birkhoff annuli denoted $A_0$ and the $C_i$ are the vertical (red) lines. For simplicity assume that $C_2$ has repelling holonomy (so that $C_1,C_3$ are attracting). Then consider the (piecewise smooth) solid torus bounded by $A_0 \cup A_t$ and subsets of $A_1 \subset C_1, A_3 \subset C_3 $ indicated by the shading in Figure \ref{fig:line_lozenges}.

Again, after smoothing corners, we find an attracting (proper) subannulus $A_0' \subseteq A_0$ so that there is an attracting region in $A_t$ for all $t \ne 0$. Hence again the Giroux Normal Forms of $\xi_\pm$ can be made to have an attracting orbit in $A_t$ for any $t\ne 0$ -- the case $t=0$ is trivial --- and hence these have constant slope that agree for both contact structures.
\end{proof}
In fact, the proof above shows that there are always {\em at least} as many closed orbits (hence dividing curves) in the Giroux normal form as there are lozenges  -- or equivalently Birkhoff annuli -- making up $T_0$ and we deduce the following:
\begin{lemma}[Slope Data]\label{lem:Slope_Data}
    Let $T$ be any embedded incompressible torus in $M$ which is convex for $\xi_+,\xi_-$ and has a minimal number of dividing curves in its (smooth) isotopy class 
    $$m^T_\pm = \min\{\#\text{Dividing curves of } \xi_{\pm}(T') \ | \ T' \cong T \text{
   convex}  \}.$$ Then $m^T_+ = m^T_-$ and agrees with the number of closed leaves on a (quasi)-transverse torus. In particular, both the slope and the number of closed leaves are determined by {\bf either} $\xi_+$ or $\xi_-$.
\end{lemma}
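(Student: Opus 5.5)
The plan is to work in the $\Z^2$-cover $M^A = T\times\R$ with $A=\pi_1(T)$, where a minimising torus $T'$ lifts diffeomorphically to a torus isotopic to the fibres $T_t$. We then compare two bounds on the number of dividing curves: a lower bound coming from the Birkhoff annuli of Lemma \ref{lem:tori_lozenges}, and an upper bound realised by a convex perturbation of a torus in nice position (Lemma \ref{lem:nice_position}). Let $n$ be the number of lozenges in the $A$-periodic chain modulo $A$, equivalently the number of Birkhoff annuli making up the quasi-transverse torus $T_0$, equivalently the number of closed leaves of the Morse--Smale characteristic foliation on $T_0$. In the transverse case $n$ is instead the number of closed leaves of $\mathcal{F}^u(T_0)$. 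The goal is to show $m^T_+ = m^T_- = n$.

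\textbf{Upper bound.} On $T_0$ the characteristic foliations of $\xi_\pm$ are, after a small perturbation, non-singular and Morse--Smale. Their closed leaves are exactly the circles $C\cap T_0$ for $C\in\mathcal{C}_0$, with attracting and repelling leaves alternating; this alternation is forced by the alternating orientation of the periodic orbits of consecutive Birkhoff annuli. Such a torus is convex, and its dividing set has one curve between each consecutive pair of closed leaves, so it has exactly $n$ curves for both $\xi_+$ and $\xi_-$. This gives $m^T_\pm\le n$. All these dividing curves share the common slope of the periodic orbit class.

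\textbf{Lower bound.} Take any convex torus $T'$ isotopic to $T$ and push it, inside a compact thickened region $T\times[-C,C]\subset M^A$, into a Giroux normal form whose levels include $T'$ up to isotopy. By Lemma \ref{lem_Giorux_Normal} this region is non-rotative for both $\xi_\pm$ with the common slope, so the number of dividing curves can only be compared, not rotated away. Next we use the observation recorded just before the statement: the proof of Lemma \ref{lem_Giorux_Normal} in fact produces, on every level $T_t$, at least one attracting or repelling sub-annulus for each lozenge. In Case 1 these are the annuli $A_t$ isotopic to pieces of cylinder leaves; in Case 2 they are the shaded solid tori bounded by $A_0\cup A_t$. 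These sub-annuli are disjoint and essential, and they alternate in sign around the torus. By Giroux flexibility each sign change forces a dividing curve, so every convex torus in the normal-form family has at least $n$ dividing curves.

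The step I expect to be the main obstacle is transferring this bound from the normal-form levels to an arbitrary isotopic convex $T'$. The sub-annuli are certified by cylinder leaves $C\in\mathcal{C}$, which are invariant objects of the flow. My plan is to argue via bypasses: in a non-rotative tight $T^2\times I$, reducing the number of dividing curves requires a bypass. A bypass attached along a Legendrian arc crossing one of the certified annuli would produce, in the solid torus bounded by $T'_t=A_t\cup A^C_t$, a dividing set incompatible with the forced attracting annulus $A^C_t$. That would contradict the tightness argument already used in Lemma \ref{lemma: slope0}. If this bypass argument proves delicate, the fallback is to use the rigid invariant description directly: the attracting or repelling annuli persist under $C^0$-small isotopies transverse to the flow, and every isotopy of $T$ in $M^A$ can be decomposed into such moves together with slides along crossing leaves.

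Combining the two bounds gives $m^T_+=m^T_-=n$, and the common slope is the slope of the periodic orbits. Since $n$ and the slope are each computed from $\xi_+$ alone and from $\xi_-$ alone, the final clause of the statement follows.
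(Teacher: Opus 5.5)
Your two-bound strategy is the paper's. The upper bound, perturbing the quasi-transverse torus $T_0$ so that $\xi_\pm(T_0)$ is non-singular Morse--Smale with exactly $n$ closed leaves, is exactly what the paper does. The lower bound is meant to come from the annuli built in the proof of Lemma~\ref{lem_Giorux_Normal}, but you misdescribe what that construction produces, and as stated your lower-bound step does not go through. At a fixed level $t\neq 0$ you do \emph{not} get one certified annulus for every lozenge, with alternating signs:
\begin{itemize}
\item In Case~1, the solid torus bounded by $A_t\cup A^C_t$ exists only at the corner orbits on the side towards which $T_t$ has been pushed. In Figure~\ref{fig:Birkhoff_torus} these are the lower corners such as $p$ for $t<0$, and the upper ones for $t>0$.
\item In Case~2, each $A_t$ runs parallel to \emph{two} consecutive Birkhoff annuli, across three crossing cylinders $C_1,C_2,C_3$. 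Annuli taken ``one per lozenge'' would therefore overlap rather than be disjoint.
\end{itemize}
What the construction actually gives is one annulus per pair of consecutive lozenges, and at a given level these all have the \emph{same} sign (all repelling or all attracting). The paper therefore chooses the pairs disjoint except at corners, which gives $n/2$ disjoint essential annuli of one sign. Two disjoint repelling (resp.\ attracting) annuli on a convex torus must be separated by an attracting (resp.\ repelling) region, so each certified annulus accounts for two dividing curves, hence at least $n$ in total. The alternation you assert is the outcome of this count, not something Lemma~\ref{lem_Giorux_Normal} hands you directly.

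On the step you single out as the main obstacle: the paper does not treat it separately. It reads the minimal number directly off the levels of the Giroux normal form of a neighbourhood with convex boundary, as set up in Lemma~\ref{lem_Giorux_Normal}: at least $n$ dividing curves on every level, and exactly $n$ on the perturbed $T_0$. Your bypass sketch is thus not part of the paper's route. In its present form it also does not connect, because the certified annuli live on levels of the flow-adapted fibration of Lemma~\ref{lem:nice_position}, not on an arbitrary convex $T'$. Your fallback would not work as stated either. The certificate, namely the direction of the characteristic foliation along $\partial A_t$, is a $C^1$-open condition that $C^0$-small isotopies can destroy. Moreover, isotopies between convex tori change the dividing set precisely through bypass attachments, so there is no free decomposition into count-preserving moves.
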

\begin{proof}
    We order the lozenges $L_1, \cdots ,L_{2k}$ that correspond to $T$ so that (cyclically) consecutive lozenges share corners. For each pair of consecutive lozenges the proof of Lemma \ref{lem_Giorux_Normal} yields a repelling or attracting annulus for the Giroux Normal Form on each $T_t$. Taking these pairs of lozenges to be disjoint except possibly at the corners, we then obtain a {\em disjoint} collection of annuli in $T_t$ that are either {\em all repelling} or else {\em all attracting}. It follows that the Normal Form has at least $2k$ dividing curves for all $t$. 

    Note further that on $T_0$ the characteristic foliation of $\mathcal{F}^{u/s}$ is already Morse-Smale with exactly $2k$ compact leaves and hence $\xi_\pm$, which can be assumed to be a small perturbation, have the same property. It follows that the Giroux Normal Form has a torus with precisely $2k$ dividing curves proving the claim.
\end{proof}
We will also need the following important observation, which follows from the considerations above:
\begin{lemma}\label{lem_cylinder_periodic}
   An element $\gamma \in A \subseteq \pi_1(M)$ in a rank 2 abelian subgroup is periodic if and only if it comes from the slope of the contact structure on some (possibly different) $A'$-cover with $\gamma \in A' \cong \Z^2$.
\end{lemma}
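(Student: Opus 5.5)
The plan is to prove the two implications separately, using the Nice Position of Lemma \ref{lem:nice_position} and the Slope Data of Lemma \ref{lem:Slope_Data}. Here we read ``comes from the slope'' as meaning that $\gamma$ (up to sign and taking powers, cf.\ Remark \ref{rem:Finite_Index}) is the primitive class in $A'$ parallel to the common slope of the dividing curves of $\xi^{A'}_\pm$ on the tori of a Giroux normal form in $M^{A'}=T^2\times\R$.

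\emph{Periodic $\Rightarrow$ slope.} Suppose $\gamma\in A$ is periodic. Then $\gamma$ fixes a point of $\mathcal{P}$, and so it fixes a lozenge corner. By Lemma \ref{lem:tori_lozenges}, a periodic element of a $\Z^2$ subgroup lies in a chain of lozenges invariant under some $A'\cong\Z^2$ containing $\gamma$. This gives a quasi-transverse torus $T_0\subset M^{A'}$ made of Birkhoff annuli, whose tangent periodic orbits are all freely homotopic to $\gamma^{\pm1}$. Put the fibration of $M^{A'}$ in the Nice Position of Lemma \ref{lem:nice_position}. The closed leaves of the characteristic foliation on $T_0$ are the intersections with the crossing cylinders $\mathcal{C}_0$, hence parallel to the periodic orbits, i.e.\ to $\gamma$. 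The proof of Lemma \ref{lem_Giorux_Normal} then shows that both $\xi_\pm^{A'}$ have constant slope on all $T_t$, and this slope is that of these closed leaves. So $\gamma$ is the slope class.

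\emph{Slope $\Rightarrow$ periodic.} Suppose $\gamma$ is the slope class of $\xi^{A'}$ on some $A'$-cover. The step I expect to be hardest is to rule out that $A'$ acts on $\mathcal{P}$ without any periodic element, or that $A'$ contains periodic elements only in a different direction. For the first point, a $\Z^2$ subgroup always has a periodic element: in the transverse case one uses Barbot's results, and otherwise one argues via scalloped regions or lozenge chains (Lemma \ref{lem:tori_lozenges}). So there is a periodic primitive $\delta\in A'$. Applying the first implication to $\delta$ and $A'$, the Giroux normal form of $\xi^{A'}_\pm$ has constant slope equal to $\delta$ on every torus $T_t$. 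Since the asymptotic slope is monotone and is constant here, the slope is uniquely determined by $A'$. Hence $\gamma=\delta^{\pm1}$ up to powers, and $\gamma$ is periodic.

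The delicate case is a transverse torus, for which no Birkhoff annuli exist. There I would argue that $\xi_\pm$ restricted to a torus transverse to the flow has a suspension-like characteristic foliation twisting in opposite senses for $\xi_+$ and $\xi_-$, by the same sector argument as in Lemma \ref{lem:R_covered_periodic}. Such tori then do not yield a common constant slope, so no class ``comes from the slope'' unless a quasi-transverse representative exists. This case analysis is the point I would check most carefully.
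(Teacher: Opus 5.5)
Your forward direction is the paper's argument: the lozenge chain through $\mathrm{Fix}(\gamma)$ gives a Birkhoff torus, and the proof of Lemma~\ref{lem_Giorux_Normal} identifies the slope with the orbit direction. You prove it only for ``some $A'$'' but then apply it to the given $A'$ in the converse. That step is harmless, because any $A'\ni\delta$ commutes with $\delta$ and so preserves the chain of lozenges through $\mathrm{Fix}(\delta)$, but you should say so.

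The genuine gap is in the converse. It rests on the claim that $\xi^{A'}_\pm$ have a single constant slope on all of $M^{A'}$, so that the slope is ``uniquely determined by $A'$''. This fails in exactly the case the paper flags after the statement. If $A'$ preserves a scalloped region, then $A'$ has a basis $\delta_1,\delta_2$ of periodic elements. Each $\mathrm{Fix}(\delta_i)$ spans its own $A'$-invariant chain of adjacent lozenges, and hence gives its own Birkhoff torus in $M^{A'}$. Applying your argument to $\delta_1$ and to $\delta_2$ with the same $A'$ would produce two different constant slopes on the same cover, which is absurd.

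What actually happens there is the following. $A'$ is also carried by a transverse torus on which $\mathcal{F}^s$ and $\mathcal{F}^u$ have closed leaves in the two different periodic directions. Pushing this torus along the flow, the characteristic foliations of $\xi_+$ and $\xi_-$ sweep, in opposite senses, from one direction to the other, so both structures are rotative on $M^{A'}$. Your final paragraph does not cover this, because here transverse and quasi-transverse representatives coexist for the same $A'$.

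So the converse has to be a local statement. You need to show that a slope common to $\xi_+$ and $\xi_-$ on a non-rotative neighbourhood of some torus carrying $A'$ is the orbit direction of a Birkhoff torus there. This, rather than a global uniqueness, is what ``possibly different $A'$'' and the scalloped remark in the paper point to.

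Global constancy for \emph{both} structures fails even away from scalloped regions. Take the geodesic flow of a hyperbolic surface and $A'=\pi_1(T^1c)$, with $c$ a simple closed geodesic. Let $s$ be the signed distance to $c$, $u$ the arclength along it, and $\theta$ the fibre angle. The Levi-Civita structure lifts to $\ker(d\theta+\sinh s\,du)$ on $T^1(\mathbb{H}/\langle\gamma\rangle)$ (up to sign conventions). Its characteristic foliation on $\{s=s_0\}$ is linear of slope $-\sinh s_0$, so it realises every non-fibre slope, while the other structure has constant slope $\bar\gamma$. Only the slope common to $\xi_+$ and $\xi_-$ picks out the periodic class, and your reading of ``comes from the slope'' should be adjusted accordingly.

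Finally, your claim that every $\Z^2$ contains a periodic element is false for suspension flows, where the fibre subgroup has none. There the converse holds only because the slopes rotate towards irrational limits, and this case needs to be treated separately.
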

The subgroups $A$ and $A'$ can be chosen to agree unless we are in the situation that the group (virtually) fixes a scalloped region in which case there may be more than one choice coming from the different Birkhoff tori that arise \cite[Section 4]{BaM}.

\subsection*{Conclusion of proof of Theorem \ref{thm: homotopy}}
We now use the rigidity for contact structures arising from the Anosov flow on neighbourhoods of incompressible tori to extend the characterisation of periodic orbit data of an Anosov flow via its bi-contact structure to the non-hyperbolic case.
\begin{proof}[Proof of Theorem \ref{thm: homotopy}: Toroidal case] If $\gamma$ is periodic, the argument of Lemma \ref{lemma: slope0} holds without any assumption on $M$. Thus we deduce that the slopes $Sl_{\xi_\pm}(\gamma)$ are both equal and constant at infinity. 
 
 Thus, we can certify non-periodicity if the slopes $Sl_{\xi_\pm}(\gamma)$ are either not both equal or not both constant at infinity. In the non-periodic case we still have to exclude the case of asymptotically stable equal slopes. To this end we have to consider the loxodromic and parabolic cases (ii)/(ii)' and scalloped case (iii) of Theorem \ref{thm:dynamics_bi_plane}. In the loxodromic/parabolic case the quotient of the bifoliated plane is a cylinder and we can again argue precisely as in the hyperbolic case. Note that the remaining case can occur only if the flow is non-$\R$-covered.
 
 We now assume we are in the scalloped case (iii). Then $\gamma$ (virtually) lies in some $\Z^2 \cong A \subseteq \pi_1(M)$, note that it is possible that $\gamma$ lies in more than one abelian subgroup of rank $2$. According to Lemma \ref{lem_cylinder_periodic} that $\gamma \in \Z^2$ is periodic if and only if it comes from the slope of the contact structure on some $\Z^2$-cover. Hence the contact structure determines whether a class is periodic in this case too.
 
 Moreover, some elements in $A$ {\em are}  periodic, since they preserve some lozenges. Note that it can happen that there is a basis of $A$ consisting of periodic elements, which is precisely the case if one preserves a scalloped region by \cite{Bar2}. In particular, by \cite{Bar2} (cf.\ \cite[Lemma 2.20]{BFeM}) the subgroup $A$ comes from an embedded torus that is transverse to the flow (up to isotopy).
 
We put the contact structure into Giroux Normal Form on $M^{A} \cong T^2 \times \R$. By Lemma \ref{lem_Giorux_Normal} this Giroux normal form must have constant slope and these slopes then correspond precisely to the (unoriented) classes represented by periodic orbits. Thus we have that $\gamma$ is periodic if it corresponds to one of these slopes and non-periodic otherwise. In total the pair of contact structures then determines the periodic set just as in the hyperbolic case. 
\end{proof}
\subsection{Proof of Theorem \ref{thm: finiteness_JSJ}}
\begin{proof} Now since the bi-contact structures associated to the Anosov flow are universally tight and have vanishing Giroux torsion (cf. \cite{BBM}) there are finitely many up to contactomorphism by \cite[Th\'{e}or\`{e}me 6]{CGH}, see also Theorem \ref{theorem: toroidal}. More precisely, there are finitely many tight contact structures up to isotopy and {\em Lutz} modifications, which are in turn given by (compositions) of Dehn twists along embedded incompressible tori (not necessarly pairwise disjoint). 

We may therefore assume (up to finite ambiguity) that for the given flows $X,X'$ we have $\xi_+ = \xi'_+$. We may also assume that $\xi_-'$ is obtained from $\xi_-$ via a diffeomorphism $h$ that is a composition of Dehn twists on some finite collection $T_1, \cdots ,T_N$ of embedded tori and isotopy.

We lift these tori to the cyclic cover $M^\gamma$. If $\gamma \in \pi_1(M_{hyp})$ is in a hyperbolic piece, then $h_*\gamma =\gamma$ and $h$ lifts to a 
 diffeomorphism $h^\gamma$ of $M^\gamma$ that takes $\xi_-^\gamma$ to $(\xi'_-)^\gamma$.
When $\gamma$ is moreover not homotopic to the boundary, then all these tori lift as planes in $M^\gamma$ that can be assumed disjoint from a core of $M^\gamma$. In particular $h^\gamma$ is (smoothly) isotopic to the identity so that $\xi_-^\gamma$ and $(\xi'_-)^\gamma$ have the same slope at infinity.  We deduce that whenever $\xi_+=\xi_+'$, then an element $\gamma \in \pi_1(M_{hyp})$ that do not lie in the boundary (up to homotopy) is an orbit of $X$ if and only if it is an orbit of $X'$.

If $\gamma \in \pi_1(\partial M_{hyp})$ then the slope on the boundary component is unchanged via Dehn twists and determines whether the orbit is periodic or not by Lemma \ref{lem_cylinder_periodic}.
\end{proof}

\subsection{Pseudo-Anosov Flows}
 In order to prove the finiteness of pseudo-Anosov flows on a closed hyperbolic $3$-manifold up to isotopy orbit equivalence, one can restrict to the case of pseudo-Anosov flows that share the same singular periodic orbits since Li \cite{Li} showed that the possibilities for singular orbits are finite (up to isotopy). In particular, this means that the Finiteness Conjecture on closed hyperbolic $3$-manifolds reduces to the case of certain link complements.

Our methods can readily be applied, with some slight modifications, to this situation as well, in the case that the drilled manifold obtained by deleting the singular orbits is again atoroidal. 
\begin{proof}[Proof of Theorem \ref{thm:finiteness_pA_case}]
    After taking some (branched) cover of bounded degree over the singular locus, we can assume that all singular orbits have an even number of prongs and that the flow preserves these. Since the number of such covers (up to equivalence) is finite and since  determining the (non-singular) periodic orbit data on a cover is equivalent to determining it on the original manifold (Remark \ref{rem:Finite_Index}), we assume without loss of generality that the weak (unstable) foliations, with singular orbits removed, are orientable. 
    
    Let $M_{reg}$ be given by removing (neighbourhoods) of the singular orbits. On $M_{reg}$ we then have a bi-contact structure tangent to the flow. This is then universally tight as the universal cover is contactomorphic to a subset of $(U^*\mathbb{H},\xi_{can})$ exactly as in the non-singular case (see Section \ref{subsection:bifol_cone}). Moreover, there is no Giroux torsion as all contact structures are non-rotative when lifted to $\Z^2$-covers (cf. Lemma \ref{lem_Giorux_Normal}).
    
    By choosing boundary parallel tori with minimal dividing set, we obtain contact structures with convex boundary whose dividing set are given by the prongs of the singular orbits, which are then {\em a priori} fixed. Moreover, the contact structure on an end is then, up to isotopy given by a (half-infinite) product.
    
    Assuming that $M_{reg}$ is atoroidal there are then finitely many universally tight contact structures (without torsion) up to performing Lutz modifications along boundary parallel tori by \cite[Th\'eor\`eme 8]{CGH}. Note that (lifts) of these to $\Z$-covers $M_{reg}^{\gamma} \cong S^1 \times \R^2$ does not affect slopes since the support can be assumed to be disjoint from the core circle.
    
    
    

      Now the argument to characterise {\em non-singular} periodic orbits in Theorem \ref{thm: homotopy} works {\em verbatim}. We shall however do this on $M_{reg}$, noting that a class in $\pi_1(M)$ represents a periodic orbit if and only if some lift to $\pi_1(M_{reg}) \to \pi_1(M)$ does.

For non-periodic classes we used the classification of elements acting freely on the bifoliated plane $\mathcal{P}$ associated to the flow as described in \cite{BFM} (cf.\ Theorem \ref{thm:dynamics_bi_plane}). This also holds for pseudo-Anosov flows. We can drill out the singular orbits and we get an action on $\mathcal{P}_{reg}=\mathcal{P}\setminus \{\text{singular orbits}\}$. Taking the universal cover of $\mathcal{P}_{reg}$ gives what might be called a {\em generalised loom space} $\widetilde{\mathcal{P}}_{reg}$, which is a bi-foliated plane on which $\pi_1(M_{reg})$ acts.

Then any element in $\pi_1(M_{reg})$ that projects to a non-periodic element in $\pi_1(M)$ acts as a translation on $\widetilde{\mathcal{P}}_{reg}$ too. We can argue exactly as in the closed case here as well. Note that there is one further case of elements that act freely on $\widetilde{\mathcal{P}}_{reg}$, but whose projections have fixed point.  These elements correspond precisely to singular orbits and act freely on $\widetilde{\mathcal{P}}_{reg}$, but not necessarily on the leaf spaces. However since this data is already known to be finite this is not a problem.

    In total we deduce that the possible periodic orbit data on $M$ is finite. We then apply \cite{BFM} to deduce that there are finitely many orbit equivalence classes of such flows.
\end{proof}

\section{Outlook: Further Questions}\label{section: questions}

    We conclude with some questions that arise naturally in this setting.

\begin{question}[Slopes and Quasimorphisms] We have seen in Remark \ref{remark: relative} that the relative slope map verifies:
$$Sl_{\xi_1} (\gamma^n,\xi_2)=\vert n\vert Sl_{\xi_1} (\gamma,\xi_2).$$
What are the further properties? Does it define a pseudo-distance on $\pi_1(M)$? Are there circumstances where we can get rid of the absolute values and obtain a quasi-morphism on $\pi_1(M)$? It seems to be the case in the $\R$-covered situation. Is this map related to representations of $\pi_1(M)$ in $Homeo (\R)$?
\end{question}

The slope map only uses a small amount of the information given by the pair $(M^\gamma, \xi^\gamma)$. There is also the whole Giroux normal form \cite{Gi2} that could be used, or the contact homologies (cylindrical or embedded), obtained as direct limits of contact homologies of exhaustions by contact sutured tori.
\begin{question}[Detecting Lozenges via bi-contact structure] In the periodic case, does the number of dividing curves (for an exhaustion of $M^\gamma$) at infinity recover the number of lozenges?
\end{question}
Our techniques already yield that the number of dividing curves is at least the number of lozenges. This would then give a characterisation of the skew property purely in terms of the contact structures.
\begin{question}[Completions of $M^\gamma$]  Can one construct a boundary at infinity, endowed with some characteristic foliation extra data as a completion or compactification of $(M^\gamma, \xi^\gamma_\pm)$?
\end{question}
Finally one would like to extend our results to prove the Finiteness Conjecture in full generality.
    \begin{problem}[General pseudo-Anosov Flows]
        Prove the Finiteness Conjecture for pseudo-Anosov flows using contact geometry for closed hyperbolic manifolds. Extend to the toroidal case.
    \end{problem}

\end{document}